\documentclass{article}

\usepackage{amssymb, latexsym,pdfsync,amsmath,amsthm, ulem,xcolor}
\usepackage{comment}

  \usepackage[bookmarks=false]{hyperref}

\newtheorem{theorem}{Theorem}[section]
\newtheorem{corollary}[theorem]{Corollary}
\newtheorem{lemma}[theorem]{Lemma}
\newtheorem{proposition}[theorem]{Proposition}

\theoremstyle{definition}
\newtheorem{definition}[theorem]{Definition}
\newtheorem{remark}[theorem]{Remark}

\newtheorem{result}[theorem]{Result}

\newcommand{\FF}{\mathbb{F}}

\newcommand{\Fq}{\mathbb{F}_q}
\newcommand{\fq}{\mathbb{F}_q}
\newcommand{\Fqn}{\mathbb{F}_{q^n}}
\newcommand{\Fqm}{\mathbb{F}_{q^m}}

\newcommand{\Fqt}{\mathbb{F}_{q^2}}

\newcommand{\D}{\mathcal D}

% Definitions

\def\F{\mathbb{F}}
\def\Fq{{\mathbb{F}}_q}

\def\PG{\mathrm{PG}}
\def\V{\mathrm{V}}
\def\GL{\mathrm{GL}}

\def\dim{\mathrm{dim}}

\def\tr{\mathrm{tr}}

\def\trk{\mathrm{trk}}
\def\mrk{\mathrm{mrk}}

\def\cB{\mathcal B}
\def\cD{\mathcal D}

\def\cH{\mathcal H}

\def\cL{\mathcal L}

\def\cO{\mathcal O}

\def\cT{\mathcal T}

\def\cS{\mathcal S}

\def\cQ{\mathcal Q}
\def\cZ{\mathcal Z}

\def\tS{\Tilde{S}}
\def\tcS{\Tilde{\cS}}

\newcommand{\npmatrix}[1]{\left[ \begin{matrix} #1 \end{matrix} \right]}

\newcommand{\rank}{\mathrm{rank}}

\setlength{\parindent}{0pt}
\setlength{\parskip}{1ex plus 0.5ex minus 0.2ex}

%\titleformat{\paragraph}
%{\normalfont\normalsize\bfseries}{\theparagraph}{1em}{}
%\titlespacing*{\paragraph}
%{0pt}{3.25ex plus 1ex minus .2ex}{1.5ex plus .2ex}

\def\titlerunning#1{\gdef\titrun{#1}}
\makeatletter
\def\author#1{\gdef\autrun{\def\and{\unskip, }#1}\gdef\@author{#1}}

\makeatother

\def\MSC#1{{\renewcommand{\thefootnote}{}%
\footnote{\emph{Mathematics Subject Classification (2010):} #1}}}
\def\keywords#1{\par\medskip
\noindent\textbf{Keywords:} #1}

\usepackage{authblk}

\frenchspacing

\textwidth=16cm
\textheight=23cm
\parindent=16pt
\oddsidemargin=0cm
\evensidemargin=0cm
\topmargin=-0.5cm

\begin{document}

\baselineskip=16pt

\titlerunning{}
\title{On the geometry of tensor products over finite fields}
\author[1]{Stefano Lia}
\author[2]{John Sheekey}
\affil[1,2]{School of Mathematics and Statistics, University College Dublin, Dublin. E-mail: stefano.lia@ucd.ie, john.sheekey@ucd.ie}
%\affil[2]{School of Mathematics and Statistics, University College Dublin, Dublin e-mail: john.sheekey@ucd.ie}

\date{%\today
}
\maketitle

\bigskip

\MSC{Primary 51E12, %: Generalized quadrangles, generalized polygons
12K10, %: Semifields
47A80, %: Tensor products of operators (take into account that 47Axx: General theory of linear operators)
; Secondary
51E20, %: Combinatorial structures in finite projective spaces
51A50%: Polar geometry, symplectic spaces, orthogonal spaces
}

\begin{abstract}
In this paper we study finite dimensional algebras, in particular finite semifields, through their correspondence with nonsingular threefold tensors. We introduce a alternative embedding of the tensor product space into a projective space. This model allows us to understand tensors and their contractions in a new geometric way, relating the contraction of a tensor with a natural subspace of a subgeometry. This leads us to new results on invariants and classifications of tensors and algebras and on nonsingular fourfold tensors. A detailed study of the geometry of this setup for the case of the threefold tensor power of a vector space of dimension two over a finite field surprisingly leads to a new construction of quasi-Hermitian varieties in $\PG(3,q^2)$.
\keywords{Finite semifields, tensor representation of semifields, Quasi-Hermitian surface, two-character set.}
\end{abstract}

\section{Introduction}

In this paper we investigate the geometry of finite semifields of dimension $n$ over $\fq$ via their representation as threefold tensors of format $n\times n \times n$ and embedding of such tensors in projective spaces. While in the literature most work is focused on the standard embedding in the space $\PG(n^3-1,\fq)$, we work instead on the cyclic model, namely embedding $\fq^n\otimes\fq^n\otimes\fq^n$ in the space $\PG(n^2-1,q^n)$. This has some drawbacks, but we will show that a lot of interesting geometry arises. In particular, we are able to deal with a tensor and (one of) its contractions in the same space, see Section \ref{sec: contractions in the cyclic model}.
We study the case $n=2$ in greater detail. In this case we recover a new geometrical proof of Dickson classification of semifields of dimension two over their center \cite{dickson}. Furthermore, the nonsingularity of tensors in the cyclic model naturally provides a link with a pair of commuting polarities. The investigation of such polarities lead us to the construction of new quasi-Hermitian surfaces.

The paper is organised as follows: in Sections \ref{sec:one- and twofold} and \ref{sec:threefold tensors and algebras} we describe the standard and cyclic model for one-two and three-fold tensors. In Sections \ref{sec:contractions and nonsingularity} we define tensor contractions and in \ref{sec: contractions in the cyclic model} we illustrate their geometrical interpretation in the cyclic model. In Sections \ref{sec:groupaction}, and \ref{symmetricaction} we illustrate the group action relative to threefold tensors and in \ref{sec:thirdgl} we show how to interpret this in the cyclic model.
In Section \ref{sec:bound Bel rank} we exploit the cyclic model to obtain a bound on the BEL rank of a generic semifield.
In Section \ref{section nonsingular tensors geometry} we investigate more closely the case $n=2$. In this case we provide a detailed analysis of the geometry of certain quadrics and hermitian surfaces associated to the nonsingularity of tensors. This allows us to obtain results on the classifications of nonsingular $2\times2\times2\times2$ tensors in Section \ref{sec:fourfold tensors}. In Section \ref{sec:quasi hermitian} we investigate the orbits on points of $\PG(3,q^2)$ under the action of (a subgroup of) the stabiliser of the quadric $\cQ_0$. This is finalised in Section \ref{section plane intersection numbers} where we compute the intersection numbers of planes of $\PG(3,q^2)$ with the (join of) orbits computed in Section \ref{sec:quasi hermitian}.
Using this, in Section \ref{sec: new quasi hermitian}, we provide our construction of new quasi-Hermitian surfaces.
Finally, in Section \ref{section delta equation} we point out how to obtain explicit equations for the quasi-Hermitian varieties of $\PG(n,q^2)$ constructed in \cite{Pavese2015}, for any $n\geq3$.

\section{A model for threefold tensors}\label{sec:model for threefold tensors}
In this section we develop a useful model for threefold tensors. We begin by recalling the well-known models for one-fold tensors (i.e. vectors) and two-fold tensors (i.e. matrices), before moving on to threefold tensors. All of this is well known in various formats, but we wish to formally introduce everything in a coherent setting.

\subsection{One- and twofold tensors}\label{sec:one- and twofold}

Let $V=V(n,q)$ be an $n$-dimensional vector space over the finite field $\Fq$. We can identify the elements of $V$ with the elements of the extension field $\Fqn$. We denote by $V^\vee$ the usual dual of $V$, that is, the space of linear forms from $V$ to $\Fq$.

It is well known that we can embed $\Fqn$ as a subspace of $V(n,q^n)$ in a number of ways. In the {\it standard model}, we choose an $\Fq$-basis $\{e_1,\ldots,e_n\}$ for $\Fqn$, and define a map $\phi_s:x = \sum_i x_ie_i\mapsto (x_1,x_2,\cdots,x_n)$. We denote the image of this embedding by $V_s$. The endomorphism ring for the standard model is the space of $n\times n$ matrices with entries in $\Fq$, which we denote as usual by $M_n(\Fq)$. 

Another convenient method is to use the {\it cyclic model}, which is associated with the map $\phi_c:x\mapsto (x,x^q,\cdots,x^{q^{n-1}})$. We denote the image of this embedding by $V_c$. The endomorphism ring for the cyclic model is the space of $n\times n$ {\it Dickson matrices} with entries in $\Fqn$, which we denote by $\cD_n(\Fq)$. This is an $\Fq$-subspace of $M_n(\Fqn)$, consisting of all matrices of the form
\[
\npmatrix{f_0&f_1&\ldots&f_{n-1}\\
		f_{n-1}^{q}&f_0^{q}&\ldots&f_{n-2}^q\\
		\vdots&\vdots&\ddots&\vdots\\
		f_1^{q^{n-1}}&f_2^{q^{n-1}}&\ldots&f_0^{q^{n-1}}}^T
\]
where $f_i\in \Fqn$. 
A matrix of this form acts on vectors in the cyclic model as $\phi_c(x)\mapsto \phi_c(f(x))$, where $f(x) = \sum_{i=0}^{n-1}f_ix^{q^i}$. We denote the above matrix by $D_f$. We can view $f(x)$ as a polynomial in $\Fqn[x]$; a polynomial of this shape is called a {\it linearised polynomial}. The group of isomorphisms for the cyclic model is then equal to the set of invertible Dickson matrices, and is isomorphic to $\GL(n,q)$.

The dual space of the standard model $V_s$ is obtained via the usual dot product; $\tilde{y}(x) := \sum x_iy_i$, where $x_i,y_i$ are the coordinates of $x$ and $y$ with respect to the same fixed basis. The dual space of the cyclic model $V_c$ is obtained via the trace map $\tr$ from $\Fqn$ to $\Fq$: $\hat{y}(x) := \tr(xy)= \sum_{i=0}^{n-1}(xy)^{q^i}$.

The tensor product space $\Fqn\otimes _{\Fq}\Fqn^\vee$, which is isomorphic to the space of $\Fq$-linear maps from $\Fqn$ to itself, can then be identified with either the space of matrices $M_n(\Fq)$ using the standard model $V_s\otimes V_s^\vee$, or the space of Dickson matrices $\cD_n(\Fq)$ using the cyclic model $V_c\otimes V_c^\vee$. Both of these are $\Fq$-subspaces of the space $M_n(\Fqn)$. These two spaces can be mapped to one another via conjugation by a {\it Moore matrix}.

The representation of linear maps by Dickson matrices, or the closely related concept of {\it linearised polynomials}, has often been used fruitfully to construct objects which appear much more difficult when working in $M_n(\Fq)$; for example, when constructing {\it MRD codes}, see \cite{SheekeyMRD}.

The image of the set of {\it pure tensors} is the set of rank one matrices in $M_n(\Fq)$ and $\cD_n(\Fq)$ respectively in the standard and cyclic models. The natural action of $\GL(n,q)^2$ on $\Fqn\otimes _{\Fq}\Fqn^\vee$ then acts via multiplication on the left and right by an invertible matrix in $M_n(\Fq)$ or $\cD_n(\Fq)$ respectively. In both cases, the natural action of $S_2$ on twofold tensors acts by transposing the matrices.

\subsection{Threefold tensors and algebras}\label{sec:threefold tensors and algebras}

Consider now a threefold tensor product space of $n$-dimensional vector spaces over $\Fq$. It is well-known that this space is isomorphic to the space of $\Fq$-bilinear maps from the product of two $n$-dimensional vector spaces to a third; that is, the space of (not necessarily associative) $n$-dimensional algebras over $\Fq$.

Using the standard model for $\Fqn$ to construct the tensor product space $V_s^\vee\otimes V_s^\vee\otimes V_s$, we obtain a correspondence between algebras and $n\times n\times n$ hypercubes of elements of $\Fq$, which can be represented by elements of $V(n^3,q)$, or projectively as points of $\PG(n^3-1,q)$. 

In this paper we will work instead with $V_c^\vee\otimes V_c^\vee\otimes \Fqn$, where all tensor products are over $\Fq$. In this model, pure tensors define $\Fq$-bilinear maps from $V_c\times V_c$ to $\Fqn$, which we can view also as $\Fq$-bilinear maps from $\Fqn\times \Fqn$ to $\Fqn$, in the following way:
\begin{align*}
\hat{\alpha}\otimes\hat{\beta}\otimes \gamma:(x,y)&\mapsto \tr(\alpha x)\tr(\beta y)\gamma \\
&=\phi_c(x)(\gamma \phi_c(\alpha)^T\phi_c(\beta))\phi_c(y)^T.
\end{align*}

Thus we can represent this map as a matrix $$\gamma\phi_c(\alpha)^T\phi_c(\beta) = \gamma \npmatrix{\alpha\\\alpha^q\\\vdots\\\alpha^{q^{n-1}}}
\npmatrix{\beta&\beta^q&\cdots&\beta^{q^{n-1}}}
\in M_n(\Fqn).$$ %{\color{red}better to change $a,b,c$ for $\alpha,\beta,\gamma$ or similar}

As usual, this can be extended linearly so that any element of $V_c^\vee\otimes V_c^\vee\otimes \Fqn$ defines an $\Fq$-bilinear map. From the additivity of the above expression, we get that every threefold tensor can be identified with a matrix $C\in M_n(\Fqn)$. Conversely, every matrix $C\in M_n(\Fqn)$ defines an $\Fq$-bilinear map in the following way:
\begin{align*}
C:(x,y) 
&\mapsto\phi_c(x)C\phi_c(y)^T\\
&= \sum_{i.j=0}^{n-1}c_{ij}x^{q^i}y^{q^j}.
\end{align*}
Hence we can identify the space of threefold tensors with the space $M_n(\Fqn)$, or projectively as points of $PG(n^2-1,q^n)$. As we will note later, moving to the projective setting has some drawbacks, but we will aim to show that some rich and interesting geometry arises. We will abuse terminology a little, and refer to this representation as the {\it cyclic model for threefold tensors}.
We denote by $\Phi_c$ the map corresponding to the cyclic representation of threefold tensors, namely 
\[
\Phi_c:V^\vee\otimes V^\vee\otimes\Fqn\mapsto M_n(\Fqn)
\]
with 
\[
\Phi_c(T)=\sum_{i=1}^r\gamma_i\phi_c(\alpha_i)^T\phi_c(\beta_i)
\]
and $T=\sum_{i=1}^r\alpha_i\otimes\beta_i\otimes\gamma_i$. We denote by $M_T$ the matrix representation $\Phi_c(T)$ of the tensor $T$. This representation of a multiplication is one of the commonly used representations in the theory of finite nonassociative algebras, particularly {\it finite semifields}; that is, nonassociative division algebras. However when studying the tensors associated to semifields, most work (e.g. \cite{LaShOrbits}\cite{LavrauwSheekey2022}), as well as most work on classifying tensors in general (e.g. \cite{Stavrou}) has focused on the standard model. This is in part because the easiest way to study threefold tensors is via their {\it contractions}, and the geometry of contractions in the standard model has to date been much easier. In this paper we will address this apparent weakness of the cyclic model. Indeed, we will see that we will be able to view a tensor and its contraction in the same space, related by natural geometric properties. 

Moreover, the geometry of this setup will lead us to further understanding of invariants of semifields and higher order nonsingular tensors. As a further unexpected consequence, through a detailed study of the orbits of points under a naturally occurring group action, we will obtain new examples of quasi-Hermitian varieties with automorphism group $\mathrm{PCGO}^+(3,q)$, and therefore new non-isomorphic strongly regular graph with the same automorphism group.

\subsection{Contractions and Nonsingularity}\label{sec:contractions and nonsingularity}

A {\it contraction} of a tensor $T\in \bigotimes_{i=1}^k V_i$ is the image of $T$ under an element of the dual space of one of the component vector spaces $V_i$. More precisely, given an element $v_j^\vee\in V_j^\vee$ and a pure tensor $T=\otimes_{i=1}^k u_i$, we define 
\[
v_j^\vee(T) = v_j^\vee(u_j)\bigotimes_{i=1,i\ne j}^k u_i\in \bigotimes_{i=1,i\ne j}^k V_i.
\]
This is then extended linearly to define the contraction of any tensor.

The contraction of a twofold tensor (or matrix, or linear map) is a onefold tensor (simply, a vector). This can be viewed as the evaluation of a linear map at a vector, or a linear combination of the rows or columns of a matrix. 

A tensor is {\it nonsingular} if every nonzero contraction of it is nonsingular; this gives a recursive definition, with the base case of a vector being nonsingular if and only if it is nonzero. Thus we see that in the twofold case this coincides with the usual definition of nonsingularity of a linear map or matrix; every nontrivial evaluation of the linear map is nonzero, and every nontrivial linear combination of the rows or columns of the matrix is nonzero. In the threefold case, a tensor is nonsingular precisely when the algebra it defines is a semifield. We refer to \cite{Lavrauw2013} for background.

A {\it contraction space} of a tensor is the space obtained by contracting by all vectors in a fixed dual space $V_j^\vee$; that is, $C_j(T) = \{v_j^\vee(T):v_j^\vee\in V_j^\vee\}$.
 A contraction space is said to be nonsingular if all its elements are nonsingular, and its dimension is equal to $\dim(V_j)$.
A contraction space is a subspace of a tensor product space with fewer factors than $T$; as such, they are not naturally defined in the same space, and so it is difficult to analyse tensors and their contractions in the same setting. However we will now demonstrate how we can place both a tensor and its contraction in the same setting.

In particular, one contraction space of a tensor in the cyclic model $V_c^\vee\otimes V_c^\vee\otimes \Fqn$, which can be represented by $M_n(\Fqn)$, will naturally lie in $V_c^\vee\otimes V_c^\vee$, which can be represented by $\cD_n(\Fq)$, an $\Fq$-subspace of $M_n(\Fqn)$. Projectively, the points of $\PG(n^2-1,q^n)$ defined by Dickson matrices form a subgeometry $\Sigma$ isomorphic to $\PG(n^2-1,q)$.

%{\color{red}continue}

\subsection{Group actions, isotopisms, and collineations}\label{sec:groupaction}
The group $\mathrm{GL}(V)\times \mathrm{GL}(V)\times \mathrm{GL}(V)$ naturally acts on $V^\vee\otimes V^\vee\otimes V$. The action of $(f,g,h)\in\mathrm{GL}(V)\times \mathrm{GL}(V)\times \mathrm{GL}(V)$ is defined on pure tensors as
\[
(a^{\vee} \otimes b^{\vee} \otimes c)^{(f,g,h)} := f(a)^{\vee} \otimes g(b)^{\vee} \otimes h(c);
\]
and extended linearly to all tensors. The induced action on the algebras corresponding to threefold tensors coincides with the notion of {\it isotopism}; two muliplications $\circ$ and $\star$ are {\it isotopic} if there exist invertible additive maps $(f,g,h)$ such that $h(f(x)\circ g(y))= x\star y$.

The action induced on $M_n(\Fqn)$ can be seen as follows. We split the description into two components for convenience.
\begin{align*}
    (f,g,1)&:M_T\mapsto D_f M_TD_g^t,\\
    (1,1,h)&:M_T\mapsto \left(\sum_{i=0}^{n-1}h_i M_T^{\sigma^i}\right)
\end{align*}
where $D_f,D_g$ are the Dickson matrices corresponding to the maps $f,g$ respectively, and $M_T^\sigma$ is the matrix whose $(i,j)$-entry is $\alpha_{i-1,j-i}^q$, where $\alpha_{i,j}$ is the $(i,j)$-entry of $M_T$, and indices are taken modulo $n$.

Note that the maps $(f,g,h)$ define a collineation of $\PG(n^2-1,q^n)$ if and only if $h$ is a monomial; that is, $h(x)= ax^{q^k}$ for some $a\in \Fqn$ and some integer $k$. However, in Section \ref{sec:thirdgl} we will see that the action of $GL(V)$ on the third factor can still be interpreted in a geometric way.

We recall two relevant invariants for tensors and their related algebras. 

\begin{definition}
    The {\it matrix rank} $\mrk(T)$ of $T$ is defined as the minimum rank of the matrix $M_S$ representing a tensor $S$ equivalent to $T$ in the cyclic model.
\end{definition}

We will abuse notation and denote by $T(x,y)$ the bilinear multiplication associated with the tensor $T$.
Then we have $T^{(f,g,h)}(x,y) = \left( T(x^f,y^g)\right)^h$.

The {\it tensor rank} $\trk(T)$, or simply the rank, of the tensor $T\in\bigotimes_{i=1}^n\V_i$ is the minimum integer $r$ such that we can write 
$T$ as sum of $r$ pure tensors,
\[
T=\sum_{j=1}^rv_1^j\otimes\cdots\otimes v_n^j.
\]
Clearly $\trk(T) = \trk(T^{(f,g,h)})$ for all $f,g,h \in \GL(n,q)$.
The matrix rank $\mrk(T)$ of $T$ is defined as the rank of the matrix $M_T$ representing $T$ in the cyclic model.
We have that 
\[
\trk(T)\geq\mrk(T)
\]
for all $T$.

%\subsection{Segre varieties and quadrics}

\subsection{Action of $Sym(3)$}\label{symmetricaction}

Threefold tensors are usually studied up to $\mathrm{GL}(\V)\wr\mathrm{Sym}(3)$-equivalence, with the natural action of $S_3$ on threefold tensors. 

%Two tensors $T_1,T_2$ are equivalent if and only if, for any $i$, their $i$-th contraction spaces $C_i(T_1)$ and $C_i(T_2)$ are equivalent. 

The action of $S_3$ on the cyclic model can be realised as follows. Let $\mathrm{Sym}(3)=\langle \tau_1,\tau_2\rangle$, where $\tau_1=(12)$ and $\tau_2=(123)$.
Then $\tau_1$ acts as a transposition, namely $\tau_1(M_T)_{i,j}=(M_T)_{j,i}$, and $\tau_2$ acts on $M_T$ via
\[
\tau_2(M_T)_{i,j}=\big( (M_T)_{j-i,-i}\big)^{q^i},
\]
where indices are taken modulo $n$. To see this, consider the action of $\tau_1$ and $\tau_2$ on pure tensors. We have $\tau_1(a^\vee\otimes b^\vee\otimes c)=(b^\vee\otimes a^\vee\otimes c)$, and $\tau_2(a^\vee\otimes b^\vee\otimes c)=(c^\vee\otimes a^\vee\otimes b)$. The claim for pure tensors is then straightforward by looking at the corresponding matrices.
Since the action is $\fq$-linear it holds for general tensors as well.

\subsection{Finalising our setup}\label{sec:finalising}
Throughout the remainder of the paper we will work with the following objects with the following notation.
\begin{itemize}
    \item The space $\Sigma^*=\PG(n^2-1,q^n)$, with points $P= \langle M_T\rangle$ for $M_T=(c_{ij})_{i,j}\in M_n(\Fqn)$. {Abusing notation, we will write $P=\langle T \rangle$ as well.}
\item The collineation $\sigma$ of $\Sigma^*$ defined by the semilinear map (also denoted $\sigma$, with some abuse of notation) mapping $M_T$ to $M_T^\sigma$ where $(M_T^\sigma)_{i,j} = c_{i-1,j-1}^q$.
    
    \item The subgeometry $\Sigma$ of points fixed by $\sigma$, consisting of points defined by Dickson matrices in $\D_n(\Fq)$. 
\end{itemize}

\subsection{Contractions in the cyclic model}\label{sec: contractions in the cyclic model}

This setup now allows us to view the contraction space of a tensor and the tensor itself as two objects in the same projective space with a natural geometric relationship. This will be key to our results throughout the paper.

Though there are multiple choices for a contraction space, it turns out the one that fits most conveniently into this setting is the third contraction space.

% \begin{theorem}\label{contraction is subline}
%     The contraction space $C_3(T)$ of a tensor $T$ is the unique subspace of $\Sigma$ of minimal dimension whose extension to $\Sigma^*$ contains the point $P=\langle T\rangle$.
% \end{theorem}

% \begin{proof}
%     Let $T= \sum_{i=1}^r u_i\otimes v_i\otimes w_i$. Note that $T^{\sigma^j}= \sum_{i=1}^r u_i\otimes v_i\otimes (w_i^{q^j})$
    
%     Then for $a\in \Fqn$ we have 
% \begin{align*}
%     a^\vee(T) &= \sum_i \tr(aw_i)(u_i\otimes v_i)\\
%     &=\sum_{i}\sum_j a^{q^j}w_i^{q^j}(u_i\otimes v_i)\\
%     &= \sum_{j} a^{q^j}\sum_i w_i^{q^j}(u_i\otimes v_i)\\
%     &= \sum_j a^{q^j} T^{\sigma^j}\in \mathrm{Fix}(\sigma).
% \end{align*}
% Hence the points defined by $a^\vee(T)$ form a subspace of $\Sigma$, and the extension of this subspace is $\langle T,T^\sigma,\ldots,T^{\sigma^{n-1}}\rangle$, which clearly contains $T$.

% \end{proof}

Consider a tensor $T =\sum_{i=1}^r u_i^\vee\otimes v_i^\vee\otimes w_i$, with corresponding matrix $M_T=\Phi_c(T)$. Let $P=\langle M_T\rangle$ denote the corresponding point of $\Sigma^*$. 

We can identify the contraction space $C_3(T)$ with a subspace of $\cD_n(\Fq)$, or projectively as a subspace of $\Sigma$, as follows. 

The contraction with $z^\vee\in V^\vee$, the dual space of the third vector space, is given by 
\[
z^\vee(T)=\sum_{i=1}^r u_i^\vee\otimes v_i^\vee z^\vee(w_i)=\sum_{i=1}^r u_i^\vee\otimes v_i^\vee \tr(zw_i).
\]
Note that, since $\tr(zw_i)\in\fq$, $z^\vee(T)$ can be identified with the matrix 
\[
z^\vee(M_T) := \sum_{i=1}^r \phi_c(u_i)^\mathrm{T}\phi_c(v_i)\tr(zw_i),
\]
which is a Dickson matrix. We denote the induced subspace of $\Sigma$ by $C_3(M_T)$.

Hence we can now see a tensor and its contraction space in the same projective space. In the next theorem, we show that in fact these obey by a natural geometric relationship.

%{\color{red} maybe we should use $\Phi_c(T)$ in this theorem, it could be a bit confusing otherwise. Because $P$ is $\langle \Phi_c(T)\rangle$, and is $P$ which belongs to the extended subspace, isn't it? Or is it correct to say that $T$ itself belongs to it?}
\begin{theorem}\label{contraction is subline}
    The contraction space $C_3(M_T)$ of a tensor with matrix representation $M_T$ is the unique subspace of $\Sigma$ of minimal dimension whose extension to $\Sigma^*$ contains $P=\langle M_T\rangle$.
\end{theorem}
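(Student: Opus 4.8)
The plan is to reduce the statement to a standard Galois-descent fact about subgeometries, together with one explicit computation identifying $C_3(T)$ with the span of the Frobenius orbit of $P$. Recall that $\sigma$ is a semilinear collineation of $\Sigma^*$ of order $n$ whose fixed points form $\Sigma$, so that $M_n(\Fqn)=\cD_n(\Fq)\otimes_{\Fq}\Fqn$ is the associated descent. For the point $P=\langle M_T\rangle$ I would set $U=\langle M_T,M_T^\sigma,\ldots,M_T^{\sigma^{n-1}}\rangle_{\Fqn}$. This $U$ is $\sigma$-invariant, hence defined over $\Fq$: its fixed vectors $U\cap\cD_n(\Fq)$ span $U$ over $\Fqn$ and satisfy $\dim_{\Fq}(U\cap\cD_n(\Fq))=\dim_{\Fqn}U$. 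Conversely, if $S$ is any subspace of $\Sigma$ whose extension $S^*$ contains $P$, then $S^*$ is $\sigma$-invariant (as $S$ is pointwise fixed), so $S^*\supseteq P^{\sigma^m}$ for all $m$ and thus $S^*\supseteq U$, giving $S=\Sigma\cap S^*\supseteq\Sigma\cap U$. Since $\Sigma\cap U$ itself has extension $U\ni P$, it is the unique subspace of $\Sigma$ of minimal dimension with extension containing $P$. It then remains only to prove $C_3(T)=\Sigma\cap U$.

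For the explicit part I would write $T=\sum_{i=1}^r\hat\alpha_i\otimes\hat\beta_i\otimes\gamma_i$, so that $M_T=\sum_i\gamma_iB_i$ with $B_i=\phi_c(\alpha_i)^T\phi_c(\beta_i)$ the matrix of the twofold tensor $\hat\alpha_i\otimes\hat\beta_i$. A direct check shows $(B_i)_{k\ell}=\alpha_i^{q^k}\beta_i^{q^\ell}$ is fixed by $\sigma$, so each $B_i\in\cD_n(\Fq)$. Contracting $T$ by the functional $\hat\delta=\tr(\delta\,\cdot\,)$ scales each $\gamma_i$ by $\tr(\delta\gamma_i)\in\Fq$, so $\hat\delta(T)$ is represented by the Dickson matrix $N^{(\delta)}=\sum_i\tr(\delta\gamma_i)B_i$; as $\delta$ runs over $\Fqn$ these describe $C_3(T)$, confirming $C_3(T)\subseteq\Sigma$. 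On the other hand, from $(M_T^{\sigma^m})_{k\ell}=(M_T)_{k-m,\ell-m}^{q^m}$ one gets $M_T^{\sigma^m}=\sum_i\gamma_i^{q^m}B_i$.

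The crux is the identity obtained by expanding the trace: since $\tr(\delta\gamma_i)=\sum_{m=0}^{n-1}\delta^{q^m}\gamma_i^{q^m}$, collecting terms gives $N^{(\delta)}=\sum_{m=0}^{n-1}\delta^{q^m}M_T^{\sigma^m}$. Hence $\langle C_3(T)\rangle_{\Fqn}\subseteq U$. For the reverse inclusion I would pick an $\Fq$-basis $\delta_1,\ldots,\delta_n$ of $\Fqn$; the Moore matrix $(\delta_j^{q^m})_{m,j}$ is invertible over $\Fqn$, so the relations $N^{(\delta_j)}=\sum_m\delta_j^{q^m}M_T^{\sigma^m}$ invert to express each $M_T^{\sigma^m}$ as an $\Fqn$-combination of the $N^{(\delta_j)}\in C_3(T)$, yielding $\langle C_3(T)\rangle_{\Fqn}=U$. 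Combining with the descent fact, $C_3(T)$ and $\Sigma\cap U$ are $\Fq$-subspaces of $\cD_n(\Fq)$ with $C_3(T)\subseteq\Sigma\cap U$ and $\dim_{\Fq}C_3(T)=\dim_{\Fqn}U=\dim_{\Fq}(\Sigma\cap U)$, so they coincide.

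I expect the main obstacle to be careful bookkeeping rather than a conceptual hurdle: one must verify that the natural matrix attached to the twofold contraction really is a Dickson matrix (so that $C_3(T)\subseteq\Sigma$ at all), and that the descent correspondence between $\sigma$-invariant $\Fqn$-subspaces of $\Sigma^*$ and $\Fq$-subspaces of $\Sigma$ is dimension-preserving, which is precisely what makes ``minimal dimension'' and ``unique'' hold simultaneously. The trace-expansion identity $N^{(\delta)}=\sum_m\delta^{q^m}M_T^{\sigma^m}$ carries the real content and is cleanest to check first on pure tensors and then extend by $\Fq$-linearity.
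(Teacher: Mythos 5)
Your proof is correct and takes essentially the same route as the paper's: the central trace-expansion identity $N^{(\delta)}=\sum_{m}\delta^{q^m}M_T^{\sigma^m}$ is exactly the computation the paper performs to show $z^\vee(\Phi_c(T))=\sum_j z^{q^j}\Phi_c(T)^{\sigma^j}\in\mathrm{Fix}(\sigma)$. The only difference is that you make explicit (via $\sigma$-invariance, Galois descent, and the Moore-matrix inversion) the minimality and uniqueness of $C_3(T)$, which the paper's proof leaves implicit after identifying the extension with $\langle T,T^\sigma,\ldots,T^{\sigma^{n-1}}\rangle$.
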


\begin{proof}
Let $T= \sum_{i=1}^r u_i^\vee\otimes v_i^\vee\otimes w_i$, with corresponding matrix $M_T$. The subspace of $\Sigma$ of minimal dimension whose extension to $\Sigma^*$ contains $P=\langle M_T\rangle$ is the space defined by the matrices
\[
\left\{\sum_j z^{q^j} M_T^{\sigma^j}:z\in \Fqm\right\}.
\]
The extension of this space to $\Sigma$ is the space $\langle M_T,M_T^\sigma,\ldots,M_T^{\sigma^{n-1}}\rangle$. Note that the (vector space) dimension of this space may be smaller than $m$. It remains to show that this space coincides with $C_3(M_T)$.

%Note that $T^{\sigma^j}= \sum_{i=1}^r u_i\otimes v_i\otimes (w_i^{q^j})$
%The contraction with $z^\vee\in V^\vee$, the dual space of the third vector space, is given by 
%\[
%z^\vee(T)=\sum_{i=1}^r u_i^\vee\otimes v_i^\vee z^\vee(w_i)=\sum_{i=1}^r u_i^\vee\otimes v_i^\vee \tr(zw_i).
%\]
%Note that, since $\tr(zw_i)\in\fq$, $z^\vee(T)$ can be identified via $\Phi_c$ with the matrix 
%\[\sum_{i=1}^r \phi_c(u_i)^\mathrm{T}\phi_c(v_i)\tr(zw_i),
%\]
%which is a Dickson matrix. 
%As $z^\vee$ runs over $\Fqn^\vee$, we obtain an $\fq$-subspace of Dickson matrices $C_3(T)$. 

For $z\in \Fqn$ we have 
\begin{align*}
z^\vee(M_T) &= \sum_i \tr(zw_i)(\phi_c(u_i))^T\phi_c(v_i)\\
&=\sum_{i}\sum_j z^{q^j}w_i^{q^j}((\phi_c(u_i))^T\phi_c(v_i))\\
    &= \sum_{j} z^{q^j}\sum_i w_i^{q^j}((\phi_c(u_i))^T\phi_c(v_i))\\
    &= \sum_j z^{q^j} M_T^{\sigma^j}\in \mathrm{Fix}(\sigma),
\end{align*}
as required. Here we used that $M_T^{\sigma^j}= \sum_{i=1}^r \phi_c(u_i)^T\phi_c(v_i)(w_i^{q^j})$.
%Hence the points defined by $z^\vee(T)${\color{red}, as $z^\vee$ runs over $\Fqn^\vee$,} form a subspace of $\Sigma$, and the extension of this subspace is $\langle T,T^\sigma,\ldots,T^{\sigma^{n-1}}\rangle$, which clearly contains $T$.
\end{proof}

For example, when $n=2$ we have that $\Sigma$ is a Baer subgeometry of $\Sigma^*$. It is well-known that every point of $\Sigma^*$ outside $\Sigma$ lies on a unique line meeting $\Sigma$ in a line of $\Sigma$.

We will denote the extension of the contraction space $C_3(M_T)$ by
\[
\overline{C}_3(M_T) := \langle M_T,M_T^\sigma,\ldots,M_T^{\sigma^{n-1}}\rangle.
\]
%{\color{red} same as above: should we use $\Phi_c(T)$ in place of $T$?}

Note that we can see how this contraction space relates to the linear maps of multiplication in an algebra defined by $T$ as follows. Let $M_T= (\alpha_{ij})$. Then a Dickson matrix corresponding to $z^\vee(T)$ is $\sum_i z^{q^j}M_T^{\sigma^j}$, which corresponds to the linear map
\[
x\mapsto \sum_{i,j} \alpha_{-j,i-j}^{q^j}x^{q^i}z^{q^j} =:h_z(x).
\]
This is the map of multiplication on the right by $z$ in the algebra defined by $\tau_2^2(T)$ as defined in Section \ref{symmetricaction}.

Note that we can also see the relationship between the maps $h_z$ and the linear maps $f_y$ of right multiplication in the algebra defined by $T$ through the observation
\[
Tr(f_y(x)z)=Tr(h_z(x)y).
\]
This type of relationship naturally arises when viewing the action of $S_3$ on trilinear forms defined by tensors, rather than on the bilinear maps.

\subsection{Interpreting the third action of $GL(V)$ in the cyclic model}\label{sec:thirdgl}

In Section \ref{sec:groupaction} we saw that only two of the actions of $\GL(V)$ give rise to collineations of $\PG(n^2-1,q^n)$. However we can now give a geometric interpretation of the third action.

\begin{lemma}
    Let $h$ be an invertible $\Fq$-linear map from $\Fqn$ to itself, and let $T\in \Sigma^*=\PG(n^2-1,q^n)$. Then $T^{(1,1,h)}\in \overline{C}_3(T)$, and  $C_3(T^{(1,1,h)})= C_3(T)$.
\end{lemma}
\begin{proof}
    Let $h(x) = \sum_{i=0}^{n-1}h_ix^{q^i}$, and let $z\in \Fqn$. Then 
    \[
    M_{T^{(1,1,h)}} = \sum_{i=0}^{n-1}h_i M_T^{\sigma^i}\in \overline{C}_3(T).
    \]
    Since $C_3(T)$ is closed under $\sigma$, we have that $C_3(T^{(1,1,h)})\leq C_3(T)$. Since $h$ is invertible, we can apply the same argument to show that $ C_3(T)=C_3((T^{(1,1,h))})^{(1,1,h^{-1})}) \leq C_3(T^{(1,1,h)})$, proving the claim.
\end{proof}
Thus the third action of $\GL(V)$ fixes the contraction space, and permutes the points in the extension of this space. For example, when $n=2$, this group acts transitively on points of an extended subline of $\Sigma$ outside of $\Sigma$.

We now characterise which elements of $\overline{C}_3(T)$ represent tensors equivalent to $T$. For any set $S$ of points, denote by $\Omega_k(S)$ the set of points contained in some subspace spanned by $k+1$ points of $S$. We can refer to this as the {\it $k$-th secant variety} of $S$.

\begin{lemma}
    Suppose $\{M_T^{\sigma^i}:i=0,1,\ldots,n-1\}$ are linearly independent over $\Fqn$. Then the points of $\overline{C}_3(T)$ corresponding to tensors $T^{(1,1,h))}$ for some $h\in \GL(V)$ are precisely the points of $\overline{C}_3(T)$ not contained in $\Omega_{n-2}(C_3(T))$.
\end{lemma}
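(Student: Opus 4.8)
The plan is to transport the entire question into the ``coefficient space'' of linearised polynomials, where both the subgeometry $C_3(T)$ and the secant variety $\Omega_{n-2}(C_3(T))$ acquire a transparent rank-theoretic description. By the previous lemma, $M_{T^{(1,1,h)}}=\sum_{i=0}^{n-1}h_iM_T^{\sigma^i}$ for $h(x)=\sum_i h_ix^{q^i}$. Under the independence hypothesis the matrices $M_T^{\sigma^i}$ form an $\Fqn$-basis of the $n$-dimensional space underlying $\overline{C}_3(T)$, so the assignment $h\mapsto \langle\sum_i h_iM_T^{\sigma^i}\rangle$ is a projective isomorphism from $\PG(n-1,q^n)$, viewed as coefficient vectors $(h_0,\dots,h_{n-1})$ of linearised polynomials, onto $\overline{C}_3(T)$. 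Under this dictionary the condition $h\in\GL(V)$ is simply $\rank_{\Fq}(h)=n$, since $\GL(V)$ consists precisely of the invertible $\Fq$-linear maps of $\Fqn$.

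First I would identify the subgeometry. The computation in the proof of Theorem~\ref{contraction is subline} shows that the points of $C_3(T)$ are exactly those with coefficient vector $\phi_c(z)=(z,z^q,\dots,z^{q^{n-1}})$, i.e. the maps $h(x)=\sum_i z^{q^i}x^{q^i}=\Tr(zx)$. Every $\Fq$-linear map of rank one is of the form $x\mapsto c\,\Tr(zx)$ with $z\neq0$, whose coefficient vector $c\,\phi_c(z)$ defines the same projective point as $\phi_c(z)$; conversely each $\Tr(zx)$ has rank one. Hence, under the dictionary above, $C_3(T)$ is precisely the locus of $\Fq$-rank-one linearised polynomials.

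Next I would identify the secant variety with a rank locus. A point of $\overline{C}_3(T)$ lies in $\Omega_{n-2}(C_3(T))$ iff it is an $\Fqn$-linear combination of $n-1$ points of $C_3(T)$; since an $\Fqn$-scalar multiple of a rank-one map is again $\Fq$-linear of rank at most one, such a point corresponds to $h=\sum_{k=1}^{n-1}\lambda_k a_k$ with each $a_k$ of rank $\le1$, whence $\rank_{\Fq}(h)\le n-1$. Conversely, any $h$ with $\rank_{\Fq}(h)\le n-1$ decomposes as a sum of at most $n-1$ rank-one maps $c_k\,\Tr(z_k x)$ along a basis of its image, each representing a point of $C_3(T)$, so $h\in\Omega_{n-2}(C_3(T))$. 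Therefore $\Omega_{n-2}(C_3(T))$ is exactly the locus of linearised polynomials of $\Fq$-rank at most $n-1$, and combining the two identifications gives $h\in\GL(V)\iff\rank_{\Fq}(h)=n\iff \langle M_{T^{(1,1,h)}}\rangle\notin\Omega_{n-2}(C_3(T))$, which is the claim.

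The main obstacle is conceptual rather than computational: recognising that the Baer-type subgeometry $C_3(T)$ becomes, after this change of coordinates, the classical variety of rank-one maps, so that $\Omega_{n-2}$ is its $(n-1)$-st secant variety and equals the singular locus. The one point requiring care is the interaction between $\Fqn$-spans (used to define $\Omega_{n-2}$) and the $\Fq$-rank: one must check that scaling a rank-one $\Fq$-linear map by an element of $\Fqn$ neither increases its $\Fq$-rank nor changes the projective point it defines, and that the decomposition of a rank-$r$ map into $r$ rank-one maps can be taken with summands of the admissible form $c\,\Tr(zx)$.
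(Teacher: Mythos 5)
Your proof is correct and follows essentially the same route as the paper's: identify $\overline{C}_3(T)$ with the coefficient space of linearised polynomials via the basis $\{M_T^{\sigma^i}\}$, recognise $C_3(T)$ as the rank-one locus, and characterise invertibility as lying outside the $(n-2)$-nd secant variety. The only difference is that you explicitly verify the secant-variety/rank correspondence (including the decomposition of a rank-$r$ map into summands $c\,\Tr(zx)$), which the paper cites as "the usual characterisation" without proof.
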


\begin{proof}
    From the proof of the previous lemma, we have that the set of points are those whose coordinates with respect to the basis $\{M_T^{\sigma^i}:i=0,1,\ldots,n-1\}$ are the coefficients of an invertible linearised polynomial $h(x)$. The points of $C_3(T)$ are those whose coordinates are of the form $(\alpha,\alpha^q,\ldots,\alpha^{q^{n-1}})$ for $\alpha\in \Fqn^\times$. These are precisely the coordinates of a linearised polynomial of rank one. Hence the result follows from the usual characterisation of invertible maps as those not contained in the $(n-2)$-nd secant variety to the set of rank one linear maps.
\end{proof}

\section{An upper-bound for the BEL rank of a semifield}\label{sec:bound Bel rank}

The {\it BEL rank} of a finite algebra is an isotopy invariant defined in \cite{LavrauwSheekey2017BEL}, and further studied in \cite{ZiniZullo2021}, \cite{SheekeyVandeVoordeVoloch2022}. It was shown in \cite[Theorem 11]{LavrauwSheekey2017BEL} that the BEL rank of an algebra defined by a tensor $T$ is equal to the
matrix rank of a tensor $S_3$-equivalent to $T$. In short, it is the minimal $r$ such that there exist linear maps $f_i,g_i$ such that the multiplication in one of the algebras $S_3$-equivalent to $T$ can be expressed as
\[
x\circ y = \sum_{i=1}^r f_i(x)g_i(y).
\]

A semifield has BEL-rank two if and only if it is isotopic to a field. Some important families, such as generalised twisted fields and semifields two-dimensional over a nucleus, are known to have BEL-rank two. Menichetti's celebrated classification result showed that if $n=3$, or if $n$ is prime and $q$ is large enough, then every semifield is isotopic to a field or a twisted field. 

In \cite{LavrauwSheekey2017BEL}, computational data for the case $q=2$, $n=6$ lead to the conjecture that the BEL-rank of a semifield is at most $n-1$ (and semifields with this BEL-rank do exist for these parameters, and other small parameters).

For a point of $\PG(n^2-1,q^n)$ in the cyclic model, we define its rank to be the usual linear algebraic rank of a matrix representing that point. We then obtain the following.

\begin{lemma}
    The BEL-rank of a tensor $T$
    is equal to the minimum rank of an element of $\overline{C}_3(T)$ not contained in $\Omega_{n-2}(C_3(T))$.
\end{lemma}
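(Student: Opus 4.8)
The plan is to combine the characterization of BEL-rank from \cite[Theorem 11]{LavrauwSheekey2017BEL} with the two preceding lemmas, which together identify exactly which points of $\overline{C}_3(T)$ correspond to tensors equivalent to $T$. By that theorem, the BEL-rank of $T$ equals the minimum matrix rank $\mrk(S)$ taken over all tensors $S$ that are $S_3$-equivalent to $T$. However, passing through the full $S_3$-action is more than we need here: the relevant equivalences for the matrix rank of points inside $\overline{C}_3(T)$ are exactly those produced by the third $\GL(V)$-action, since these are the operations that move a point within the extension of the contraction space while preserving the algebra up to isotopy.

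First I would recall, from the lemma in Section \ref{sec:thirdgl}, that for every invertible $\Fq$-linear map $h$ the tensor $T^{(1,1,h)}$ lies in $\overline{C}_3(T)$, and conversely from the following lemma that (under the linear-independence hypothesis on $\{M_T^{\sigma^i}\}$) the points of $\overline{C}_3(T)$ arising as $T^{(1,1,h)}$ for some $h\in\GL(V)$ are precisely those \emph{not} contained in $\Omega_{n-2}(C_3(T))$. Thus the set of points of $\overline{C}_3(T)\setminus\Omega_{n-2}(C_3(T))$ is in bijection with the $\GL(V)$-orbit $\{T^{(1,1,h)}:h\in\GL(V)\}$. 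Since the rank of a point is by definition the rank of a matrix representing it, and each such point is represented by $M_{T^{(1,1,h)}}$, minimizing the rank over this set of points is the same as minimizing $\mrk(T^{(1,1,h)})$ over $h\in\GL(V)$.

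Next I would argue that this last minimum equals the BEL-rank. On one hand, every $T^{(1,1,h)}$ is isotopic to $T$ (indeed $(1,1,h)$ is an isotopism), so applying the characterization of BEL-rank as $\min_S \mrk(S)$ over $S_3$-equivalent $S$ gives that the BEL-rank is at most $\min_h \mrk(T^{(1,1,h)})$. For the reverse inequality, I would invoke \cite[Theorem 11]{LavrauwSheekey2017BEL} in the sharper form: the BEL-rank is realized as a matrix rank, and the specific $S_3$-image whose matrix rank computes it is, after unwinding the action of $\tau_2$ described in Section \ref{symmetricaction}, precisely one obtained from $T$ by an action on the third factor. Concretely, the interpretation at the end of Section \ref{sec: contractions in the cyclic model}, where the maps $h_z$ of right multiplication in the algebra defined by $\tau_2^2(T)$ correspond exactly to the Dickson matrices $\sum_j z^{q^j}M_T^{\sigma^j}$ lying in $C_3(T)$, shows that the BEL expression $x\circ y=\sum_i f_i(x)g_i(y)$ of minimal length $r$ corresponds to writing an appropriate element of $\overline{C}_3(T)$ as a rank-$r$ matrix, and that the element in question is exactly one of the $T^{(1,1,h)}$. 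This yields $\mathrm{BEL}(T)\geq \min_h\mrk(T^{(1,1,h)})$.

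The main obstacle I anticipate is bookkeeping the $S_3$-action correctly, namely verifying that the matrix rank computing the BEL-rank corresponds to a point in the third-contraction extension $\overline{C}_3(T)$ rather than to one of the other two contraction spaces, and confirming that the minimizing matrix is indeed of the form $M_{T^{(1,1,h)}}$ for an \emph{invertible} $h$ (so that it avoids $\Omega_{n-2}(C_3(T))$). Once the identification $T^{(1,1,h)}\leftrightarrow$ point of $\overline{C}_3(T)\setminus\Omega_{n-2}(C_3(T))$ is firmly established via the two lemmas, together with the fact that rank is an isotopy-free invariant of a single matrix representing a point, the equality of the two minima is essentially immediate. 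I would close by noting that the linear-independence hypothesis is exactly what guarantees $\overline{C}_3(T)$ has full dimension $n-1$ so that the whole $\GL(V)$-orbit is visible inside it; in the degenerate case the same conclusion holds by restricting to the smaller subgeometry spanned by $\{T^{\sigma^i}\}$.
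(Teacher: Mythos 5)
Your overall route is the same as the paper's: identify the points of $\overline{C}_3(T)\setminus\Omega_{n-2}(C_3(T))$ with the orbit $\{T^{(1,1,h)}:h\in\GL(V)\}$ via the two lemmas of Section \ref{sec:thirdgl}, and then argue that minimising the rank over that set computes the BEL-rank. The first half of your argument matches the paper exactly. Where you wobble is the inequality $\mathrm{BEL}(T)\geq\min_h\rank(M_{T^{(1,1,h)}})$: you assert that the $S_3$-image realising the BEL-rank can, ``after unwinding the action of $\tau_2$,'' be taken to come from an action on the third factor alone, but you never actually carry out that unwinding, and it is not the mechanism the paper uses. The paper's fix is a one-line observation you omit: for invertible $f,g$ one has $M_{T^{(f,g,1)}}=D_fM_TD_g^t$ with $D_f,D_g$ invertible Dickson matrices, so $\rank(M_{T^{(f,g,1)}})=\rank(M_T)$. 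This immediately collapses the minimisation of $\rank(M_S)$ over the whole isotopy orbit of $T$ onto the suborbit $\{T^{(1,1,h)}\}$, giving both inequalities at once without any appeal to the $\tau_2$-bookkeeping or to the description of the maps $h_z$. I would replace your second paragraph of the reverse direction with this observation; as written, the claim that the minimiser is ``exactly one of the $T^{(1,1,h)}$'' is the one genuinely unproven step. Your closing remark about the degenerate (linearly dependent) case is a reasonable addition that the paper does not address explicitly.
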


\begin{proof}
    First note that for any invertible $\Fq$-linear maps $f,g$, we have that $\rank(M_{T^{(f,g,1)}})= \rank(M_T)$. Hence to calculate the minimal rank of a matrix equivalent to $M_T$, it suffices to calculate 
    \[
    \min\{\rank(M_{T^{(1,1,h)}}):h\in \GL(V)\}.
    \]
    In Section \ref{sec:thirdgl}, we saw that the elements $M_{T^{(1,1,h)}}$, $h$ invertible, are precisely the elements of $\overline{C}_3(T)$ not contained in $\Omega_{n-2}(C_3(T))$.
\end{proof}

This leads us to a proof of the conjecture from \cite{LavrauwSheekey2017BEL}. The proof is in the same spirit as that of \cite[Theorem 12]{GowSheekey}.

\begin{theorem}
    The BEL-rank of an $n$-dimensional finite semifield over $\Fq$ is at most $n-1$, provided that $q$ is large enough with respect to $n$.
\end{theorem}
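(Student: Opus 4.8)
The plan is to translate the problem, via the lemmata just proved, into a purely geometric statement about subspaces of $\Sigma^*=\PG(n^2-1,q^n)$ and then apply a counting/incidence argument valid for $q$ large. By the preceding Lemma, the BEL-rank of a semifield $T$ equals the minimum rank of a matrix representing a point of $\overline{C}_3(T)$ outside the secant variety $\Omega_{n-2}(C_3(T))$. Since $T$ is a semifield, $C_3(T)$ is a nonsingular contraction space: all its nonzero Dickson matrices are invertible, so it is an $n$-dimensional space of invertible matrices (a spread set). The goal is thus to produce, inside the $(n-1)$-dimensional projective space $\overline{C}_3(T)$, a point of rank at most $n-1$ that avoids $\Omega_{n-2}(C_3(T))$.

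First I would set up the rank stratification of $\overline{C}_3(T)$. For each $r$, the locus $R_{\le r}$ of matrices of rank at most $r$ is a projective variety (cut out by the vanishing of all $(r+1)\times(r+1)$ minors) of bounded degree and codimension $(n-r)^2$ in the full space $M_n(\Fqn)$; intersecting with the $\PG(n-1,q^n)$ that is $\overline{C}_3(T)$, I want to guarantee a point of rank $\le n-1$, i.e.\ a \emph{singular} matrix in the pencil. The key observation is that $\overline{C}_3(T)=\langle T,T^\sigma,\dots,T^{\sigma^{n-1}}\rangle$ contains many non-Dickson points, and a generic such point need not be invertible. Concretely, the determinant $\det$ restricted to $\overline{C}_3(T)$ is a homogeneous form of degree $n$ in $n$ variables over $\Fqn$; if it is not identically zero, its zero locus is a hypersurface of degree $n$ in $\PG(n-1,q^n)$, which by Lang–Weil (or elementary Chevalley–Warning-type counting) carries roughly $q^{n(n-2)}$ points, hence is nonempty once $q$ is large relative to $n$. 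Such a point has rank $\le n-1$.

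The next step is to arrange that this singular point lies \emph{outside} $\Omega_{n-2}(C_3(T))$, so that it genuinely corresponds to some $T^{(1,1,h)}$ with $h$ invertible. Here I would estimate dimensions: $C_3(T)$ is (the extension to $\Sigma^*$ of) a subgeometry of projective dimension $n-1$, but as a point set of $\Sigma^*$ it is a Baer-type subgeometry whose secant variety $\Omega_{n-2}(C_3(T))$ is contained in $\overline{C}_3(T)$ and is \emph{not} all of it (by the independence of $T,T^\sigma,\dots,T^{\sigma^{n-1}}$, which holds when $T$ is a semifield). Since $\Omega_{n-2}(C_3(T))$ is a proper closed subvariety of $\overline{C}_3(T)$ of degree bounded in terms of $n$, while the determinant hypersurface is a distinct hypersurface of degree $n$, their intersection is a variety of codimension $\ge 2$ in $\overline{C}_3(T)$ and so contains at most $O(q^{n(n-3)})$ points. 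Comparing this with the $\Theta(q^{n(n-2)})$ points on the determinant hypersurface, for $q$ large there must exist a singular point off $\Omega_{n-2}(C_3(T))$, giving BEL-rank $\le n-1$.

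The main obstacle is ensuring that $\det$ does not vanish identically on $\overline{C}_3(T)$, and more generally controlling the geometry so that the two varieties in question are genuinely distinct and the Lang–Weil estimates apply with explicit enough error terms to yield an effective threshold on $q$. Nonvanishing of $\det|_{\overline{C}_3(T)}$ should follow from the fact that $\overline{C}_3(T)$ contains the invertible Dickson matrix $T$ itself (so the restricted determinant is a nonzero form), after which the counting is routine; but verifying the transversality and the secant-variety degree bound — essentially the same kind of estimate carried out in \cite[Theorem 12]{GowSheekey} — is where the real work lies.
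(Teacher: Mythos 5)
Your overall strategy is the same as the paper's -- reduce via the preceding lemma to finding a singular matrix in $\overline{C}_3(T)$, view the restricted determinant $F(x_1,\ldots,x_n)=\det\left(\sum_i x_i M_T^{\sigma^i}\right)$ as a degree-$n$ form in $n$ variables, and count its zeros over $\Fqn$ via Lang--Weil -- but there is a genuine gap at the central step. You assert that since $F$ is not identically zero, its zero locus is a degree-$n$ hypersurface in $\PG(n-1,q^n)$ carrying roughly $q^{n(n-2)}$ points. This is false in general: Lang--Weil in that form requires an absolutely irreducible component defined over the field in question, and Chevalley--Warning gives nothing here because the degree equals the number of variables. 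A norm form $N_{\F_{q^{n^2}}/\Fqn}$ is a nonzero degree-$n$ form in $n$ variables with \emph{no} nontrivial zeros over $\Fqn$; indeed, when the semifield is a field, $F$ is exactly a norm form over $\Fq$. So non-vanishing of $F$ (which you also justify incorrectly -- $M_T$ itself is neither a Dickson matrix nor necessarily invertible; the correct reason is that the nonzero elements of $C_3(T)$ are invertible Dickson matrices) is not the real obstacle, and your parenthetical appeal to Chevalley--Warning does not repair it.

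The paper's proof is built precisely around closing this gap. It first observes that $F$ is (a scalar multiple of) a polynomial over $\Fq$ with no nontrivial $\Fq$-rational zeros (such a zero would be a singular element of the nonsingular contraction space $C_3(T)$), and concludes by Chevalley--Warning that $F$ is irreducible over $\Fq$. Consequently $F$ factors over $\overline{\F}_q$ into conjugate absolutely irreducible factors of degree $n/s$ defined over $\F_{q^s}$ with $s\mid n$: if $s>1$ the factor has degree $<n$ and is defined over a subfield of $\Fqn$, so Chevalley--Warning over $\Fqn$ produces a nontrivial zero for every $q$; if $s=1$ then $F$ is absolutely irreducible and the effective Lang--Weil bound of Cafure--Matera applies for $q$ large. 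This dichotomy is the key idea missing from your argument. (One point in your favour: you explicitly worry about whether the singular point found lies outside $\Omega_{n-2}(C_3(T))$, which the paper's proof passes over in silence; but your codimension estimate for that step is itself only sketched, and in any case it does not compensate for the missing irreducibility analysis.)
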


\begin{proof}
Let $T$ be a nonsingular tensor representing the semifield.
    Consider the polynomial
    \[
    F(x_1,\ldots,x_n)= \det\left(\sum_{i=0}^{n-1}x_iM_T^{\sigma^i}\right).
    \]
We need to show that there exists a nontrivial zero of this polynomial over $\Fqn$.

    Since $C_3(T)$ is spanned by Dickson matrices, and the determinant of a Dickson matrix lies in $\Fq$, we get that $F$ is a (scalar multiple of a) polynomial in $\Fq[x_1,\ldots,x_n]$. Moreover, $F$ does not possess any nontrivial zeroes of the form $(\alpha,\alpha^q,\ldots,\alpha^{q^{n-1}})$, for otherwise $C_3(T)$ would possess a nonsingular element, contradicting the nonsingularity of the tensor $T$. Hence $F$ is irreducible over $\Fq$.

    Let us consider the factorisation of $F$ over the algebraic closure of $\Fq$. We must have that $F$ possesses an absolutely irreducible factor $G$ of degree $n/s$ defined over a field $\FF_{q^s}$ for some divisor $s$ of $n$. If $s>1$, then by the Chevalley-Warning theorem $F$ would have a nontrivial zero over $\Fqn$, since the degree of $G$ is smaller than the number of variables, and the result follows for any $q$. 

    Suppose finally that $s=1$, that is, $F$ is absolutely irreducible. Then by the effective version of the Lang-Weil bound from \cite{CafureMatera}, $F$ will possess a nontrivial zero over $\Fqn$ provided 
    \[
    q^{n(n-1)} > (n-1)(n-2)q^{n(n-3/2)}+5n^{13/3}q^{n(n-2)}
    \]
    Hence the result follows whenever $q$ is large enough to satisfy this bound.
\end{proof}

Note that the range of applicability of this bound is large; for example, for $n=5$ we require only that $q>11$. Compare this to the result of Menichetti, which says that every semifield of dimension $5$ is equivalent to a field or twisted field (and hence has BEL-rank at most two) provided $q>6296$.

This result also gives a new proof of Dickson's result that every $2$-dimensional semifield is isotopic to a field; a semifield has BEL-rank one if and only if it is isotopic to a field. This in fact follows without appealing to the Lang-Weil bound; all that we require is that every extended subline of $\Sigma$ meets the quadric $\alpha\delta-\beta\gamma=0$ in at least a point. 

% We can also obtain some other immediate bounds in the spirit of Menichetti, who studied the case where $F(x)$ splits into linear factors over $\Fqn$.

% \begin{lemma}
%     Suppose $F(x)$ has as a factor a quadratic form of hyperbolic type defined over $\FF_{q^{n/2}}$. Then the BEL rank is at most $n/2$.
% \end{lemma}

% \begin{proof}
%     A hyperbolic quadric in $V(n,q^n)$ contains subspaces of dimension $n/2$. 
% \end{proof}

\section{The geometry of nonsinglar tensors for $n=2$}\label{section nonsingular tensors geometry}

We now turn our attention to a more detailed study of the nonsingular tensors when $n=2$. This case was studied in detail in the standard model in \cite{LavrauwSheekey2014}. It has been known since Dickson's result that all nonsingular tensors in this case are equivalent; however a deeper understanding of the geometry of this case is vital both for studying higher dimensions, and for studying nonsingular fourfold tensors. As we will see, some very interesting varieties also surprisingly arise from this detailed study.

As shown in the previous sections, it turns out that the most natural contraction to study in the cyclic model is $\tau_2^2(M_T)$. We write
\[
M_T= \npmatrix{\alpha&\beta\\ \gamma&\delta}.
\]
The Dickson matrices in the contraction space are represented by
\[
z^\vee(M_T) = \npmatrix{\alpha z+\delta^q z^q&\gamma z+\beta^qz^q\\ \gamma^qz^q+\beta z&\alpha^q z^q+\delta z}
\]
with corresponding linear maps  $h_z(x)=(\alpha z+\delta^q z^q)x+(\gamma z+\beta^qz^q)x^q$. 
The function $h_z(x)=(\alpha z+\delta^q z^q)x+(\gamma z+\beta^qz^q)x^q$ is nonsingular if and only if $h_z(x)\neq 0$ for $z\neq 0$. Therefore $M_T$ is nonsingular if and only if this holds for any $z\neq 0$, namely this is to say that, $\forall z,x\neq 0$, $(\alpha z+\delta^q z^q)x+(\gamma z+\beta^qz^q)x^q \neq 0$, namely  $-\frac{\alpha z+\delta^q z^q}{\gamma z+\beta^qz^q}\neq x^{q-1}$.
 Since $\{ z^{q-1}| z \in \mathbb{F}_{q^2}\setminus \{0\}\}=\{z\in\mathbb{F}_{q^2}|z^{q+1}=1\}$, this is equivalent to $(\alpha z+\delta^q z^q)^{q+1}\neq(\gamma z+\beta^qz^q)^{q+1}$, $\forall z \neq 0$.

 Expanding and reordering, this is equivalent to 
\begin{equation}
\begin{cases}\label{system}
&Q+H z+Q^q z^2\neq 0 \\
&z^{q+1}=1;
\end{cases}
\end{equation}
where $Q=\alpha\delta-\beta\gamma$ and $H=\alpha^{q+1}-\beta^{q+1}-\gamma^{q+1}+\delta^{q+1}$.
\subsection{Analysis of the System \eqref{system}}
The forms $Q$ and $H$ define respectively an hyperbolic quadric $\cQ^+$ and a non-degenerate Hermitian surface $\cH$.

If $Q=H=0$, namely $P\in\cQ^+\cap\cH$, then this system clearly has solutions, and so the tensor is singular. If $Q=0$ and $H\ne 0$, (namely $P\in\cQ^+\setminus\cH$) then the system has no solutions and the tensor is nonsingular.
Namely
\begin{itemize}
\item if $P\in\cQ^+\cap\cH$ then $P$ is singular;
\item if $P\in\cQ^+\setminus\cH$, then $P$ in nonsingular.
\end{itemize}
Assume now that $Q\ne 0$, namely $P\not\in\cQ^+$.
Let $\Delta=H^2-4Q^{q+1}$ and $\cD$ the associated variety. Then $\Delta(P)\in\mathbb{F}_q$ for any $P\in PG(3,q^2)$. Assume that $\Delta(P)\notin\square_q^\times$. 
Then 
\[\sqrt{\Delta(P)}^q=-\sqrt{\Delta(P)}\] 
and 
\[z^{q+1}=\frac{H(P)^2-\Delta(P)}{4Q(P)^{q+1}}=\frac{H(P)^2-H(P)^2+4Q(P)^{q+1}}{4Q(P)^{q+1}}=1.\]
Therefore, the system has two solutions, and the tensor is singular. Similarly, if $\Delta(P)=0$, then $z^{q+1}=\frac{H(P)^2}{4Q(P)^{q+1}}=1$, and so the system has a unique solution, and the tensor is singular.

On the other hand, if $\Delta(P)\in\square_q^\times$, then 
\[\sqrt{\Delta(P)}^q=\sqrt{\Delta(P)},\]
and so 
\[z^{q+1}=\frac{H(P)^2+\Delta(P)-2H(P)\sqrt{\Delta(P)}}{4Q^{q+1}}.\] Then $z^{q+1}=1$ if and only if $\Delta(P)-H(P)\sqrt{\Delta(P)}=0$. But $\Delta(P)\ne 0$, so $H(P)=\sqrt{\Delta(P)}$, which implies $H(P)^2=\Delta(P)$ and hence $Q(P)=0$, which is against our assumption. Hence the tensor is nonsingular. Summing up, we can state the following result.
\begin{proposition}\label{nonsingularity}
The point $P\in PG(3,q^2)$ is nonsingular if and only if the evaluation of $\Delta$ at $P$ is a non-zero square in $\mathbb{F}_q$, namely if and only if $\Delta(P)\in\square^\times_q$. 
\end{proposition}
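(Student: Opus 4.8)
The plan is to read off nonsingularity directly from the system \eqref{system}: the point $P=\langle M_T\rangle$ is nonsingular precisely when the quadratic $Q+Hz+Q^qz^2=0$ has no root $z$ lying on the norm-one circle $\{z:z^{q+1}=1\}$, and the whole argument is a case analysis governed by $\Delta=H^2-4Q^{q+1}$. The first thing I would record is that $\Delta\in\Fq$: indeed $H=\alpha^{q+1}-\beta^{q+1}-\gamma^{q+1}+\delta^{q+1}$ is a sum of norms and hence fixed by Frobenius, while $Q^{q+1}$ is the norm of $Q$, so both $H$ and $Q^{q+1}$, and therefore $\Delta$, lie in $\Fq$. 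This is exactly what makes the condition ``$\Delta(P)\in\square_q^\times$'' meaningful.

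I would then dispose of the degenerate case $Q=0$ (i.e.\ $P\in\cQ^+$) first, since there the equation is linear, $Hz=0$: if $H=0$ every $z$ is a root and $P$ is singular, whereas if $H\neq 0$ the only root is $z=0$, which is off the circle, so $P$ is nonsingular. As $\Delta=H^2$ here, this matches the claim exactly, distinguishing $\Delta=0$ from $\Delta$ a nonzero square. For the main case $Q\neq 0$ I would use the quadratic formula to write the roots as $z=\tfrac{-H\pm\sqrt{\Delta}}{2Q^q}$ and compute the norm $z^{q+1}=z\cdot z^q$, treating the three possibilities for $\Delta$ separately.

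The decisive point is the Frobenius action on $\sqrt{\Delta}$. If $\Delta$ is a nonsquare of $\Fq^\times$, then $\sqrt{\Delta}\in\Fqt\setminus\Fq$ and $(\sqrt{\Delta})^q=-\sqrt{\Delta}$; substituting, the cross terms cancel and one finds $z^{q+1}=(H^2-\Delta)/4Q^{q+1}=1$, so both roots lie on the circle and $P$ is singular. If $\Delta=0$ there is a double root whose norm is again $H^2/4Q^{q+1}=1$ (using $\Delta=0$), so $P$ is singular. If instead $\Delta\in\square_q^\times$, then $\sqrt{\Delta}\in\Fq$ so $(\sqrt{\Delta})^q=\sqrt{\Delta}$, and the norm of each root equals $(-H\pm\sqrt{\Delta})^2/4Q^{q+1}$; I would then show that $z^{q+1}=1$ forces $\sqrt{\Delta}=\pm H$, i.e.\ $\Delta=H^2$, i.e.\ $Q^{q+1}=0$, contradicting $Q\neq 0$. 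Hence no root lies on the circle and $P$ is nonsingular. Collating the three cases gives exactly the stated equivalence.

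The only subtle step, and the one I would be most careful with, is the Frobenius computation of $z^{q+1}$: everything hinges on the dichotomy that $\sqrt{\Delta}$ is fixed or negated by $x\mapsto x^q$ according to whether $\Delta$ is a square in $\Fq$, together with tracking that $H,\sqrt{\Delta}\in\Fq$ while $(Q^q)^q=Q$. These manipulations are routine once set up, but it is important to verify that each implication is genuinely an equivalence (no root on the circle $\iff$ nonsingular) and that the excluded value $z=0$ never interferes, since $z=0$ can never satisfy $z^{q+1}=1$.
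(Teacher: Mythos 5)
Your proposal is correct and follows essentially the same route as the paper: both arguments reduce nonsingularity to the quadratic $Q+Hz+Q^qz^2=0$ having no root on the norm-one circle, and both resolve this by the case analysis on whether $\Delta=H^2-4Q^{q+1}$ is zero, a nonsquare, or a nonzero square of $\Fq$, using the dichotomy $(\sqrt{\Delta})^q=\pm\sqrt{\Delta}$ to evaluate $z^{q+1}=(H^2-\Delta)/(4Q^{q+1})$. Your explicit handling of the degenerate case $Q=0$ (where $\Delta=H^2$) and the remark that $z=0$ never lies on the circle are minor but welcome additions to what the paper states.
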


The matrices associated to $\cQ$ and $\cH$ commute, therefore \textit{ $\cH$ and $\cQ$ induce commuting polarities $\perp_\cH$ and $\perp_\cQ$, and the fixed points of the collineation $\rho=\perp_\cH\perp_\cQ$ given by the product of the two polarities form a subgeometry $\mathrm{Fix}(\rho)\simeq \PG(3,q)$}.
Since $\rho$ acts as $\rho(\alpha,\beta,\gamma,\delta)=(\delta^q,\gamma^q,\beta^q,\alpha^q)$, the subgeometry fixed by $\rho$ is the one corresponding to the Dickson matrices, namely 
\[
\mathrm{Fix}(\rho)=\Sigma.
\]
Commuting polarities has been studied first by Segre in \cite{Segre1965}, and investigated later by several authors in the context of finite geometry, and they exhibit a number of interesting behaviours. From now on, we denote by $\perp$ the correlation $\perp_\cH$. 
Denote by $\cQ_0$ the intersection of $\cQ$ and the subgeometry $\Sigma$. Then $\cQ_0$ is also the intersection of $\cH$ and $\Sigma$ and is itself a quadric in the subgeometry $\Sigma$.
Since $\cQ_0$ fully contains sublines of $\Sigma$, it is a hyperbolic quadric of $\Sigma$.
We refer to a line meeting $\Sigma$ in a subline as a $\Sigma$-line. Similarly, we refer to a $\Sigma$-line meeting $\cQ_0$ in $1,2$ or $q+1$ points as $\cQ_0$-tangent, $\cQ_0$-secant or $\cQ_0$-generator respectively and a plane meeting $\Sigma$ in a plane which is tangent to $\cQ_0$ as $\cQ_0$-tangent plane. Note that a line of $\Sigma^*$ meeting $\cQ_0$ in $1$ point is not necessarily a $\Sigma$-line.
Observe that since the polarities are commuting, the tangent planes of $\cQ$ and $\cH$ at a point $P$ in their intersection coincide.

The following result is well known and it is a consequence of an easy counting argument.
\begin{result}\label{points are on extended sigma line}
Given a subgeometry $\Sigma\simeq PG(3,q)$ of $PG(3,q^2)$, any point $P$ of $PG(3,q^2)$ is either in $\Sigma$ or belongs to precisely one  $\Sigma$-line.
\end{result}
By Theorem \ref{contraction is subline}, this unique line is the contraction of the point.
A good use of Result \ref{points are on extended sigma line} is to characterize points on particular sets in terms of the type of extended $\Sigma$-lines containing them. A first example is provided for the surface $\cD$ associated with the polynomial $\Delta$.

\begin{proposition}\label{delta properties}
The variety $\cD$ consists of the points of $PG(3,q^2)$ contained on an extended generator or an extended tangent line of $\cQ_0$. Equivalently, $\cD$ consists of points of $\cQ^+\cap\cH$, points of $\Sigma$ and points lying on precisely one $\cQ_0$-tangent plane. In particular, $\cD$ is an instance of the quasi-Hermitian surfaces introduced in \cite{Pavese2015}.
\end{proposition}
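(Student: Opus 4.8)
The plan is to reduce everything to the behaviour of $\Delta=H^2-4Q^{q+1}$ along the extended $\Sigma$-lines, exploiting Result~\ref{points are on extended sigma line}: every point $P$ of $\PG(3,q^2)$ is either in $\Sigma$ or lies on a unique $\Sigma$-line $\overline{\ell}$, whose trace $\ell=\overline{\ell}\cap\Sigma$ is a line of $\Sigma\simeq\PG(3,q)$. (Throughout we assume $q$ odd, as the definition of $\Delta$ requires.) First I would fix two base points $P_0,P_1\in\Sigma$ spanning $\ell$ and write a general point of $\overline{\ell}$ as $R_t=P_0+tP_1$, $t\in\Fq\cup\{\infty\}$, so that $R_t\in\Sigma$ exactly when $t\in\Fq$. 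The whole proposition then becomes a statement about the single-variable behaviour of $\Delta(R_t)$.

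The heart of the argument is the computation of $Q$ and $H$ along $\overline{\ell}$. Writing $a=Q(P_0)$, $c=Q(P_1)$ and $b=B_Q(P_0,P_1)$ for the polarisation of $Q$, a direct check shows that, because $P_0,P_1\in\Sigma$, all three lie in $\Fq$; hence $Q(R_t)=a+bt+ct^2$ has coefficients in $\Fq$. For the Hermitian form I would use that on $\Sigma$ one has $H=2Q$ and that the polarisations of $\cQ$ and $\cH$ agree on pairs of points of $\Sigma$, giving $H(R_t)=2a+b(t+t^q)+2c\,t^{q+1}$. Substituting and simplifying with $s=t+t^q$, $N=t^{q+1}$ yields the key identity
\[
\Delta(R_t)=(b^2-4ac)\,(s^2-4N)=(b^2-4ac)\,(t-t^q)^2 .
\]
Now $(t-t^q)^2=0$ iff $R_t\in\Sigma$, and for $t\notin\Fq$ it is a nonzero non-square of $\Fq$ (since $t-t^q\notin\Fq$), while $b^2-4ac$ is the discriminant of the binary form $Q|_\ell$ and therefore records whether $\ell$ is a generator ($Q|_\ell\equiv0$), tangent ($b^2-4ac=0$, $Q|_\ell\not\equiv0$), secant (nonzero square), or external (non-square) with respect to $\cQ_0$. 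Reading off the vanishing of $\Delta$ gives at once that $R_t\in\cD$ precisely when $\ell$ is a generator or a tangent line; together with $\Sigma\subseteq\cD$ (where $\Delta\equiv0$ since $H=2Q\in\Fq$ there) this establishes the first characterisation.

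To obtain the second characterisation I would reconcile the two descriptions line type by line type. For extended generators, $Q|_\ell\equiv0$ forces $Q(R_t)=H(R_t)=0$ for all $t$, so these are exactly points of $\cQ^+\cap\cH$; conversely, a point of $\cQ^+\cap\cH$ off $\Sigma$ has both $t$ and $t^q$ as roots of $a+bX+cX^2$ together with $\Delta=0$, which forces $a=b=c=0$, i.e. $\ell$ a generator. It remains to identify the off-$\Sigma$ points of extended tangent lines with the points on precisely one $\cQ_0$-tangent plane. Here the clean observation is that, writing the tangent plane at $X\in\cQ_0$ as $\overline{\pi}_X=X^{\perp_\cQ}$, a point $R_t$ lies on $\overline{\pi}_X$ iff $X\in R_t^{\perp_\cQ}$, and as $R_t$ runs over $\overline{\ell}$ these polar planes form the pencil through the common line $\ell^{\perp_\cQ}$; because $B_Q(P_i,X)\in\Fq$ for $X\in\cQ_0\subseteq\Sigma$, for $t\notin\Fq$ one gets $\cQ_0\cap R_t^{\perp_\cQ}=\cQ_0\cap\ell^{\perp_{\cQ_0}}$. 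Thus the number of tangent planes through $R_t$ equals $|\cQ_0\cap\ell^{\perp_{\cQ_0}}|$, and the standard polar correspondence for a hyperbolic quadric in $\PG(3,q)$ (secant $\leftrightarrow$ external, tangent $\leftrightarrow$ tangent, generators self-polar) turns this into $0,1,2$ or $q+1$ according as $\ell$ is secant, tangent, external, or a generator. In particular it equals $1$ exactly on the tangent lines, matching the first characterisation. Finally, comparing the resulting description with the construction of \cite{Pavese2015} identifies $\cD$ as a member of that family of quasi-Hermitian surfaces, giving the last assertion.

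I expect the main obstacle to be the tangent-plane count in the last step: setting up the pencil-of-polar-planes reduction correctly and invoking the polar-line correspondence for $\cQ_0$ in all cases (including the degenerate generator and tangent configurations), since this is what converts the purely computational characterisation via $\Delta$ into the geometric one involving tangent planes. The form computation of $\Delta(R_t)$ is routine but must be carried out carefully, in particular the fact that the polarisations of $Q$ and $H$ coincide on $\Sigma$.
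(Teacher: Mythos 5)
Your argument is correct in substance but takes a genuinely different route from the paper's. The paper obtains the first characterisation essentially for free from the already-completed analysis of System \eqref{system}: the solutions $z$ of that system correspond to the points of $\cQ_0$ on the contraction line of $P$, so $\Delta(P)=0$ occurs exactly when that line meets $\cQ_0$ in one or $q+1$ points, i.e.\ is a tangent or a generator. You instead re-derive this from scratch by parametrising an extended $\Sigma$-line and proving the factorisation $\Delta(R_t)=(b^2-4ac)(t-t^q)^2$; I checked the expansion and it is correct. This identity is arguably the cleaner statement: since $(t-t^q)^2$ is a nonzero non-square of $\Fq$ for $t\notin\Fq$, it simultaneously recovers Proposition \ref{nonsingularity} ($\Delta(R_t)\in\square_q^\times$ iff $b^2-4ac$ is a non-square, iff $\ell$ is external). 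For the tangent-plane statement the paper argues by contradiction (two tangent planes $X^\perp,Y^\perp$ through $P\notin\Sigma$ force $P$ onto the $\Sigma$-line $X^\perp\cap Y^\perp$, which is a generator or a $2$-secant, not a tangent), whereas you compute the exact number of tangent planes through an off-$\Sigma$ point as $\lvert\cQ_0\cap\ell^{\perp_{\cQ_0}}\rvert$ via the pencil of polar planes; that reduction is sound and in effect reproves most of Lemma \ref{number of tg planes per point} as a byproduct.

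One slip: the ``standard polar correspondence'' you invoke is the one for an \emph{elliptic} quadric. For a hyperbolic quadric in $\PG(3,q)$ the polarity maps secant lines to secant lines and external lines to external lines: the polar of the secant through $X,Y\in\cQ_0$ is $X^\perp\cap Y^\perp$, which meets $\cQ_0$ in the two points where the generators through $X$ meet the opposite-ruling generators through $Y$ (the line counts $q^2(q+1)^2/2$ secants versus $q^2(q-1)^2/2$ externals also rule out a swap). So the tangent-plane counts in the secant and external cases should be $2$ and $0$ respectively, not $0$ and $2$ --- consistent with Lemma \ref{number of tg planes per point}, where points on extended secants ($\cH_2$, $\tcS^2$) lie on two tangent planes and points on extended external lines ($\cH_1$, $\tcS^1$, $\Tilde{\cQ}$) on none. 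This does not damage the present proposition, since only the values $1$ (tangent) and $q+1$ (generator) are used and both are unaffected, but the intermediate claim as written is false and would propagate errors if reused.
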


\begin{proof}
From the analysis of the System \eqref{system}, a point $P$ belong to $\cD$ if and only if its contraction meets $\cQ_0$ in either one or $q+1$ points. Rephrasing, the contraction is either a $\cQ_0$-tangent or a generator of $\cQ_0$. 
It remains to prove that a point $P\notin\Sigma$ on an extended $\cQ_0$-tangent line lies on precisely one $\cQ_0$-tangent plane. If $P$ belongs to $Q^\perp$ and $R^\perp$, with $Q,R\in\cQ_0$, then the point $P$ is on the line $Q^\perp\cap R^\perp$, which is a $2$-secant of $\cQ_0$. This is a contradiction with Result \ref{points are on extended sigma line}.
\end{proof}

As a consequence, $\Delta=0$ is an equation for the quasi-Hermitian variety constructed in \cite{Pavese2015}, in dimension three. As we will explain in Section \ref{section delta equation}, the quasi-Hermitian varieties constructed in \cite{Pavese2015} always admit an equation of this form.

\begin{proposition} \label{intersection quadric and hermitian }
The intersection of $\cQ$ and $\cH$ consists of $\cQ_0$ and the points contained in $\cQ_0$-generators.
\end{proposition}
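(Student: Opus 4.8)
The plan is to first reduce to the already-understood variety $\cD$, then use the contraction/$\Sigma$-line dictionary to split the argument into two cases according to how the contraction of a point meets $\cQ_0$. Since $\Delta=H^2-4Q^{q+1}$, any point $P$ with $Q(P)=H(P)=0$ also satisfies $\Delta(P)=0$, so $\cQ\cap\cH\subseteq\cD$. By Proposition~\ref{delta properties} (via Theorem~\ref{contraction is subline} and Result~\ref{points are on extended sigma line}), every point $P$ of $\cQ\cap\cH$ therefore lies on an extended generator or an extended tangent line of $\cQ_0$; equivalently, the contraction of $P$ (the unique $\Sigma$-line through it, or $P$ itself if $P\in\Sigma$) meets $\cQ_0$ in $q+1$ points or in exactly one point. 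It remains to decide, in each case, exactly which of these points actually lie in $\cQ\cap\cH$.

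The crux is to show that every extended generator $\bar\ell$ of $\cQ_0$ is contained in $\cQ\cap\cH$. Containment in $\cQ$ is routine: $\bar\ell$ is the extension of a line $\ell\subseteq\cQ_0\subseteq\cQ$, and the quadratic form $Q$ restricted to $\bar\ell$ vanishes on the $q+1\geq 3$ points of $\ell$, hence identically. For $\cH$ I would argue geometrically. Fix $R\in\ell\subseteq\cQ_0$. Because the polarities commute, the tangent planes of $\cQ$ and $\cH$ at $R$ coincide, and the generator $\bar\ell$ of $\cQ$ through $R$ lies in the common tangent plane $T_R\cQ=T_R\cH$. Now the section of $\cH$ by $T_R\cH$ is a degenerate Hermitian curve consisting of $q+1$ concurrent lines through $R$, so any line of $T_R\cH$ through $R$ either meets $\cH$ only at $R$ or is one of these generators of $\cH$; since $\bar\ell$ already contains the $q+1$ points of $\ell\subseteq\cH$, it must be a generator of $\cH$, i.e.\ $\bar\ell\subseteq\cH$. (As a self-contained alternative, parametrising a generator of $\cQ$ as $\{(as,at,bs,bt):(s:t)\in\PG(1,q^2)\}$ gives $H=(a^{q+1}-b^{q+1})(s^{q+1}-t^{q+1})$, which vanishes identically precisely when $a^{q+1}=b^{q+1}$, the condition for the line to be a $\Sigma$-line, i.e.\ an extended generator of $\cQ_0$.) In particular $\cQ_0\subseteq\cQ\cap\cH$, as every point of $\cQ_0$ lies on a generator of each of the two reguli.

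For the tangent case, suppose the contraction of $P\in\cQ\cap\cH$ is an extended tangent line $\bar m$ of $\cQ_0$ at a point $R$. A line of $\Sigma$ meeting $\cQ_0$ in a single point is genuinely tangent, so $Q$ restricted to $\bar m$ has a double root at $R$; hence $\bar m\cap\cQ=\{R\}$ and the only point of $\bar m$ possibly in $\cQ\cap\cH$ is $R\in\cQ_0$. Thus every point of $\cQ\cap\cH$ lies on an extended generator of $\cQ_0$, and combined with the reverse inclusion from the previous paragraph this yields $\cQ\cap\cH=\cQ_0\cup\{\text{points on extended generators of }\cQ_0\}$, as claimed. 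I expect the only genuine obstacle to be the $\cH$-containment of extended generators: a Hermitian form vanishing on a Baer subline need \emph{not} vanish on the whole extended line, so one cannot simply extend by degree; it is precisely the coinciding-tangent-plane property of the commuting polarities (or, concretely, the factorisation of $H$ above) that rules out the secant possibility and forces $\bar\ell$ into $\cH$.
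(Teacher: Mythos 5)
Your proof is correct, and it does strictly more than the paper's. The paper's proof consists only of the forward inclusion --- an extended generator $\ell$ of $\cQ_0$ is a generator of $\cQ$, and ``since the polarities commute'' it is a generator of $\cH$, hence $\ell\subseteq\cQ\cap\cH$ --- stated in three lines, with the reverse inclusion left entirely implicit (presumably delegated to the earlier analysis of System \eqref{system}). Your tangent-plane/Hermitian-cone argument, or equivalently the factorisation $H=(a^{q+1}-b^{q+1})(s^{q+1}-t^{q+1})$ on a generator of $\cQ$, is precisely the justification that the paper's key sentence needs: a generic generator of $\cQ$ is \emph{not} contained in $\cH$, and what forces $\bar\ell$ into $\cH$ is that it lies in the common tangent plane $T_R\cQ=T_R\cH$ \emph{and} already meets $\cH$ in the $q+1$ points of $\ell\subseteq\cQ_0$, which excludes the only other option $\bar\ell\cap\cH=\{R\}$; you identify this correctly as the crux. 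Your reverse inclusion, obtained via $\cQ\cap\cH\subseteq\cD$ and Proposition \ref{delta properties} and then eliminating the extended-tangent case through $\bar m\cap\cQ=\{R\}$, is sound and fills a genuine gap in the paper's write-up. The one step there worth making explicit is why $Q|_{\bar m}$ has a double root at $R$: since $m$ meets $\cQ_0$ in the single point $R$ it lies in the tangent plane of $\cQ_0$ at $R$ inside $\Sigma$, and this tangent plane is the trace of $R^{\perp_\cQ}$, so $\bar m\subseteq R^{\perp_\cQ}$ and the restricted form has a singular zero at $R$, hence is a square of a linear form; this upgrades ``genuinely tangent'' from a statement about the subline to one about the full line over $\F_{q^2}$.
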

\begin{proof}
A $\cQ_0$-generator $\ell$ is a $\Sigma$-line meeting $\cQ_0$ in $q+1$ points. Therefore, $\ell$ must be a generator of $\cQ$ as well, implying $\ell\subseteq P^{\perp_\cQ}\cap\cQ$ for some $P\in\cQ$. Since the polarities commute this means that $\ell$ is a generator of $\cH$ as well, henceforth $\ell\subseteq \cQ\cap\cH$. 
\end{proof}

Let $\cT$ denote the set of points on $\cQ_0$-generators but not on $\cQ_0$.
By Proposition \ref{intersection quadric and hermitian }, we can write 
\[\cQ^+\cap\cH=\cQ_0\sqcup\cT;\]
where $\sqcup$ is the disjoint union symbol. 
We will also use the notation $\Tilde{D}$ to denote the set of points on $\cQ_0$-tangent lines but not in $\Sigma$, and $\Tilde{\Sigma}$ to denote points in $\Sigma$ but not in $\cQ_0$. With this notation we can write 
\[\cD=\cQ_0\sqcup\cT\sqcup\tilde{\Sigma}\sqcup\Tilde{\cD}.\]

The following straightforward result characterises the $\Sigma$-lines defined by points not on $\cQ_0$-tangent planes.

\begin{proposition}
For a point $P\in PG(3,q^2)\setminus\Sigma$, the following two properties are equivalent:
\begin{itemize}
\item $P\in\ell$ with $\ell$ a $\Sigma$-line external to $\cQ_0$;
\item $P$ is not on a $\cQ_0$-tangent plane.
\end{itemize}
\end{proposition}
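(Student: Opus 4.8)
The plan is to reduce the statement, via the geometry of the subgeometry $\Sigma$, to a classical fact about a hyperbolic quadric and its tangent planes in $\mathrm{PG}(3,q)$. Fix $P\in\mathrm{PG}(3,q^2)\setminus\Sigma$. By Result~\ref{points are on extended sigma line} there is a unique $\Sigma$-line $\ell$ through $P$, and for $n=2$ Theorem~\ref{contraction is subline} identifies it as $\ell=\overline{C}_3(P)=\langle P,P^\sigma\rangle$, with $m:=\ell\cap\Sigma$ the contraction subline; recall that on $\Sigma^*$ the collineations $\sigma$ and $\rho$ coincide, both acting as $(\alpha,\beta,\gamma,\delta)\mapsto(\delta^q,\gamma^q,\beta^q,\alpha^q)$, so $\ell=\langle P,\rho(P)\rangle$. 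Since $\cQ_0\subseteq\Sigma$, the line $\ell$ is external to $\cQ_0$ if and only if $m$ is. For $R\in\cQ_0$ I write $\pi_R$ for the tangent plane to $\cQ_0$ at $R$ inside $\Sigma$ and $\overline{\pi}_R$ for its extension to $\mathrm{PG}(3,q^2)$: these extended planes are exactly the $\cQ_0$-tangent planes, and each is $\rho$-invariant, being the extension of a subspace of $\Sigma=\mathrm{Fix}(\rho)$.

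The crucial reduction is the equivalence $P\in\overline{\pi}_R\iff m\subseteq\pi_R$. Indeed, if $P\in\overline{\pi}_R$ then $\rho(P)\in\overline{\pi}_R$ as well, whence $\ell=\langle P,\rho(P)\rangle\subseteq\overline{\pi}_R$ and therefore $m=\ell\cap\Sigma\subseteq\overline{\pi}_R\cap\Sigma=\pi_R$; conversely $m\subseteq\pi_R$ gives $\ell=\overline{m}\subseteq\overline{\pi}_R\ni P$. Consequently $P$ lies on no extended $\cQ_0$-tangent plane if and only if $m$ lies in no tangent plane of $\cQ_0$, and the proposition becomes equivalent to the following statement inside $\Sigma\simeq\mathrm{PG}(3,q)$: a line $m$ is external to the hyperbolic quadric $\cQ_0$ if and only if $m$ is contained in no tangent plane of $\cQ_0$.

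For this last equivalence one implication is immediate: if $m\subseteq\pi_S$ for some $S\in\cQ_0$, then $\pi_S\cap\cQ_0$ is a pair of generators through $S$, and $m$, being a line of the plane $\pi_S$, must meet each of them, so $m$ meets $\cQ_0$ and is not external. For the converse I would show that every line meeting $\cQ_0$ lies in some tangent plane, treating the three possibilities for $m\cap\cQ_0$ separately. If $m$ is a generator it lies in $\pi_R$ for every $R\in m$; if $m$ is tangent at $R$ then $m\subseteq\pi_R$, since the contact point accounts for a double zero of the restricted form. The genuinely delicate case, and the main obstacle, is the secant one, where $m\cap\cQ_0=\{R,R'\}$: here I would use the two reguli of $\cQ_0$, taking the generator through $R$ in the first regulus and the generator through $R'$ in the second. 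These lie in different reguli, so they meet in a single point $S\in\cQ_0$, and $S\in\pi_R\cap\pi_{R'}$; since for the polarity of $\cQ_0$ one has $S\in\pi_R\cap\pi_{R'}$ if and only if $R,R'\in\pi_S$, this yields $m=\langle R,R'\rangle\subseteq\pi_S$. One checks $S\ne R,R'$ using that $m$ is not a generator. Assembling these cases gives the $\Sigma$-statement, and combined with the reduction above it proves the proposition.
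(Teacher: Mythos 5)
Your proof is correct and follows essentially the same route as the paper's: both reduce the claim, via the uniqueness of the $\Sigma$-line through $P$, to the dichotomy that a line of $\Sigma$ is external to $\cQ_0$ if and only if it lies in no tangent plane of $\cQ_0$. The only differences are that you justify the key reduction ($P$ on an extended tangent plane iff its subline lies in that tangent plane) via $\rho$-invariance rather than the paper's direct incidence argument for extended sublines and subplanes, and that you prove in full (including the regulus argument for secants) the classical facts about tangent, secant and external lines that the paper simply cites.
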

\begin{proof}
Since a $\Sigma$-line $\ell$ external to $\cQ_0$ is in particular a $\Sigma$-line, it must meet any $\cQ_0$-tangent plane, which is a $\Sigma$-plane, in a point of $\Sigma$. As a consequence, $\ell$ doesn't share any point outside $\Sigma$ with $\cQ_0$-tangent planes. The result follows from Result \ref{points are on extended sigma line}, the fact that $\cQ_0$-tangent lines, $\cQ_0$-secants and $\Sigma$-lines external to $\cQ_0$ induce a partition on the set of lines of $\Sigma$, and $\cQ_0$-tangent lines and $\cQ_0$-secants are contained in $\cQ_0$-tangent planes.
\end{proof}
Putting together Proposition \ref{nonsingularity} and Result \ref{points are on extended sigma line}, we obtain the following geometric characterization of nonsingularity.
\begin{lemma}\label{nonsingular iff external or no q0 tg plane}
A point $P\notin\Sigma$ is nonsingular if and only if it lies on an $\Sigma$-line external to $\cQ_0$.  Equivalently, $P\notin\Sigma$ is nonsingular if and only if it does not lie on a $\cQ_0$-tangent plane.
\end{lemma}
We end this section recalling some well known properties of the group of collineations of $PG(3,q^2)$ stabilizing $\cQ_0$. This group is isomorphic to $PCGO^+(4,q)$, but we point out that, since $\Sigma$ is not the canonical subgeometry, the matrix representation of $PCGO^+(4,q)$ must be modified accordingly. 
\begin{result}\cite[Theorem 15.3.12]{FPSO3D}\label{group properties}
The group $G_0\simeq PCGO^+(4,q)\geq PSO^+(4,q)$ stabilizing the subquadric $\cQ_0$ acts transitively on points of $\cQ_0$ and on points of $\Sigma\setminus \cQ_0$. Moreover, $G_0$ acts transitively on the $\Sigma$-lines tangents, external or secant to $\cQ_0$.
\end{result}

\section{The Geometry of Nonsingular Fourfold Tensors}\label{sec:fourfold tensors}

A fourfold tensor in the tensor product of four copies of $\Fqn$ is nonsingular if and only if its contraction space forms an $n$-dimensional vector subspace of nonsingular threefold tensors. Due to our new understanding of threefold tensors and their contractions, we can study nonsingular fourfold tensors via geometric considerations of subspaces in $\PG(n^2-1,q^n)$.

\begin{lemma}
    The contraction of a nonsingular $2\times 2\times 2\times 2$ tensor defines either a nonsingular point of $\PG(3,q^2)$, or a subline of $\PG(3,q^2)$ consisting of nonsingular points.
\end{lemma}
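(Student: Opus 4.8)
The plan is to reduce the statement to elementary linear algebra over the extension $\Fqt/\Fq$, feeding in the characterisation of nonsingular fourfold tensors recalled at the start of this section. First I would fix a factor of $T$ and let $W$ denote the image of $T$ under contraction in that factor. By the characterisation, nonsingularity of $T$ forces $W$ to be a $2$-dimensional $\Fq$-subspace of the threefold tensor space, all of whose nonzero elements are nonsingular; under $\Phi_c$ we realise $W$ as a $2$-dimensional $\Fq$-subspace of $M_2(\Fqt)$. Since every nonzero element of $W$ is nonsingular, in particular nonzero, the assignment $w\mapsto\langle w\rangle$ is defined on $W\setminus\{0\}$ and produces a set of points of $\Sigma^*=\PG(3,q^2)$; the whole lemma is the determination of this set.

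Next I would fix an $\Fq$-basis $\{A,B\}$ of $W$ and distinguish two cases according to the $\Fqt$-linear dependence of $A$ and $B$. If $A,B$ are $\Fqt$-dependent, then $B=\lambda A$ with $\lambda\in\Fqt\setminus\Fq$ (the coefficient cannot lie in $\Fq$, since $A,B$ are $\Fq$-independent), so every nonzero element of $W$ is an $\Fqt$-multiple of $A$ and the projectivisation of $W$ collapses to the single point $\langle A\rangle$. If instead $A,B$ are $\Fqt$-independent, then the $q+1$ points $\langle \mu A+\nu B\rangle$ with $[\mu:\nu]\in\PG(1,q)$ are pairwise distinct, all lie on the line $\langle A,B\rangle_{\Fqt}$, and form precisely the image of $\PG(1,q)$ under the coordinatisation given by $A,B$; that is, a subline. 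This is exactly the stated dichotomy.

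Finally I would check that each of the resulting points is nonsingular. Any such point admits a representative $\mu A+\nu B\in W\setminus\{0\}$ with $\mu,\nu\in\Fq$, which is a nonsingular threefold tensor by hypothesis. The one point that requires care --- and which I expect to be the only non-formal step --- is that nonsingularity must be a property of the point rather than of the chosen representative: in the subline case a point has further representatives that are $\Fqt$-multiples of $w$ and need not lie in $W$. This is settled by noting that nonsingularity is governed by Proposition \ref{nonsingularity}, and that scaling a representing matrix by $\lambda\in\Fqt^\times$ sends $Q\mapsto \lambda^2 Q$ and $H\mapsto \lambda^{q+1}H$, hence $\Delta\mapsto \lambda^{2(q+1)}\Delta=(\lambda^{q+1})^2\,\Delta$; since $\lambda^{q+1}\in\Fq^\times$, the factor $(\lambda^{q+1})^2$ is a nonzero square in $\Fq$, so membership of $\Delta(P)$ in $\square_q^\times$ is preserved. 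Thus nonsingularity descends to the projective points, and in both cases the lemma follows.
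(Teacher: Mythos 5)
Your proof is correct. The paper states this lemma without proof, so there is no argument to compare against; your dichotomy (the $2$-dimensional $\Fq$-subspace $W=\Phi_c(C_i(U))$ projectivises to a single point or a Baer subline according to whether an $\Fq$-basis of $W$ is $\F_{q^2}$-dependent or not) is the natural intended argument, and your extra check that nonsingularity is well defined on projective points --- via the scaling behaviour $\Delta\mapsto(\lambda^{q+1})^2\Delta$ in Proposition~\ref{nonsingularity}, or equivalently via the isotopism $(1,1,\lambda\cdot)$ --- correctly closes the one non-formal step.
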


Clearly in the first case, all such points are equivalent. In the second case, we can subdivide the possibilities further.

\begin{lemma}
Suppose $U$ is a nonsingular $2\times 2\times 2\times 2$ tensor whose contraction defines a subline $\langle S,T\rangle_{\Fq}$ of $\PG(3,q^2)$ consisting of nonsingular points. Then the set of contraction spaces $C_3(U) ;= \{C_3(\lambda S+\mu T):\lambda,\mu\in \Fq\}$ is a set of sublines of $\Sigma$ disjoint from $\cQ_0$. Moreover, the dimension $d(U) := \dim\langle C_3(U)\rangle_{\Fq}$ is invariant under $\GL^4$-equivalence.
\end{lemma}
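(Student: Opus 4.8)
The plan is to prove the two assertions in turn, establishing the geometric description of the contraction sublines first and then deducing the invariance of $d(U)$ from the behaviour of $C_3$ under each of the four factor actions. For the first assertion, I would begin by recording that a nonsingular point never lies in $\Sigma$: if $P\in\Sigma$ then $M_P$ is a Dickson matrix, so $M_P^\sigma=M_P$ and the third contraction collapses, $z^\vee(M_P)=\sum_j z^{q^j}M_P^{\sigma^j}=\Tr(z)M_P$, whence the contraction space is one-dimensional and $P$ is singular (equivalently, a direct substitution gives $H=2Q$ and $\Delta(P)=0$, so Proposition \ref{nonsingularity} excludes $P$). Since each $\lambda S+\mu T$ is nonsingular by hypothesis it lies outside $\Sigma$, so by Result \ref{points are on extended sigma line} it lies on a unique $\Sigma$-line; by Theorem \ref{contraction is subline} this line is $\overline{C}_3(\lambda S+\mu T)$ and its $\Sigma$-part is the subline $C_3(\lambda S+\mu T)$. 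The nonsingularity characterisation at the end of Section \ref{section nonsingular tensors geometry} says exactly that this extended $\Sigma$-line is external to $\cQ_0$, so the subline $C_3(\lambda S+\mu T)$ is disjoint from $\cQ_0$, as required.

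For the invariance of $d(U)$ I would reduce to the four factor-wise generators of $\GL(V)^{\times 4}$ and track the whole collection $C_3(U)$ of sublines, using throughout that nonsingularity, and hence applicability of the hypothesis, is preserved by equivalence. The action on the fourth factor only reparametrises the dual space along which we contract, so it fixes the contraction space $\langle S,T\rangle_{\Fq}$ as a set of threefold tensors and leaves $C_3(U)$ unchanged. The action on the third factor is the map $(1,1,h_3)$ of Section \ref{sec:thirdgl} applied to each threefold contraction; by the first Lemma there we have $C_3(P^{(1,1,h_3)})=C_3(P)$ for every $P$, and since the points of the new contraction space are precisely the $P^{(1,1,h_3)}$, the family of contraction sublines, and therefore $\langle C_3(U)\rangle_{\Fq}$, is literally unchanged. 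In both of these cases $d(U)$ is manifestly preserved.

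The remaining and most delicate case is that of the first two factors, acting on each threefold contraction by $M_P\mapsto D_{h_1}M_PD_{h_2}^t$. The key point is that $D_{h_1}$ and $D_{h_2}$, being Dickson matrices, are fixed by the semilinear ring automorphism $\sigma$, which moreover commutes with transposition; hence $(D_{h_1}M_PD_{h_2}^t)^\sigma=D_{h_1}M_P^\sigma D_{h_2}^t$ and so
\[
\overline{C}_3\!\left(P^{(h_1,h_2,1)}\right)=\big\langle D_{h_1}M_PD_{h_2}^t,\;D_{h_1}M_P^\sigma D_{h_2}^t\big\rangle=D_{h_1}\,\overline{C}_3(P)\,D_{h_2}^t .
\]
Using that for $n=2$ both the product and the transpose of Dickson matrices are again Dickson, the linear map $X\mapsto D_{h_1}XD_{h_2}^t$ restricts to an $\Fq$-linear bijection of $\D_2(\Fq)$, inducing a single collineation $g$ of $\Sigma\cong\PG(3,q)$ that sends $C_3(P)=\overline{C}_3(P)\cap\Sigma$ onto $C_3(P^{(h_1,h_2,1)})$. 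Thus $g$ carries the entire configuration of contraction sublines of $U$ onto that of $U^{(h_1,h_2,1,1)}$; being induced by a linear bijection it preserves $\Fq$-dimensions of spans, so $d(U^{(h_1,h_2,1,1)})=d(U)$, and combining the four cases yields invariance under all of $\GL(V)^{\times 4}$. The hard part is precisely this $\sigma$-equivariance and Dickson-stability computation underpinning the collineation $g$; once it is in place the rest is bookkeeping.
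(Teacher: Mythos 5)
The paper states this lemma without any proof, so there is nothing to compare your argument against; judged on its own, your proof is correct and complete. The first half correctly chains together the results the paper has already established: $\Delta$ vanishes on $\Sigma$ (your computation $H=2Q$, $Q\in\Fq$, hence $\Delta=4Q^2-4Q^{q+1}=0$ is right), so nonsingular points lie outside $\Sigma$, Result \ref{points are on extended sigma line} and Theorem \ref{contraction is subline} identify the unique $\Sigma$-line through each $\lambda S+\mu T$ with $\overline{C}_3(\lambda S+\mu T)$, and the nonsingularity characterisation forces that line to be external to $\cQ_0$. For the invariance of $d(U)$, the factor-by-factor reduction is the natural route, and the genuinely substantive step --- that $\sigma$ is a (transpose-compatible) ring automorphism of $M_2(\Fqt)$ fixing Dickson matrices, so that $X\mapsto D_{h_1}XD_{h_2}^t$ commutes with $\sigma$, stabilises $\D_2(\Fq)$, and induces a collineation of $\Sigma$ carrying one configuration of contraction sublines to the other --- checks out. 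One small imprecision: your parenthetical claim that "applicability of the hypothesis is preserved by equivalence" is not literally true, since the subline-versus-point dichotomy of the preceding lemma is not stable under the third-factor action (a contraction space spanning a single point of $\PG(3,q^2)$ can become a subline after applying $(1,1,h)$, and vice versa when $d=2$). This does not damage your argument, because your actual computation shows the \emph{set} of contraction sublines $\{C_3(P):P\in W\setminus\{0\}\}$ is literally unchanged by $(1,1,h,1)$ regardless of which case one is in, so $d$ is still well defined and invariant; but the side remark should be rephrased or dropped.
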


In \cite{Coolsaet2013}, \cite{Coolsaet2}, this number $d$ was called the {\it $12$-rank}. In those papers, the case where $d=2,3$ were considered. In these articles the standard model was used, where connections with disjoint hyperbolic quadrics were observed arising from an algebraic analysis. Using our cyclic model, we obtain this and further results from a geometric analysis.

\begin{lemma}
    Suppose $U$ is as in the previous lemma, and suppose $d(U)=4$. Then the set of contraction spaces $C_3(U)$ is a regulus in a hyperbolic quadric disjoint from $\cQ_0$, and the subline $\langle S,T\rangle$ is a subline of a line in its opposite regulus of the extension of this quadric to $\Sigma^*$. Conversely, if $\cQ'$ is any hyperbolic quadric of $\Sigma$ disjoint from $\cQ_0$, then any line in the extension of this quadric to $\Sigma^*$ meets the extension of one of the reguli in this quadric in a subline which is the contraction of a nonsingular fourfold tensor.
\end{lemma}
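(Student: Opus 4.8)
The plan is to prove both directions by setting up coordinates on $\Sigma\simeq\PG(3,q)$ adapted to the contraction data, exploiting the $\Fq$-linearity of the third contraction in its first two slots together with the characterisation established earlier that a point of $\Sigma^*\setminus\Sigma$ is nonsingular precisely when it lies on an extended $\Sigma$-subline external to $\cQ_0$. Throughout I use that $\overline{C}_3(P)=\langle P,P^\sigma\rangle$ is the unique $\Sigma$-line through $P\notin\Sigma$, with $\overline{C}_3(P)\cap\Sigma=C_3(P)$.

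\textbf{Forward direction.} First I would use $d(U)=4$. Fixing an $\Fq$-basis $\{\zeta_0,\zeta_1\}$ of $\Fqt$, put $e_1=\zeta_0^\vee(S),\ e_2=\zeta_1^\vee(S),\ e_3=\zeta_0^\vee(T),\ e_4=\zeta_1^\vee(T)$; these are Dickson matrices, hence points of $\Sigma$, and $d(U)=4$ forces them to be $\Fq$-independent, so a projective frame of $\Sigma$. Since $z^\vee(\lambda S+\mu T)=\lambda z^\vee(S)+\mu z^\vee(T)$ for $\lambda,\mu\in\Fq$, the subline $C_3(\lambda S+\mu T)$ equals $\langle\lambda e_1+\mu e_3,\ \lambda e_2+\mu e_4\rangle_{\Fq}$. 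In the coordinates $(x_1,x_2,x_3,x_4)$ dual to $e_1,\dots,e_4$, the general point of this subline is $(a\lambda,b\lambda,a\mu,b\mu)$, which satisfies $x_1x_4-x_2x_3=0$; thus as $[\lambda:\mu]$ runs over $\PG(1,q)$ these $q+1$ sublines form exactly one regulus $R_1$ of the hyperbolic quadric $\cQ'\colon x_1x_4-x_2x_3=0$. Because the lines of a regulus partition the points of its quadric, and each line of $R_1$, being the contraction $C_3(P)$ of a nonsingular point $P$, is a $\Sigma$-subline external to $\cQ_0$, I conclude $\cQ'\cap\cQ_0=\emptyset$.

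\textbf{Locating $\langle S,T\rangle$.} Inverting the pair $e_1=\zeta_0 S+\zeta_0^qS^\sigma$, $e_2=\zeta_1 S+\zeta_1^qS^\sigma$ (the Vandermonde-type determinant $\zeta_0\zeta_1^q-\zeta_1\zeta_0^q$ is nonzero) gives $S=c(\zeta_1^q,-\zeta_0^q,0,0)$, and likewise $T=c(0,0,\zeta_1^q,-\zeta_0^q)$, for a nonzero scalar $c$. A one-line check shows every point of $\langle S,T\rangle_{\Fqt}$ lies on $\cQ'$ and that this $\Fqt$-line is the line $m_{[\zeta_1^q:-\zeta_0^q]}$ of the opposite regulus $\overline{R_2}$ of $\overline{\cQ'}$; since $\zeta_1/\zeta_0\notin\Fq$ this is an imaginary line, and the subline $\langle S,T\rangle_{\Fq}$ sits inside it, proving the first assertion.

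\textbf{Converse.} Given $\cQ'$ disjoint from $\cQ_0$, I choose coordinates realising its two reguli $R_1,R_2$ as above and take any line $m$ of $\overline{\cQ'}$, say $m=\langle ae_1+be_2,\ ae_3+be_4\rangle_{\Fqt}\in\overline{R_2}$. Since lines in opposite reguli meet in one point, the intersection of $m$ with the extension of each line of $R_1$ is the single point $(a\lambda,b\lambda,a\mu,b\mu)$ with $[\lambda:\mu]\in\PG(1,q)$; as $[\lambda:\mu]$ varies these constitute a subline of $m$, namely $m\cap X$ where $X$ is the extension of $R_1$. Each such point lies on an extended $R_1$-line, which is external to $\cQ_0$, so provided $m\not\subseteq\Sigma$ the point avoids $\Sigma$ and is nonsingular. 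The contractions of these points are precisely the lines of $R_1$, whose $\Fq$-span is all of $\Sigma$ (a hyperbolic quadric spans $\PG(3,q)$), so $d=4$; and a subline of nonsingular points is, by the definition of nonsingularity of a fourfold tensor, the contraction of a nonsingular fourfold tensor.

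\textbf{Main obstacle.} The two coordinate computations are routine; the genuine care lies (i) in the disjointness step, where externality of each individual contraction line must be propagated to all of $\cQ'$ via the fact that $R_1$ partitions $\cQ'$, and (ii) in pinning down, in the converse, exactly which lines $m$ produce nonsingular points. For the latter the key point is that a line of $\overline{R_2}$ meets $\Sigma$ if and only if it is real: since $\cQ'$ is defined over $\Fq$, the collineation $\sigma$ preserves $\overline{R_2}$, and any $P\in m\cap\Sigma$ satisfies $P=P^\sigma\in m\cap m^\sigma$, which is empty when $m\neq m^\sigma$ (two distinct lines of a regulus are skew). The real lines of $\overline{R_2}$ lie in $\Sigma$ and yield singular points (points of $\Sigma$ lie on $\Delta=0$), which is precisely why the nonsingular conclusion requires the imaginary lines.
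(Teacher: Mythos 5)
The paper states this lemma without proof (all of Section \ref{sec:fourfold tensors} is given without arguments), so there is no authorial proof to compare against; judged on its own, your argument is correct and supplies the missing details. Your coordinatisation via $e_1=\zeta_0^\vee(S),\dots,e_4=\zeta_1^\vee(T)$, the identification of the contraction sublines with the regulus $x_1x_4-x_2x_3=0$, the inversion $S=c(\zeta_1^qe_1-\zeta_0^qe_2)$ placing $\langle S,T\rangle$ on an imaginary line of the opposite regulus, and the propagation of externality to all of $\cQ'$ via the partition of the quadric by a regulus, are all sound and rest only on results the paper has already established (Theorem \ref{contraction is subline}, Result \ref{points are on extended sigma line}, and the geometric characterisation of nonsingularity at the end of Section \ref{section nonsingular tensors geometry}). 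One substantive point you raise deserves emphasis: the converse as literally stated (``any line in the extension of this quadric'') fails for the $2(q+1)$ lines of $\overline{\cQ'}$ that are extensions of lines of $\cQ'$ itself, since for those the resulting subline lies in $\Sigma$ and consists of singular points ($\Sigma\subseteq\cD$); your restriction to the imaginary lines of the two reguli, justified by the observation that a line of $\overline{R_2}$ meets $\Sigma$ only if it is fixed by $\sigma$, is the correct reading and should be recorded as a hypothesis in the statement. Two cosmetic quibbles: four independent points form a simplex rather than a frame, and the condition you want is not ``$m\not\subseteq\Sigma$'' (no $\Fqt$-line is contained in $\Sigma$) but ``$m$ is not the extension of a line of $\cQ'$'', as your closing paragraph in fact makes clear.
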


This allows us to fully classify all nonsingular fourfold tensors by classifying such quadrics disjoint from $\cQ_0$.

\begin{theorem}
The equivalence classes of nonsingular fourfold tensors $U$ such that $d(U)=4$ are in one-to-one correspondence with the equivalence classes under the stabiliser of $\cQ_0$ of hyperbolic quadrics disjoint from $\cQ_0$ containing a fixed line disjoint from $\cQ_0$.
\end{theorem}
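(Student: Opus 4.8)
The plan is to promote the correspondence already set up in the previous lemma—sending a nonsingular fourfold tensor $U$ with $d(U)=4$ to the hyperbolic quadric $\cQ'$ carrying the regulus $C_3(U)$, together with the opposite-regulus line $\ell$ containing the contraction subline $\langle S,T\rangle$—into a bijection at the level of equivalence classes. The previous lemma furnishes the forward assignment $U\mapsto(\cQ',\ell)$ and, through its converse, surjectivity; so the whole content of the theorem reduces to the single biconditional
\[
U\sim_{\GL^4}U'\iff(\cQ',\ell)\sim_{G_0}(\cQ'',\ell').
\]
I would establish this by decomposing the acting group $\GL(V)^4$ into the action on the first two factors and the action on the last two, and tracking the effect of each on the geometric data.

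First I would analyse the first two factors. By Section \ref{sec:groupaction}, an element $(f,g,1,1)$ acts on every threefold contraction by $M\mapsto D_fMD_g^{t}$. Since $\det(D_fMD_g^{t})=\det(D_f)\det(M)\det(D_g)$ and $Q=\det$, this map fixes $\cQ$ and hence $\cQ_0=\cQ\cap\Sigma$, so it induces a collineation lying in $G_0$; more precisely it is a similarity of $\det$ with multiplier $\det(D_f)\det(D_g)$, and as the $D$'s range over the Dickson matrices this multiplier sweeps out all of $\Fq^{\times}$. Because left- and right-multiplication respect the row- and column-space structure of a rank-one matrix, these maps preserve each of the two reguli of $\cQ'$; hence they move the pair $(\cQ',\ell)$ within its $G_0$-orbit without ever interchanging the reguli. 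Result \ref{group properties} then supplies the transitivity (on points of $\Sigma\setminus\cQ_0$ and on the various families of $\Sigma$-sublines) needed to normalise $\ell$ to a fixed line and to reduce $\cQ'$ to a representative of its orbit.

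Next I would show that the last two factors fix $\cQ'$ and absorb the remaining freedom in $\ell$ and in $\langle S,T\rangle$. The action $(1,1,1,k)$ on the fourth factor only reparametrises the contraction, so it permutes the points of the subline $\langle S,T\rangle$ by $\mathrm{PGL}(2,q)$ while fixing that subline—and therefore $\ell$ and $\cQ'$—as sets. The action $(1,1,h,1)$ on the third factor is the map $(1,1,h)$ of Section \ref{sec:thirdgl} applied simultaneously to every contraction; by that section it fixes each contraction subline $C_3(\lambda S+\mu T)$, hence fixes the regulus $C_3(U)$ and the quadric $\cQ'$, while sliding each point $\lambda S+\mu T$ along its generator $\overline{C}_3(\lambda S+\mu T)$ in the opposite regulus. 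As noted there, for $n=2$ this third action is transitive on the points of an extended subline lying outside $\Sigma$, so it carries $\ell$ to an arbitrary generator of the opposite regulus disjoint from $\Sigma$. Consequently $\ell$ is pinned down only up to its regulus, and once the first two factors have moved it onto the fixed line the sole surviving invariant is the $G_0$-class of $\cQ'$ together with the marked (opposite-to-contraction) regulus—exactly the data in the statement.

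The hard part will be the exact matching of the two groups, and in particular the bookkeeping around the two reguli. I must confirm that $(f,g)\mapsto(M\mapsto D_fMD_g^{t})$ realises precisely the regulus-preserving part of $G_0$—the index-two subgroup $PCSO^+(4,q)$ of proper similarities—so that every $G_0$-equivalence that preserves the marked regulus is realised by a tensor equivalence, with the similarity multipliers accounted for as above. The subtlety is that interchanging the two reguli of $\cQ'$ requires the transpose $\tau_1$ of Section \ref{symmetricaction}, which is \emph{not} an element of $\GL^4$; it is exactly this asymmetry that forces the opposite-regulus line $\ell$ to be retained as genuine classifying data, and the role of the phrase ``containing a fixed line disjoint from $\cQ_0$'' is to mark that regulus. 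Making this watertight amounts to controlling $\mathrm{Stab}_{G_0}(\cQ',\ell)$ and verifying that, for quadrics disjoint from $\cQ_0$ through the fixed line, no hidden regulus-swap in the stabiliser collapses two inequivalent tensor classes—this is where injectivity, and hence the bijection rather than a mere surjection, must be earned.
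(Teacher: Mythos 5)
The paper states this theorem without proof --- all of Section \ref{sec:fourfold tensors} consists of unproved statements --- so there is no argument of the authors' to measure yours against; I can only assess the proposal on its own terms. Your route is the natural one: take the assignment $U\mapsto(\cQ',\ell)$ from the preceding lemma, get surjectivity from its converse together with the transitivity of $G_0$ on $\Sigma$-lines external to $\cQ_0$, and then match $\GL(V)^4$ against $G_0$ factor by factor. The factor-by-factor analysis is essentially right: $(f,g,1,1)$ acts as $M\mapsto D_fMD_g^t$, multiplies $Q=\det$ by $\det(D_f)\det(D_g)\in\Fq^\times$ and so induces an element of $G_0$; $(1,1,1,k)$ fixes the contraction subline $\langle S,T\rangle$ setwise; and $(1,1,h,1)$ fixes each $C_3(\lambda S+\mu T)$ and hence $\cQ'$. (One slip in the wording: $\overline{C}_3(\lambda S+\mu T)$ is a line of the \emph{extended contraction regulus}, not of the opposite regulus; the point $\lambda S+\mu T$ slides along that line, and it is the transversal $\ell$ that consequently moves within the opposite regulus.)

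The genuine gap is that the two items you yourself label ``the hard part'' are exactly the mathematical content of the theorem beyond the preceding lemma, and they are deferred rather than proved. First, you never verify that the maps $M\mapsto D_fMD_g^t$ exhaust the regulus-preserving part of $G_0$ rather than a proper subgroup; this is what the implication $(\cQ',\ell)\sim_{G_0}(\cQ'',\ell')\Rightarrow U\sim U'$ rests on. Second, and more seriously, injectivity is left entirely open: if some $g\in G_0$ carries $\cQ'$ to $\cQ''$ while sending the contraction regulus of $\cQ'$ to the non-contraction regulus of $\cQ''$, then the two quadrics are identified in the target of the correspondence even though the tensors need not be $\GL^4$-equivalent, and the induced map on equivalence classes fails to be injective. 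Closing this requires either showing that any two $G_0$-equivalent quadrics through the fixed line are already equivalent by a regulus-preserving element (for instance by exhibiting a regulus swap inside $\mathrm{Stab}_{G_0}(\cQ')$ for every admissible $\cQ'$), or enlarging the tensor equivalence to include the relevant permutation of factors. As written, the central biconditional of your plan is established in neither direction beyond the easy inclusions, so the proposal is a credible outline rather than a proof.
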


%\begin{proof}
    
%\end{proof}

\section{Quasi-Hermitian surfaces}\label{sec:quasi hermitian}

In this section we consider orbits of points of $\PG(3,q^2)$ under the action of the group $G_0$. This analysis naturally gives rise to certain surfaces which we will denote by $S_\xi$; these surfaces turn out to form building blocks for new {\it quasi-Hermitian surfaces}.

A quasi-polar space is a set of points in $\PG(m, q)$, $m \geq2$, with the same intersection sizes with hyperplanes as a non-degenerate classical polar space embedded in $\PG(m, q)$. 
A trivial example of quasi-polar space in $\PG(2n-1,q)$ is given by the set of $q^{n-1}$ elements out of an $(n-1)$-spread, to obtain a quasi-polar space of hyperbolic type and of $q^{(n+1)/2}$ elements out of an $(n-1)$-spread, to obtain a quasi-polar space of hermitian type. 

Quasi-Hermitian varieties in $\PG(2,q^2)$ are classical objects in finite geometry, known as unitals.
%A {\em quasi-Hermitian variety} of $\PG(r, q^2)$ is a set $\cS$ of points which has the same intersection numbers with hyperplanes as a non-degenerate Hermitian variety of $\PG(r, q^2)$. 
For $r\geq 3$ the size of a polar space is characterized by its intersection numbers with respect to hyperplanes, see \cite{SchillewaertVoorde2022}.
The first nontrivial constructions were provided by De Clerck, Hamilton, O'Keefe, and Penttila, in \cite{DHOP} for quadratic type, and by De Winter and Schillewaert in \cite{DS} for hermitian type. Since quasi-polar spaces have two intersection numbers with respect to hyperplanes of $\PG(r, q^2)$, they produce strongly regular graphs and two-weight codes, see \cite{CK}. 
A strongly regular graph $\Gamma(\cH)$, is obtained from the two character set $\cH\subseteq\PG(r,q)$ by embedding $\PG(r, q)$ as a hyperplane $\Pi$ of $\PG(r+1, q)$, and by considering the graph whose vertices are the points of $\PG(r+1, q) \setminus \Pi$ and two vertices are adjacent if they span a line meeting $\cH$. 
Moreover, by \cite[Theorem 3.5]{CRV}, under mild hypotheses, given two quasi-polar spaces $\cH$ and $\cH'$, the graphs $\Gamma(\cH)$ and $\Gamma(\cH')$ are isomorphic if and only if there is a collineation of $\PG(r+1, q)$ fixing $\Pi$ and mapping $\cH$ to $\cH'$. 
 Since the parameters of the strongly regular graph arising in such way from a two character set are determined by the characters, nonisomorphic quasi-polar spaces can produce many nonisomorphic strongly regular graphs with the same parameters. In particular, the construction provided in this paper provides many nonisomorphic strongly regular graphs with the same parameters and the same non trivial automorphism group isomorphic to $\mathrm{PCSO}^+(4,q)$.
\subsection{The surfaces $S_\xi$}
Since the surface $\cD$ consists of the points on extended generators and on extended tangent of $\cQ_0$ it is the quasi-Hermitian variety of \cite{Pavese2015}. As we will prove in Theorem \ref{quasi hermitian construction}, we can see this construction as a degenerate case of a more general construction, which goes hand to hand with the one in \cite{LavrauwLiaPavese2023}, and will lead us to new examples of quasi-Hermitian surfaces. The construction requires the introduction of some surfaces, which are obtained by a deformation of the quadric and the Hermitian surfaces arising from System \eqref{system}.

We define $S_\xi$ to be the surface of equation \[S_\xi:\quad H-2\xi Q^{(q+1)/2}=0.\] 
The set $\cS=\{S_\xi:\xi \in\mathbb{F}_{q^2}\}$ consists of a pencil of surfaces, with $\cQ_0\sqcup \cT$ as base. 
Every point of $PG(3,q^2)$ is contained in one of these surfaces, and, over $PG(3,q^2)$, only $2(q-1)+1$ members of $\cS$ contain points other than the base.
Indeed, since $H^2(P)$ and $Q(P)^{q+1}$ belong to $\fq$ for any $P\in PG(3,q^2)$, for any $\xi$ such that $\xi^2\not\in\fq$, the set $S_\xi(\fq^2)$ is precisely the set of $\mathbb{F}_{q^2}$-rational points of $\cH\cap\cQ$, which is easily seen to be the base of the pencil $\cS$.
The $2(q-1)$ surfaces with size larger than $(q+1)(2q^2-q+1)=\mid\cQ\cap \cH\mid$, are those in the set $\{S_\xi:\xi^{2(q-1)}=1\}$. 
From now on, we denote this set by $\cS$.
The elements of $\cS$ belong to one of two families of surfaces, $\cS^1$ and $\cS^2$ defined below.

\begin{lemma}\label{lemma:tec}\cite[Lemma 2.3]{LavrauwLiaPavese2023}
Define the sets
\begin{align*}
& \cZ_1 = \left\{\xi \in \F_{q^2}\setminus \{\pm 1\}: 1-1/\xi^2\in\square^\times_q\right\}; \\ 
& \cZ_2 = \left\{\xi\in \F_{q^2}: 1-1/\xi^2\not\in\square^\times_q\right\}. 
\end{align*}
Then $\cZ=\cZ_1 \sqcup \cZ_2=\{\xi\in\F_{q^2}:\xi^{2(q-1)}=1\}$, $|\cZ_1|=q-3$, $|\cZ_2|=q-1$ and so $|\cZ_1 \cup \cZ_2| = 2(q-1)$.
\end{lemma}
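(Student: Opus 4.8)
The plan is to prove the final statement, Lemma \ref{lemma:tec}, by a direct counting argument exploiting the fact that the relevant conditions depend only on $\xi^2$ and its behaviour relative to the subfield $\Fq$. First I would observe that the condition $\xi^{2(q-1)}=1$ is equivalent to $(\xi^2)^{q-1}=1$, which holds precisely when $\xi^2 \in \Fq^\times$; thus setting $t = \xi^2$, the set $\{\xi \in \F_{q^2}: \xi^{2(q-1)}=1\}$ is exactly the preimage under squaring of $\Fq^\times$. Since each nonzero element of $\Fq$ has exactly two square roots in $\F_{q^2}$ (as $q$ is odd and every element of $\Fq$ is a square in $\F_{q^2}$, the unique quadratic extension), this set has size $2(q-1)$, giving the last equality of the claim immediately.

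Next I would establish that $\cZ_1 \cup \cZ_2 = \{\xi \in \F_{q^2}: \xi^{2(q-1)}=1\}$ and that the union is disjoint, which amounts to checking that for every such $\xi$ (excluding the cases where the expression is undefined or zero), the quantity $1 - 1/\xi^2$ lands in exactly one of $\square_q^\times$ or $\not\square_q^\times$. The key point here is that if $\xi^2 = t \in \Fq^\times$, then $1 - 1/t \in \Fq$, so its square-class is a well-defined notion over $\Fq$, and $1-1/t$ is either a nonzero square, a nonsquare, or zero over $\Fq$. I would then sort the elements $t \in \Fq^\times$ according to these three possibilities: $t=1$ gives $1-1/t = 0$ (excluded, corresponding to $\xi = \pm 1$); the remaining values split between $\square_q^\times$ and $\not\square_q^\times$.

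The main computational step, which I expect to be the only real obstacle, is counting how many $t \in \Fq^\times \setminus \{1\}$ satisfy $1 - 1/t \in \square_q^\times$ versus $\not\square_q^\times$. I would reduce this to counting solutions of $1 - 1/t = s^2$, i.e. the number of pairs, using the standard count of points on the affine conic or equivalently the character-sum evaluation $\sum_{t \in \Fq^\times \setminus\{1\}} \left(1 + \chi(1-1/t)\right)$, where $\chi$ is the quadratic character of $\Fq$. A change of variable $u = 1/t$ turns $1 - 1/t$ into $1 - u$ ranging over $\Fq \setminus \{0,1\}$, and the relevant character sum $\sum_{u} \chi(1-u)$ is a complete sum over all of $\Fq$ minus the two excluded terms, which evaluates to a small constant. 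Carefully tracking these boundary contributions yields that the number of $t$ with $1-1/t$ a nonzero square is $\tfrac{q-3}{2}$ worth of $t$-values, but since each such $t$ has two square roots $\xi$, this doubles to give $|\cZ_1| = q-3$; similarly the nonsquare case gives $|\cZ_2| = q-1$. Once these two character sums are pinned down and the factor of two from square roots is accounted for, the sizes $|\cZ_1| = q-3$ and $|\cZ_2| = q-1$ follow, and their sum is $2(q-1)$, confirming disjointness against the total count already computed.

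Since this statement is cited as \cite[Lemma 2.3]{LavrauwLiaPavese2023}, I would keep the argument brief, noting that the essential content is the elementary quadratic-character count above, and that the subtlety lies only in correctly handling the excluded values $\xi = \pm 1$ (where $1-1/\xi^2 = 0$) and the doubling from the two-to-one squaring map $\xi \mapsto \xi^2$.
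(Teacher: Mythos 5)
Your argument follows essentially the same route as the paper's: both reduce to an elementary count of the square classes of $1-1/t$ as $t=\xi^2$ runs over $\Fq^\times\setminus\{1\}$ (the paper parametrises $\cZ_1$ by a square root $s$ of $1-1/\xi^2$ and counts admissible $s$; you run the equivalent quadratic-character sum in $u=1/t$), and both then double by the two-to-one squaring map onto $\Fq^\times$. Your values $|\cZ_1|=q-3$ and $|\cZ_2|=q-1$ are correct.

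One point you should not gloss over: your closing claim that these sizes sum to $2(q-1)$ is arithmetically false, since $(q-3)+(q-1)=2(q-2)$. Indeed $\cZ_1\sqcup\cZ_2$ omits exactly the two elements $\xi=\pm 1$ of $\{\xi\in\F_{q^2}:\xi^{2(q-1)}=1\}$, for which $1-1/\xi^2=0$ lies in neither square class --- something you yourself observe earlier when you set aside $t=1$. The discrepancy is already present in the statement as printed (both the asserted set equality and the total $2(q-1)$), so it is not a defect of your counting; but your write-up should record the corrected total $2(q-2)$ and the exclusion of $\pm 1$ rather than reproduce the inconsistent identity.
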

\begin{proof}
The set $\cZ_1$ can be rewritten as $\cZ_1=\left\{\xi \in \F_{q^2}: \xi^2=1/(1-s^2), s\in\F_q^\times\setminus\{\pm 1\}\right\}$. Since for any value of $s\in\F_q^\times\setminus\{\pm 1\}$ it is possible to find two values of $\xi$ with $s^2+\xi^2=1$, the claim on the size of $\cZ_1$ follows.
The proof for $\cZ_2$ is analogous.
\end{proof}
We define $\cS^1$ and $\cS^2$ to be the following sets:
\[\cS^1:=\{S_\xi:\xi \in \cZ_1\},\quad
\cS^2:=\{S_\xi:\xi \in \cZ_2\}.
\]
We also define, for ease of reference in the remainder of the section, the set $\Tilde{\cS}$ as $\Tilde{\cS}=\{S_\xi\setminus\cH\mid \xi\in\cZ\}$, and we define $\Tilde{\cS}^1$ and $\Tilde{\cS}^2$ in an analogous way.
A first observation is that the surface $\cD$ consists precisely of the join of the surfaces $S_1$ and $S_{-1}$.

\begin{lemma}
All the points of $\tS$, with $\tS$ in $\tcS^1$ are nonsingular, and all the points of $\tS$, with $\tS$ in $\tcS^2$ are singular.
\end{lemma}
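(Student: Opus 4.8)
The plan is to connect the two families $\tcS^1$ and $\tcS^2$ to the singularity criterion established in Proposition \ref{nonsingularity}, namely that a point $P$ is nonsingular precisely when $\Delta(P)\in\square_q^\times$, where $\Delta=H^2-4Q^{q+1}$. The essential idea is that membership of $P$ in a surface $S_\xi$ fixes the ratio $H(P)^2/Q(P)^{q+1}$, and this ratio in turn determines the square class of $\Delta(P)$. So first I would take a point $P\in\tilde{S}_\xi$, meaning $P$ lies on $S_\xi$ but not on $\cH$; by definition of $S_\xi$ we have $H(P)=2\xi Q(P)^{(q+1)/2}$, and since $P\notin\cH$ we have $H(P)\neq 0$, forcing $Q(P)\neq 0$ (so $P\notin\cQ^+$ either, consistent with the analysis of System \eqref{system}).

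Next I would substitute this relation into $\Delta$. We get
\[
\Delta(P)=H(P)^2-4Q(P)^{q+1}=4\xi^2 Q(P)^{q+1}-4Q(P)^{q+1}=4Q(P)^{q+1}(\xi^2-1).
\]
Now $Q(P)^{q+1}=Q(P)^{q+1}$ is a norm, hence a nonzero element of $\F_q$, and in fact a nonzero square in $\F_q$ since $Q(P)^{q+1}=(Q(P)^{(q+1)/2})^2$ when $q$ is odd (and this is where I would record that $q$ is odd, as is implicit throughout this section). Thus the square class of $\Delta(P)$ in $\F_q^\times$ is exactly the square class of $\xi^2-1$, equivalently of $1-1/\xi^2$ after dividing by the square $\xi^2$. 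Crucially, for $\xi\in\cZ=\cZ_1\cup\cZ_2$ we have $\xi^{2(q-1)}=1$, which guarantees $\xi^2\in\F_q$ and hence $\xi^2-1\in\F_q$, so the quantity $\Delta(P)$ genuinely lies in $\F_q$ and its square class is well defined and independent of $P$.

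The final step is to match the two sets $\cZ_1,\cZ_2$ of Lemma \ref{lemma:tec} against the criterion. By that lemma, $\xi\in\cZ_1$ means $1-1/\xi^2\in\square_q^\times$, and since $1-1/\xi^2=(\xi^2-1)/\xi^2$ differs from $\xi^2-1$ by the square $1/\xi^2$, this is equivalent to $\xi^2-1\in\square_q^\times$, hence to $\Delta(P)\in\square_q^\times$; by Proposition \ref{nonsingularity} the point $P$ is then nonsingular. Likewise $\xi\in\cZ_2$ gives $1-1/\xi^2\in\not\square_q^\times$, so $\Delta(P)$ is a nonsquare and $P$ is singular. This proves that every point of $\tilde{S}$ for $\tilde{S}\in\tcS^1$ is nonsingular and every point for $\tilde{S}\in\tcS^2$ is singular.

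I do not anticipate a deep obstacle here; the argument is essentially a direct computation combined with the bookkeeping of square classes. The one point requiring care is ensuring all the quantities are honestly in $\F_q$ so that ``square in $\F_q$'' is meaningful: I must verify $\xi^2\in\F_q$ (from $\xi^{2(q-1)}=1$) and that $Q(P)^{q+1}\in\square_q^\times$ for $P\notin\cQ^+$, and handle the division by $\xi^2$ correctly as a square class manipulation. The removal of $\cH$ in passing to $\tilde{S}$ is exactly what rules out the degenerate case $H(P)=Q(P)=0$ on the basis of the pencil, so that the ratio is well defined; I would flag this as the reason $\tcS$ rather than $\cS$ appears in the statement.
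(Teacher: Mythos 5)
Your overall strategy --- substitute the defining relation of $S_\xi$ into $\Delta$ and read off the square class via Lemma \ref{lemma:tec} and Proposition \ref{nonsingularity} --- is exactly the paper's, but one step of your computation is false as stated. You claim that $Q(P)^{q+1}$ is a nonzero square in $\F_q$ because it equals $\left(Q(P)^{(q+1)/2}\right)^2$. That only exhibits it as the square of an element of $\F_{q^2}$; it is a square \emph{in $\F_q$} precisely when $Q(P)^{(q+1)/2}\in\F_q$, i.e.\ when $Q(P)$ is a square in $\F_{q^2}$ (the norm $x\mapsto x^{q+1}$ sends squares of $\F_{q^2}^\times$ to squares of $\F_q^\times$ and nonsquares to nonsquares). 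Likewise your later ``dividing by the square $\xi^2$'' is unjustified: $\xi^2\in\F_q^\times$ is a square in $\F_q$ only when $\xi\in\F_q$, and half of the $2(q-1)$ admissible $\xi$ satisfy $\xi^{q-1}=-1$ instead. For such a $\xi$ the relation $H(P)=2\xi Q(P)^{(q+1)/2}$ with $H(P)\in\F_q^\times$ forces $Q(P)^{(q+1)/2}\notin\F_q$, so $Q(P)^{q+1}$ is a nonsquare in $\F_q$ --- a direct counterexample to your intermediate claim.

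The two errors happen to cancel: from $H(P)^2=4\xi^2Q(P)^{q+1}$ with $H(P)\in\F_q^\times$ one sees that $Q(P)^{q+1}$ and $\xi^2$ lie in the \emph{same} square class of $\F_q^\times$, so $4Q(P)^{q+1}(\xi^2-1)$ and $(\xi^2-1)/\xi^2=1-1/\xi^2$ do share a square class and your final matching with $\cZ_1,\cZ_2$ is correct. But as written the argument is not sound, and the cleanest repair is the paper's one-line substitution in the other direction: write $Q(P)^{(q+1)/2}=H(P)/(2\xi)$ so that $\Delta(P)=H(P)^2\left(1-1/\xi^2\right)$, where $H(P)^2$ is manifestly a nonzero square in $\F_q$ because $H(P)\in\F_q^\times$; then conclude by Lemma \ref{lemma:tec} and Proposition \ref{nonsingularity}. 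Your remarks about why $\cH$ must be removed (so that $H(P)\ne0$, hence $Q(P)\ne 0$) and why $\xi^2\in\F_q$ are correct and worth keeping.
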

\begin{proof}
If $P\in S_\xi$, then $H(P)-2\xi Q(P)^{(q+1)/2}=0$, therefore $\Delta(P)=H(P)^2(1-1/\xi^2)$. The claim follows.
\end{proof}

Although the group $G$, stabilizing $\cQ_0$, does not stabilize each of the sets $\tS$, it possesses a subgroup of index $2$ which does stabilise each $\tS$. Moreover, $G$ stabilises the union of certain pairs of these sets.

%\begin{definition}
    Let $G_0$ denote the subgroup of $G$ induced by elements of the form $A\otimes B$, where $A$ and $B$ are Dickson matrices satisfying 
    \[
    \det(A)\det(B)\in (\Fq^\times)^2.
    \]
    This group is isomorphic to $\mathrm{PCSO}^+(4,q)$.
%\end{definition}

\begin{proposition}\label{surfacesstabilized}
The group $G$ preserves each union of the type $\tS_\xi\cup\tS_{-\xi}$, with $\tS_\xi\in\tcS$. The group $G_0$ preserves each set in $\tcS$. Moreover, $G_0$ preserves the sets $\cH_1$ and $\cH_2$ of nonsingular and singular points of $\cH\setminus\cQ$.
\end{proposition}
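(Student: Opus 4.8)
The plan is to reduce the invariance statement to the action of $G_0$ on the algebraic forms $Q$ and $H$, and then to exploit the fact that the surfaces $S_\xi$ and the decorations $\cH_1,\cH_2$ are defined entirely in terms of these two forms together with the square/non-square character of $\Delta$. First I would recall that $G_0\simeq PCGO^+(4,q)$ is precisely the group of collineations of $\PG(3,q^2)$ stabilising the subquadric $\cQ_0$, and that by the commuting-polarities setup $\cQ_0$ is simultaneously the intersection of $\cQ$ and $\cH$ with $\Sigma$. The key algebraic fact I want is that any $g\in G_0$ acts on the pair of forms $(Q,H)$ by scaling: since $g$ preserves $\cQ_0=\cQ\cap\Sigma=\cH\cap\Sigma$ and the two polarities $\perp_\cQ,\perp_\cH$ are attached canonically to the geometry that $g$ preserves, $g$ must send the quadratic form $Q$ to $\lambda_g\, Q\circ g^{-1}$-type expression and the Hermitian form $H$ to $\mu_g\, H$ up to the Frobenius twist inherent in a collineation. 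Concretely, I would argue $Q^g = c_1 Q$ and $H^g = c_2 H$ for scalars $c_1\in\Fqt^\times$, $c_2\in\Fq^\times$ (the constraint $H\in\Fq$ forcing the Hermitian multiplier to land in $\Fq$), so that the defining equation $H-2\xi Q^{(q+1)/2}=0$ is transformed into an equation of the same shape with $\xi$ replaced by some $\xi'$.

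The main work is to pin down exactly how $\xi$ transforms, and this is where the three sub-claims separate. Writing $Q^g=c_1 Q$ and $H^g=c_2 H$, a point $P$ lies on $S_{\xi}$ iff $g(P)$ lies on $S_{\xi'}$ where $\xi' = \xi\, c_1^{(q+1)/2}/c_2$. I would then compute the invariant $\xi'^2/\xi^2 = c_1^{q+1}/c_2^2$. Since $c_1^{q+1}=N_{\Fqt/\Fq}(c_1)\in\Fq^\times$ and $c_2^2\in(\square_q^\times)$, the ratio $c_1^{q+1}/c_2^2$ lies in $\Fq^\times$, and crucially the quantity $1-1/\xi^2$ is multiplied by a factor that preserves the square class, because $1-1/\xi'^2$ differs from $1-1/\xi^2$ only through such norm/square factors entering the computation of $\Delta$. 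This is precisely the content of the preceding lemma: $\Delta(P)=H(P)^2(1-1/\xi^2)$ on $S_\xi$, and $\Delta$'s square class in $\Fq$ is a collineation-invariant up to a global square (one checks that $\Delta^g = c_2^2\,\Delta$, so the square class of $\Delta(P)$ is preserved). Hence the partition of $\cS$ into $\cS^1$ and $\cS^2$ according to whether $1-1/\xi^2\in\square_q^\times$ or $\not\square_q^\times$, i.e. according to $\cZ_1$ versus $\cZ_2$ in Lemma \ref{lemma:tec}, is respected by $G_0$; this simultaneously yields that each $\tS$ is preserved by $G_0$ and that the non-singular/singular split of $\cH\setminus\cQ$ into $\cH_1,\cH_2$ is preserved, since by Proposition \ref{nonsingularity} non-singularity is exactly the condition $\Delta(P)\in\square_q^\times$.

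For the coarser statement that the smaller group $G$ preserves only each unordered pair $\tS_\xi\cup\tS_{-\xi}$, I would track the sign ambiguity: the passage from $Q^g=c_1 Q$ to the half-power $Q^{(q+1)/2}$ introduces a well-defined value only up to the sign coming from the choice of square root, so that a collineation in $G\setminus G_0$ (or an element inducing a nontrivial action on the two reguli/the two polarity families) can flip $\xi\mapsto -\xi$ while fixing $\xi^2$. Thus such elements preserve $\{S_\xi,S_{-\xi}\}$ but may swap its two members, whereas the index-extending generators fixing each regulus keep $\xi$ itself; this is exactly the distinction between $G$ and $G_0$ asserted in the statement. I expect the main obstacle to be the first step: rigorously establishing that every $g\in G_0$ scales $Q$ and $H$ by the claimed multipliers with $c_2\in\Fq^\times$, since this requires combining the commuting-polarities structure (so that stabilising $\cQ_0$ forces simultaneous stabilisation of both $\cQ$ and $\cH$ as varieties, by Proposition \ref{intersection quadric and hermitian }) with a careful check that a collineation stabilising a non-degenerate Hermitian surface multiplies its Hermitian form by a scalar in the base field $\Fq$ rather than an arbitrary element of $\Fqt$. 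Once this scaling behaviour is in hand, the remaining computations are the short square-class bookkeeping sketched above.
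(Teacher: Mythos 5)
Your overall strategy (reduce everything to how $g$ transforms the forms $Q$ and $H$) is the same as the paper's, but the paper pins down the multipliers exactly, whereas you only track them up to squares, and this is where your argument breaks down. The paper writes a generic element of $G$ explicitly as $C=A\otimes B$ with $A,B$ $2\times 2$ matrices of Dickson type and computes that $Q$ and $H$ are both scaled by the \emph{same} factor $\det(A)\det(B)\in\Fq^\times$; for $G_0$ this factor is $1$, so the equation $H-2\xi Q^{(q+1)/2}=0$ is preserved verbatim and each individual $S_\xi$ is fixed. In your notation this amounts to proving $c_1=c_2$ (and $=1$ on $G_0$), which you never establish: you assert $Q^g=c_1Q$, $H^g=c_2H$ with independent scalars and then only compute the \emph{square class} of $c_1^{q+1}/c_2^2$.

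This leaves two concrete gaps. First, even your weaker claims need the exact identity $c_1^{q+1}=c_2^2$: you write $\Delta^g=c_2^2\Delta$, but from $Q^g=c_1Q$, $H^g=c_2H$ one only gets $\Delta^g=c_2^2H^2-4c_1^{q+1}Q^{q+1}$, which is not proportional to $\Delta$ unless $c_1^{q+1}=c_2^2$; this identity is exactly what the explicit $A\otimes B$ computation delivers and what your abstract scaling argument does not. Second, and more seriously, square-class bookkeeping can at best show that $G_0$ preserves the \emph{unions} $\bigcup_{\xi\in\cZ_1}\tS_\xi$ and $\bigcup_{\xi\in\cZ_2}\tS_\xi$ (and the sets $\cH_1,\cH_2$), because many distinct values of $\xi$ share the same square class of $1-1/\xi^2$. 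The statement you must prove is that $G_0$ fixes \emph{each} $\tS_\xi$ separately, i.e.\ that $\xi'=\xi\,c_1^{(q+1)/2}/c_2$ equals $\xi$ for every $g\in G_0$; that requires the exact scalar $c_1^{(q+1)/2}/c_2=1$, not merely a statement about its square class. To close the gap you should replace the abstract "$g$ must scale $Q$ and $H$" step by the paper's explicit computation with $C=A\otimes B$, from which $c_1=c_2=\det(A)\det(B)$ (giving the pair-preservation for $G$ since only $\xi^2$ enters $H^2-4\xi^2Q^{q+1}$) and $c_1=c_2=1$ on $G_0$ (giving preservation of each $S_\xi$) both follow at once.
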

\begin{proof}
Let $A=\begin{pmatrix}
a & b   \\
b^q & a^q \\
\end{pmatrix}$ and $B=\begin{pmatrix}
c & d   \\
d^q & c^q \\
\end{pmatrix}$ be nonsingular matrices. Then we can consider $C=A\otimes B$ to be an element of $G$, and every element of $C$ is induced by a matrix of this form. 
Then $C$ preserves $\tS_\xi\cup\tS_{-\xi}$ if and only if 
\[
H(P)^2-4\xi^2Q(P)^{q+1}=0
\]
implies 
\[
(C^TH(P)C^q)^2-4\xi^2(C^TQ(P)C)^{q+1}=0.
\]
Since 
\[
C^THC^q=\det(A)\det(B)Q; \quad C^TQC=\det(A)\det(B)Q; 
\]
the first claim is true.
Moreover, $C$ preserves $\tS_\xi$ if and only if 
\[
H(P)-2\xi Q(P)^{(q+1)/2}=0
\]
implies 
\[
C^TH(P)C^q-2\xi(C^TQ(P)C)^{(q+1)/2}=0.
\]
This occurs if and only if $\det(A)\det(B)$ is a nonzero square in $\Fq$, and hence the second claim is true as well.
The last statement is a consequence of the fact that $G$ preserves $\cH$ and preserves the singularity of points.
\end{proof}
\begin{proposition} \label{sigmalines external to q0}
A $\Sigma$-line external to $\cQ_0$ meets each of the surfaces $\tS$, with $\tS\in\tcS^1$, in $q+1$ (nonsingular) points.
\end{proposition}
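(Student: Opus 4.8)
The plan is to parametrise the external $\Sigma$-line $\ell$ and compute the restrictions of $\cQ$ and $\cH$ to it explicitly. Choosing two points of $\ell\cap\Sigma$ represented by Dickson vectors $v_0,v_1$, and writing a general point as $P=\langle sv_0+tv_1\rangle$ with $(s:t)\in\PG(1,q^2)$, one uses that $H=2Q$ on $\Sigma$ together with the coincidence of the associated sesquilinear and bilinear forms on pairs of $\Sigma$-vectors to obtain
\[
Q(P)=as^2+bst+ct^2,\qquad H(P)=2as^{q+1}+b(st^q+s^qt)+2ct^{q+1},
\]
with $a,b,c\in\fq$; here $\ell$ being external to $\cQ_0$ is precisely the anisotropy of $as^2+bst+ct^2$ over $\fq$, i.e. $b^2-4ac\in\not\square_q$. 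Dehomogenising ($t=1$, $\lambda=s$) and factoring $Q=a(\lambda-\theta)(\lambda-\theta^q)$ with $\theta\in\Fqt\setminus\fq$, I set $A=N(\lambda-\theta)$ and $B=N(\lambda-\theta^q)$, where $N\colon\Fqt\to\fq$ is the norm. A short computation then yields the clean identities
\[
H=a(A+B),\qquad Q^{q+1}=a^2AB,\qquad \Delta=H^2-4Q^{q+1}=a^2(A-B)^2 .
\]
In particular $\Delta$ is a square at every point of $\ell\setminus\Sigma$, reconfirming through Proposition \ref{nonsingularity} that all such points are non-singular.

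Next I would fix $\xi\in\cZ_1$ and translate membership in $\tS_\xi$ into a condition on $r:=A/B\in\fq^\times$. Away from $\cH$ and $\cQ$, the equation $H^2=4\xi^2Q^{q+1}$ defining $S_\xi\cup S_{-\xi}$ becomes $(A+B)^2=4\xi^2AB$, i.e.
\[
r^2-(4\xi^2-2)\,r+1=0 .
\]
Its discriminant is $16\xi^2(\xi^2-1)$, and the defining property $1-1/\xi^2\in\square_q^\times$ of $\cZ_1$ (Lemma \ref{lemma:tec}) forces this to be a nonzero square in $\fq$; hence there are two distinct roots $r_1,r_2=1/r_1\in\fq^\times$, both different from $1$ since $\xi\ne\pm1$.

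For a fixed admissible $r$ I would count the points of $\ell$ with $A/B=r$ via the Möbius substitution $w=(\lambda-\theta)/(\lambda-\theta^q)$, a bijection of $\PG(1,q^2)$ under which $r=A/B=N(w)$ and the subline $\ell\cap\Sigma$ corresponds to the norm-one circle $\{w:w^{q+1}=1\}$. As the norm is $(q+1)$-to-one, each fibre $\{N(w)=r\}$ has exactly $q+1$ elements; since $r\ne1$ these lie off $\Sigma$, and being finite and nonzero they lie off $\cQ$. This gives $q+1$ points per root, hence $2(q+1)$ points of $\ell$ on $S_\xi\cup S_{-\xi}$, all off $\cH$ because $\xi\ne0$. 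The remaining task, and the main obstacle, is to show these split evenly, exactly $q+1$ landing on $S_\xi$. For this I would compute the value of $\xi$ attached to a point,
\[
\xi(P)=\frac{H}{2Q^{(q+1)/2}}=\frac{a^{(1-q)/2}}{2}\bigl(\zeta+\zeta^{-1}\bigr),\qquad \zeta:=w^{(q+1)/2},\ \ \zeta^2=N(w)=r,
\]
and observe that as $w$ runs through a norm fibre $\{N(w)=r\}$ the element $\zeta=w^{(q+1)/2}$ takes exactly two opposite values, each attained $(q+1)/2$ times, because the homomorphism $\omega\mapsto\omega^{(q+1)/2}$ of the cyclic group $\{w:w^{q+1}=1\}$ has image $\{\pm1\}$ with fibres of size $(q+1)/2$. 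Thus $\xi(P)$ takes the values $+\xi$ and $-\xi$ each $(q+1)/2$ times on each root-fibre; summing over $r_1,r_2$ gives precisely $q+1$ points with $\xi(P)=\xi$, that is $|\ell\cap\tS_\xi|=q+1$, all non-singular by the identity for $\Delta$ above.

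The delicate point is genuinely this even split, forced by the half-power $Q^{(q+1)/2}$ rather than $Q^{q+1}$; counting $\tS_\xi\cup\tS_{-\xi}$ is the easy half. An alternative resolution of the obstacle, avoiding the explicit $\zeta$-computation, is to invoke Proposition \ref{surfacesstabilized}: an element $A\otimes B$ of $G$ with $\det(A)\det(B)$ a nonsquare scales both $H$ and $Q$ by a nonsquare of $\fq$ and hence interchanges $S_\xi$ and $S_{-\xi}$; if such an element can be chosen to stabilise $\ell$, then $|\ell\cap\tS_\xi|=|\ell\cap\tS_{-\xi}|$, and the total $2(q+1)$ again forces each to equal $q+1$.
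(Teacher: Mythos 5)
Your proof is correct, but it takes a genuinely different route from the paper's. The paper fixes one representative external line (using the transitivity of $G$ on external $\Sigma$-sublines from Result \ref{group properties}), observes that membership in $S_\xi$ restricts to an equation of degree $q+1$ on the line, hence gives at most $q+1$ points per surface, and then forces equality by a pigeonhole count: the $q^2+1$ points of $\ell$ decompose as $q+1$ on $\Sigma$, $2$ on $\cQ$, $q+1$ on $\cH$, and none on any member of $\tcS^2$ (since all points of $\ell\setminus\Sigma$ are non-singular), so the remaining $(q-3)(q+1)$ points must saturate the bound on each of the $q-3$ surfaces of $\tcS^1$. You instead compute everything explicitly on an arbitrary external line: the identities $H=a(A+B)$, $Q^{q+1}=a^2AB$, $\Delta=a^2(A-B)^2$ are correct (given that you take genuine $\rho$-fixed representatives $v_0,v_1$, which is needed for the anisotropy of $as^2+bst+ct^2$ over $\fq$ and for $a,b,c\in\fq$), the reduction to $r^2-(4\xi^2-2)r+1=0$ with discriminant $16\xi^2(\xi^2-1)$ correctly ties the two-root case to $\cZ_1$, and the norm-fibre count plus the analysis of $u\mapsto u^{(q+1)/2}$ on the order-$(q+1)$ circle legitimately yields the even split between $S_\xi$ and $S_{-\xi}$. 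What your approach buys: it is independent of the transitivity result and of the prior classification of which orbits can meet an external line, it re-derives the non-singularity of $\ell\setminus\Sigma$ as a byproduct, and it gives finer information (the $(q+1)/2$-$(q+1)/2$ distribution on each norm fibre) that the paper's pigeonhole cannot see. What it costs is considerably more computation; the paper's soft argument gets the same count in a few lines once the orbit structure is in place. Your closing alternative via an element of $G$ swapping $S_\xi$ and $S_{-\xi}$ while stabilising $\ell$ is plausible but left conditional; it is not needed since your $\zeta$-computation already closes the argument.
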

\begin{proof}
By the transitivity of $G$ we can choose $\ell=P_1P_2$, with $P_1=(0,1,1,0)$ and $P_2=(0,1,-1,0)$. Points of $\ell$ are $P_1$ or of the form $(0,\lambda+1,\lambda-1,0)$, with $\lambda\in\F_{q^2}$. Such a point belongs to $\cS_\xi$ if and only if 
\[
2(\lambda^{q+1}+1)+2\xi(1-\lambda^2)^{(q+1)/2}=0.
\]
Since the equation above has degree $q+1$, there are at most $q+1$ points on each $\cS_\xi$. Since $\ell$ has precisely $q+1$ points on $\Sigma$, $2$ points on $\cQ$ and $q+1$ points on $\cH$, and can not have any point on a set contained in $\tcS^2$, the claim follows.
\end{proof}
\begin{corollary}
For any $S$ in $\cS^1$, the size of $S$ is $(q-1)^2q^2(q+1)/2$.
\end{corollary}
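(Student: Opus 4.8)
The plan is to compute the size of a single surface $\tS \in \tcS^1$ by a double-counting argument, using the transitivity of $G_0$ on $\Sigma$-lines external to $\cQ_0$ (Result \ref{group properties}) together with the point count per such line established in Proposition \ref{sigmalines external to q0}. Since every point of $PG(3,q^2)\setminus\Sigma$ lies on a unique $\Sigma$-line (Result \ref{points are on extended sigma line}), and the points of $\tS$ are by definition non-singular points lying outside $\Sigma$, I would count incidences between points of $\tS$ and external $\Sigma$-lines.

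\textbf{Key steps.} First I would recall from the Lemma preceding Proposition \ref{surfacesstabilized} that all points of $\tS\in\tcS^1$ are non-singular, and from the final Lemma of Section \ref{section nonsingular tensors geometry} that a point is non-singular precisely when it lies on an extended subline of $\Sigma$ external to $\cQ_0$. Hence every point of $\tS$ lies on a \emph{unique} such external $\Sigma$-line (uniqueness from Result \ref{points are on extended sigma line}, since non-singular points are outside $\Sigma$). Second, by Proposition \ref{sigmalines external to q0}, each external $\Sigma$-line meets $\tS$ in exactly $q+1$ points. Therefore
\[
|\tS| = (q+1)\cdot N,
\]
where $N$ is the number of external $\Sigma$-lines. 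Third, I would count $N$: the external $\Sigma$-lines are the lines of $\Sigma\simeq PG(3,q)$ external to the hyperbolic quadric $\cQ_0$. The number of external lines to $\cQ^+(3,q)$ is the standard value $N = q^2(q-1)^2/2$. Combining,
\[
|\tS| = (q+1)\cdot \frac{q^2(q-1)^2}{2} = \frac{(q-1)^2 q^2 (q+1)}{2},
\]
as claimed.

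\textbf{The main obstacle} is the bookkeeping in the line count $N$, and ensuring the incidence count is genuinely a clean product rather than requiring a correction term. Specifically, I must verify that no point of $\tS$ is double-counted or omitted: this rests on the fact that a non-singular point lies on exactly one external $\Sigma$-line, so the map sending a point of $\tS$ to its containing external line is well-defined, and each fibre has size exactly $q+1$ by Proposition \ref{sigmalines external to q0}. The count of external lines $N=q^2(q-1)^2/2$ follows from the classification of the $(q^2+1)(q^2+q+1)$ lines of $PG(3,q)$ into generators, tangents, secants, and external lines relative to $\cQ^+(3,q)$; I would either cite this standard enumeration or derive it quickly from the transitivity of $G_0$ on external sublines (Result \ref{group properties}) together with the known stabiliser order. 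Once $N$ is pinned down, the corollary follows immediately from the product formula, so the only real care needed is confirming the line-enumeration constant.
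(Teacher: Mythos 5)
Your proof is correct and follows essentially the same route as the paper: apply Proposition \ref{sigmalines external to q0} to get $q+1$ points of $\tS$ on each $\Sigma$-line external to $\cQ_0$, note that each point of $\tS$ lies on exactly one such line, and multiply by the standard count $q^2(q-1)^2/2$ of external lines to a hyperbolic quadric in $\PG(3,q)$. Your explicit justification that the incidence count is a clean product (via nonsingularity of the points of $\tS$ and Result \ref{points are on extended sigma line}) is a detail the paper leaves implicit, but the argument is the same.
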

\begin{proof}
By Propositions \ref{surfacesstabilized} and \ref{sigmalines external to q0}, the size of $S$ is
\[
\mid (\cQ\cap \cH)\mid + \mid\{\Sigma\textit{-lines external to }\cQ_0\}\mid(q+1).
\]
Since the number of $\Sigma$-lines external to $\cQ_0$ is $q^2(q-1)^2/2$, the claim follows.
\end{proof}

\begin{lemma}\label{hermitian cone preparatory lemma}
Let $P\in\cQ_0$ and $R\in S\cap P^\perp$, with $S\in\cS$. Then the line $\ell=PR$ lies on $S$. 
\end{lemma}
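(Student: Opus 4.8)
The plan is to restrict both forms $Q$ and $H$ to the line $\ell = PR$ and show that on $\ell$ the defining equation of $S = S_\xi$ factors out a common power of a single parameter, so that it vanishes identically once we know $R \in S$. I would write a general point of $\ell$ as $X = \lambda P + \mu R$, and introduce the bilinear form $B_Q$ polarizing $Q$ together with the nondegenerate Hermitian form $h$ satisfying $H(X) = h(X,X)$, so that $\perp = \perp_\cH$ is the polarity $h(P,\cdot)=0$.

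First I would record three vanishing conditions. Since $P\in\cQ_0$ lies on both $\cQ$ and $\cH$, we have $Q(P)=0$ and $H(P)=h(P,P)=0$. Since $R\in P^\perp$, the Hermitian cross term $h(P,R)$ (and its conjugate $h(R,P)=h(P,R)^q$) vanishes. The third, and crucial, condition is that the quadratic cross term $B_Q(P,R)$ also vanishes: here I would invoke that the polarities $\perp_\cQ$ and $\perp_\cH$ commute, so that at the intersection point $P\in\cQ_0$ their tangent planes coincide, $P^{\perp_\cH}=P^{\perp_\cQ}$. Hence $R\in P^\perp$ already places $R$ in $P^{\perp_\cQ}$, giving $B_Q(P,R)=0$. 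I expect this to be the main obstacle of the argument: everything hinges on the single hypothesis $R\in P^\perp$ simultaneously annihilating the off-diagonal terms of \emph{both} $Q$ and $H$, which is exactly what the commuting of the two polarities at $P$ delivers.

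With these in hand the computation is routine. Expanding the quadratic form, the $\lambda^2$ and $\lambda\mu$ contributions drop out, leaving $Q(\lambda P+\mu R)=\mu^2 Q(R)$; expanding the Hermitian form, the $\lambda^{q+1}$ and the two mixed terms drop out, leaving $H(\lambda P+\mu R)=\mu^{q+1}H(R)$. Finally I would substitute into $H-2\xi\,Q^{(q+1)/2}$ and use $(\mu^2)^{(q+1)/2}=\mu^{q+1}$ (valid since $q$ is odd) to obtain
\[
H(X) - 2\xi\, Q(X)^{(q+1)/2} = \mu^{q+1}\bigl(H(R) - 2\xi\, Q(R)^{(q+1)/2}\bigr).
\]
As $R\in S\in\cS$, the bracketed factor is zero, so every point $X$ of $\ell$ satisfies the equation of $S$; that is, $\ell = PR \subseteq S$, as claimed.
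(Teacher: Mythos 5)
Your proof is correct and takes essentially the same route as the paper's: restrict the defining equation of $S_\xi$ to the line $PR$ and observe that every cross term vanishes because $P\in\cQ_0$ kills the constant terms while $R$ lies in the common tangent plane $P^{\perp_\cQ}=P^{\perp_\cH}$ kills the mixed ones, so the equation factors as $\mu^{q+1}$ times its value at $R$. The paper performs the identical computation in coordinates after normalising $P=(1,1,1,1)$, where both cross terms reduce to the single linear form $x-y-z+t$; your coordinate-free appeal to the commuting polarities is the same fact stated invariantly.
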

\begin{proof}
By the transitivity of $G$, we can assume $P$ to be the point $(1,1,1,1)$. Let $R=(x,y,z,t)$ be a point in $S_\xi\cap P^\perp$. Then $x-y-z+t=0$.
The point $P_\lambda=P+\lambda R$ of $\ell$ belongs to $S_\xi$ if and only if 
$H(P_\lambda)-2\xi Q(P_\lambda)^{(q+1)/2}=0$.
Since $H (P_\lambda)=\lambda^{q+1}H (R)+\lambda^q(x-y-z+t)+\lambda(x-y-z+t)^q$ and $Q(P_\lambda)=\lambda^2Q(R)+\lambda(x-y-z+t)$, the claim follows.
\end{proof}

\begin{proposition}\label{hermitian cone}
Let $P$ be a point of $\cQ_0$ and denote by $\cL_P$ the pencil of lines through $P$ contained in the $\cQ_0$-tangent plane $P^\perp$. Then $\cL_P$ contains:
\begin{itemize}
\item $2$ $\cQ_0$-generators;
\item $q-1$ $\cQ_0$-tangent $\Sigma$-lines;
\item $q-1$ lines contained in $\cH$; 
\item $q-1$ lines contained in the surface $S$, for each $S$ in $\cS^2$.
\end{itemize}
Moreover, all the points different from $P$ of the $q-1$ lines contained in $\cH$ are in $\cH_2$ and all the points different from $P$ of the $q-1$ lines contained in $S$ are in $\tS$. 
\end{proposition}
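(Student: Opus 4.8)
The plan is to reduce to a convenient base point and then read off all four line-types from an explicit cone equation on the tangent plane. By Result~\ref{group properties} the group $G_0$ is transitive on $\cQ_0$, so it suffices to treat the point $P=(1,1,1,1)$, whose polar plane is $P^\perp=\{x-y-z+t=0\}$ (as in Lemma~\ref{hermitian cone preparatory lemma}). First I would coordinatise $P^\perp$ by a basis consisting of $P$ together with one point on each of the two generators of $\cQ_0$ through $P$, writing a general point as $R=(u:v:w)$. A short computation then gives
\[
Q(R)=-vw,\qquad H(R)=vw^q+v^qw,
\]
both independent of $u$; this independence is exactly the cone structure of Lemma~\ref{hermitian cone preparatory lemma}, and it shows that every $S_\xi\cap P^\perp$ is a union of lines through $P$, so it is enough to work with the direction $\zeta=v/w\in\PG(1,q^2)$ of a line in the pencil $\cL_P$.

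Next I would separate the $\Sigma$-lines from the rest. Computing the action of the collineation $\rho$ (which fixes $\Sigma$) in these coordinates shows that a line of $\cL_P$ is a $\Sigma$-line precisely when $\zeta\in\Fq\cup\{\infty\}$. The two values $\zeta=0,\infty$ are the two generators of $\cQ_0$ through $P$, and the $q-1$ values $\zeta\in\Fq^\times$ are exactly the tangent $\Sigma$-lines (a standard fact about the tangent plane of a hyperbolic quadric, here in $\pi_0=P^\perp\cap\Sigma$); this accounts for the first two bullets. The remaining $q^2-q=q(q-1)$ directions $\zeta\in\F_{q^2}\setminus\Fq$ give the non-$\Sigma$-lines, and for each of these the scalar $\xi$ with $\ell\subset S_\xi$ is $\xi=(\zeta+\zeta^q)/\big(2(-\zeta)^{(q+1)/2}\big)$, constant along $\ell$ by the cone lemma.

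From here I would finish by a direct arithmetic analysis of the map $\zeta\mapsto\xi$. The vanishing $\xi=0$ happens exactly when $\Tr(\zeta)=\zeta+\zeta^q=0$; there are $q-1$ such nonzero directions, and they furnish the $q-1$ lines lying in $\cH=S_0$ (these together with the two generators make up the $q+1$ lines of the Hermitian cone $\cH\cap P^\perp$). For $\Tr(\zeta)\neq0$ a computation of $1-1/\xi^2=(\zeta-\zeta^q)^2/(\zeta+\zeta^q)^2$ shows this quantity is always a nonsquare of $\Fq$, so $\xi\in\cZ_2$ and no non-$\Sigma$-line can lie in a surface of $\cS^1$ — consistent with $P^\perp$ being a tangent plane of $\cQ_0$ and hence containing only singular points (Proposition~\ref{nonsingularity}), while $\cS^1$ contributes only nonsingular points. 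For the exact count per surface of $\cS^2$ I would use the substitution $\zeta\mapsto c\zeta$, $c\in\Fq^\times$, under which $\xi\mapsto\chi(c)\,\xi$ with $\chi$ the quadratic character of $\Fq$; grouping the $(q-1)^2$ directions with $\Tr(\zeta)\neq0$ into the $q-1$ orbits of this $\Fq^\times$-action, each value $\xi^\ast\in\cZ_2$ is attained on exactly two orbits (those with opposite sign of $b/a$ in $\zeta=a+b\omega$) and on half of each orbit, giving precisely $q-1$ lines in each $S\in\cS^2$.

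Finally, the ``moreover'' statement falls out of the same coordinates: along a line with $\Tr(\zeta)\neq0$ one has $H(R)=\zeta+\zeta^q\neq0$ at every point $R\neq P$, so such points avoid $\cH$ and therefore lie in $\tS$; along a line with $\Tr(\zeta)=0$ the points lie in $\cH\setminus\cQ$ but on the tangent plane $P^\perp$, hence are singular, i.e.\ in $\cH_2$. I expect the main obstacle to be the last count: since $G_0$ \emph{fixes} each surface of $\cS^2$ individually (Proposition~\ref{surfacesstabilized}) rather than permuting them, the uniform distribution ``$q-1$ lines in each'' does not follow from symmetry and genuinely requires the quadratic-character bookkeeping above, leaning on the description of $\cZ_2$ in Lemma~\ref{lemma:tec}.
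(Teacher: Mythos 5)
Your proof is correct, but it takes a genuinely different route from the paper's for the substantive part (the fourth bullet and the ``moreover''). The paper picks a second point $R\in\cQ_0$ off $P^\perp$ and counts points on the transversal secant line $P^\perp\cap R^\perp$: it pins down the contributions of $\cQ_0$, $\Tilde{\Sigma}$ and $\cH_2$, rules out $\cH_1$, $\Tilde{\cD}$ and $\tcS^1$ via singularity on tangent planes, and then forces $|{\ell\cap\tS}|=q-1$ for every $\tS\in\tcS^2$ by the degree bound $q+1$ together with the pigeonhole identity $2+2(q-1)+(q-1)^2=q^2+1$; Lemma~\ref{hermitian cone preparatory lemma} then converts points on the transversal into lines of the pencil. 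You instead parametrise the pencil by the direction $\zeta\in\PG(1,q^2)$, observe that $Q$ and $H$ restricted to $P^\perp$ depend only on $\zeta$ (recovering the cone lemma as a byproduct), and compute the fibres of the explicit map $\zeta\mapsto\xi=(\zeta+\zeta^q)/(2(-\zeta)^{(q+1)/2})$ via the identity $1-1/\xi^2=(\zeta-\zeta^q)^2/(\zeta+\zeta^q)^2$ and the twist $\xi(c\zeta)=\chi(c)\xi(\zeta)$; I checked the fibre count (two $\Fq^\times$-orbits per pair $\{\xi,-\xi\}$, each split evenly by the quadratic character) and it does give exactly $q-1$ lines per surface of $\cS^2$. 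Your method is more computational but yields strictly more information --- it tells you \emph{which} $S_\xi$ each line of the pencil lies in, and it shows directly (rather than by elimination) that no line lands in a surface of $\cS^1$ --- whereas the paper's pigeonhole is shorter and avoids coordinates at the cost of needing the exact counts on the auxiliary secant line. Your correct observation that the uniformity of the distribution over $\cS^2$ cannot be obtained from the $G_0$-action (since $G_0$ fixes each $S_\xi$) is precisely why the paper resorts to the maximality/pigeonhole argument; your character computation is a legitimate, if heavier, substitute.
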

\begin{proof}
The first two points are consequence of $\cQ_0$ being an hyperbolic quadric.
Since $P$ belongs to $\cH$ as well, the Hermitian surface $\cH$ meets $P^\perp$ in an Hermitian cone, consisting of $q+1$ lines. Since two of these lines are $\cQ_0$-generators, the third claim follows as well.

By Lemma \ref{hermitian cone preparatory lemma}, it is enough to show that $P^\perp$ contains $q-1$ points of $S$. To see this, consider a second point $R\in\cQ_0$, $R\neq P$. 
The line $P^\perp \cap R^\perp$ has $q-1$ points in $\cH_2$, two points in $\cQ_0$, no points in $\hat{S}$ for $\hat{S}\in\tcS^1$, no point in $\cH_1$ and no point in $\Tilde{\cD}$. Since each of the surfaces $S$ has degree $q+1$, it has at most $q+1$ points of $P^\perp \cap R^\perp$, and since two are in $\cQ_0$, at most $q-1$ further points in $\tS$. Since the line must have $q^2+1$ points, which is obtained by taking the maximum possible intersections, the claim follows. 
\end{proof}
As a consequence, we obtain an analogous of Proposition \ref{sigmalines external to q0} for $\Sigma$-lines secant to $\cQ_0$ and points of $\tS$ with $\tS\in\tcS^2$.
\begin{proposition}\label{secant lines meet S2}
A $\Sigma$-lines secant to $\cQ_0$ contains $q+1$ points of $\tS$, for any $\tS$ in $\tcS^2$.
\end{proposition}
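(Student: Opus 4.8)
The plan is to count the points of $S_\xi$ on a secant $\Sigma$-line $\ell$ for $\xi\in\cZ_2$ and to show this number is exactly $q+1$; two of these points lie on $\cQ_0\subseteq\cH$ and are the base points of the pencil $\cS$, the remaining $q-1$ lying off $\cH$. By Result~\ref{group properties} the group $G_0$ is transitive on the $\Sigma$-sublines secant to $\cQ_0$, so I may fix one such line and write $\ell\cap\cQ_0=\{P,R\}$. First I record the incidences of $\ell$. Since $\cQ_0=\cQ\cap\Sigma=\cH\cap\Sigma$, the points $P,R$ lie in the base $\cQ\cap\cH$ of the pencil and hence on every member $S_\xi$. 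Being a $\Sigma$-line, $\ell$ meets $\Sigma$ in a subline of $q+1$ points, two on $\cQ_0$ and $q-1$ in $\tilde\Sigma$. Moreover $\ell\not\subseteq\cQ$ (else $\ell\cap\Sigma\subseteq\cQ_0$, contradicting $|\ell\cap\cQ_0|=2$), so $\ell\cap\cQ=\{P,R\}$; and $\ell\not\subseteq\cH$, since a generator of $\cH$ through $P$ lies in the tangent plane $P^\perp$, and $R\in P^\perp$ would force $PR$ to be a generator or a tangent of $\cQ_0$ as the two polarities commute. Thus $|\ell\cap\cH|=q+1$, the $q-1$ points other than $P,R$ lying in $\cH\setminus\cQ$.

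Next I distribute the $q^2-1$ points of $\ell$ lying off the base; each has $Q\neq0$ and so lies on a unique member $S_\xi$ with $\xi$ finite. The $q-1$ points of $(\cH\setminus\cQ)\cap\ell$ satisfy $H=0$, hence $\xi=0$, and lie on $S_0=\cH$; the $q-1$ points of $\tilde\Sigma\cap\ell$ lie on $\cD=S_1\cup S_{-1}$ (recall $\tilde\Sigma\subseteq\cD$ and $\tilde\Sigma\cap\cH=\varnothing$), hence have $\xi=\pm1$. This leaves exactly $(q^2-1)-2(q-1)=(q-1)^2$ off-base points. All of these lie off $\Sigma$: the $q+1$ points of $\ell\cap\Sigma$ are already exhausted ($2$ on $\cQ_0$, $q-1$ in $\tilde\Sigma$). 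By the characterisation of nonsingular points, a point off $\Sigma$ is nonsingular exactly when its unique $\Sigma$-line (Result~\ref{points are on extended sigma line}) is external to $\cQ_0$; as $\ell$ is secant, every point of $\ell\setminus\Sigma$ is singular. Since the members $S_\xi$ with $\xi\in\cZ_1$ consist of nonsingular points and those with $\xi\in\cZ_2$ of singular points, all $(q-1)^2$ remaining points lie on $\cZ_2$-surfaces and none on a $\cZ_1$-surface.

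I then finish by a pigeonhole argument in the style of Proposition~\ref{sigmalines external to q0}. Each $S_\xi$ has degree $q+1$ and $\ell\not\subseteq S_\xi$ (the $\tilde\Sigma$-points of $\ell$ lie on $\cD$, not on $S_\xi$ for $\xi\neq\pm1$), so $|S_\xi\cap\ell|\le q+1$; deleting the two base points $P,R\in S_\xi$ leaves at most $q-1$ off-base points on each $\cZ_2$-surface. By Lemma~\ref{lemma:tec} there are $|\cZ_2|=q-1$ such surfaces, so at most $(q-1)^2$ off-base points can lie on them in total. Since I have produced exactly $(q-1)^2$ such points, each on a unique member, every bound is attained: for each $\xi\in\cZ_2$ the line $\ell$ carries precisely $q-1$ off-base points of $S_\xi$, which together with $P,R$ give the asserted $q+1$ points of $S_\xi$ on $\ell$.

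The delicate point, and the place where care is essential, is the bookkeeping of the two base points $P,R$. Lying on $\cQ_0\subseteq\cH$, they belong to every member of the pencil and to $\cH$; they therefore count among the $q+1$ points of $S_\xi$ on $\ell$ while contributing nothing off $\cH$, and it is exactly these two points that raise the $q-1$ ``free'' intersections to the stated total $q+1$. I expect the geometric input that all $(q-1)^2$ leftover points are singular --- and hence forced onto $\cZ_2$-surfaces --- to be the real content, the remainder being counting. As a cross-check one can argue explicitly, parametrising $\ell$ through $P,R$ and reducing $X\in S_\xi$ to a homogeneous equation of degree $q+1$ whose roots include $P,R$; the substitution $w=t^{(q-1)/2}$ converts it into a quadratic in $w$ over the group of $2(q+1)$-th roots of unity, and for $\xi\in\cZ_2$ both roots are admissible and yield the remaining $q-1$ points.
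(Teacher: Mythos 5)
Your proof is correct and is essentially the paper's own argument: the paper gives no separate proof of Proposition \ref{secant lines meet S2}, deriving it instead from Proposition \ref{hermitian cone}, whose proof already performs exactly your degree-bound pigeonhole on a secant $\Sigma$-line (the polar line $P^\perp\cap R^\perp$) --- two base points in $\cQ_0$, at most $q-1$ further points of each $S_\xi$ with $\xi\in\cZ_2$, and the total $q^2+1$ forcing equality --- with your version additionally making explicit what the paper leaves implicit, namely the exclusion of the $\cZ_1$-surfaces, $\cH_1$ and $\Tilde{\cD}$ via the singular/nonsingular dichotomy, and the $q-1$ points of $\Tilde{\Sigma}$ on the line, without which the paper's listed intersections do not sum to $q^2+1$. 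Your bookkeeping of the base points also pinpoints a genuine imprecision in the statement itself: since $\tS=S_\xi\setminus\cH$ and $P,R\in\cQ_0\subseteq\cH$, a secant line carries exactly $q-1$ points of $\tS$ but $q+1$ points of $S_\xi$, so the asserted count of $q+1$ is accurate only under your reading, in which the two base points on $\cQ_0$ are included.
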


We can characterize the various sets we defined so far in terms of the number of $\cQ_0$-tangent planes passing through each point. 
\begin{lemma}\label{number of tg planes per point}
For a point $P\in PG(3,q^2)$, the following holds true:
\begin{itemize}
\item if $P$ belongs to $\cQ_0$, then $P$ is contained in precisely $2q+1$ $\cQ_0$-tangent planes;

\item if $P$ belongs to $\Tilde{\Sigma}$ or $\cT$ then $P$ is contained in precisely $q+1$ $\cQ_0$-tangent planes;
\item if $P$ belongs to $\Tilde{\cQ}$, any $S$ with $S\in\tcS^1$ or $\cH_1$, then $P$ is contained in no $\cQ_0$-tangent planes;
\item if $P$ belongs to $\Tilde{S}$ with $\Tilde{S}\in\tcS^2$ or $\cH_2$, then $P$ is contained in precisely two $\cQ_0$-tangent planes;
\item if $P$ belongs to $\Tilde{\cD}$ then $P$ is contained in precisely one $\cQ_0$-tangent plane.
\end{itemize}
\end{lemma}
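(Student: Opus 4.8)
The plan is to count, for each type of point $P$, the number of $\cQ_0$-tangent planes through $P$, exploiting the transitivity of $G_0$ (Result \ref{group properties}) and the geometric dictionary already established: by Result \ref{points are on extended sigma line} every point lies either in $\Sigma$ or on a unique $\Sigma$-line, and the type of that line (generator, tangent, secant, or external to $\cQ_0$) governs all the intersection behaviour. The key observation is that a $\cQ_0$-tangent plane is a $\Sigma$-plane (it is $R^\perp$ for $R\in\cQ_0$), so a point $P\notin\Sigma$ lies on $R^\perp$ exactly when its associated $\Sigma$-line lies in $R^\perp$, and hence $P$ lies on a $\cQ_0$-tangent plane precisely when its contraction line is a tangent, generator, or secant of $\cQ_0$ contained in such a plane.

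First I would handle the two cases in $\Sigma$. For $P\in\cQ_0$, the $\cQ_0$-tangent planes through $P$ are $R^\perp$ with $P\in R^\perp$, i.e. $R\in P^\perp\cap\cQ_0$; since $P^\perp$ meets the hyperbolic quadric $\cQ_0$ in two generators through $P$, carrying $2q+1$ points of $\cQ_0$ (the point $P$ together with $2q$ others), this gives $2q+1$ tangent planes. For $P\in\tilde\Sigma$ (a point of $\Sigma$ off $\cQ_0$), the tangent planes through $P$ correspond to $R\in\cQ_0\cap P^\perp$; here $P^\perp$ is a non-tangent $\Sigma$-plane meeting $\cQ_0$ in a conic of $q+1$ points, giving $q+1$ tangent planes. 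These are standard facts about hyperbolic quadrics in $\PG(3,q)$, which I may cite directly.

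Next, for $P\notin\Sigma$, I would use the classification of $\Sigma$-lines together with Propositions \ref{sigmalines external to q0}, \ref{secant lines meet S2}, and the cone structure in Proposition \ref{hermitian cone}. A point $P\notin\Sigma$ lying on an \emph{external} $\Sigma$-line meets no tangent plane in a point outside $\Sigma$ (as in the proof of the proposition preceding Lemma \ref{number of tg planes per point}), accounting for the zero count for $\tilde\cQ$, $\tilde\cS^1$, and $\cH_1$. A point on an extended \emph{tangent} line lies on exactly one tangent plane by Proposition \ref{delta properties}, giving the count $1$ for $\tilde\cD$. The remaining cases — $\cT$ (on an extended generator) and the singular families $\tilde\cS^2$ and $\cH_2$ — require counting how many of the tangent planes through the endpoints of the contraction line actually contain $P$; for $\cT$, the unique $\Sigma$-generator through $P$ lies in the two tangent planes at its two $\cQ_0$-points plus those along the generator, and for $\tilde\cS^2$ and $\cH_2$ the cone analysis of Proposition \ref{hermitian cone} shows each such $P$ lies on a line $PR$ inside a surface $S\in\cS^2$ (resp. inside $\cH$) through exactly two points of $\cQ_0$, yielding the count $2$.

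The main obstacle I expect is the bookkeeping for the $\cT$ case and for $\tilde\cS^2$, $\cH_2$: one must verify that no \emph{additional} tangent planes pass through $P$ beyond those forced by its contraction line. The clean way to do this is a double-counting / incidence argument, summing the number of (point, tangent-plane) incidences in two ways: the total number of tangent planes is $|\cQ_0| = (q+1)^2$, each carrying a fixed number of points of each type, and equating this against the per-point counts already determined for the other classes pins down the unknown counts for $\cT$, $\tilde\cS^2$, and $\cH_2$ uniquely. This consistency check, combined with the direct geometric arguments above, completes the proof.
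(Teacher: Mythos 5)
Your opening observation is exactly the paper's key idea: a $\cQ_0$-tangent plane is an extended $\Sigma$-plane, so for $P\notin\Sigma$ the tangent planes through $P$ are precisely those containing the unique $\Sigma$-line through $P$, and the two cases $P\in\cQ_0$, $P\in\Tilde{\Sigma}$ follow by duality from $|P^\perp\cap\cQ_0|$ being $2q+1$ or $q+1$. The cases handled by external lines (count $0$) and extended tangent lines (count $1$) also match the paper. The gap is in how you close the remaining cases $\cT$, $\cH_2$ and $\tS\in\tcS^2$. Having reduced to counting tangent planes that contain a fixed $\Sigma$-line $\ell$, you should finish locally: the planes $R^\perp$ containing $\ell$ correspond to points $R\in\ell^\perp\cap\cQ_0$, and for a generator $\ell^\perp=\ell$ gives $q+1$, while for a secant $\ell^\perp=R_1^\perp\cap R_2^\perp$ is again a $2$-secant, giving exactly $2$. (Equivalently, as the paper argues: for $P\in\cT$ the $q+1$ planes $Q^\perp$, $Q$ on the generator, all contain $P$, and they are all of them because $P^\perp$ meets $\Sigma$ in a single subline; for $P\in\cH_2$ or $\tS\in\tcS^2$, Proposition \ref{hermitian cone} puts $P$ on a unique $\cQ_0$-secant, which lies on precisely two tangent planes.) Your worry about ``additional'' tangent planes is thus dispatched by the very reduction you stated at the outset — no global argument is needed.

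The double-counting fallback you propose does not work as described. A single incidence identity (summing $|R^\perp\cap\cO|$ over the $(q+1)^2$ tangent planes against $\sum_\cO n_\cO|\cO|$) is one equation and cannot determine three unknown per-point counts. Running a separate double count for each of $\cT$, $\cH_2$ and each $\tS\in\tcS^2$ would require both $|R^\perp\cap\cO|$ for $R\in\cQ_0$ and the orbit sizes $|\cO|$; the latter (e.g.\ $|\tS|$ for $\tS\in\tcS^2$, listed in Theorem \ref{pointorbits}) and the former (Proposition \ref{Q0tgplane pts distribution}) are established in the paper only \emph{after}, and partly \emph{by means of}, this lemma, so you would need to supply independent derivations to avoid circularity. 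One further small slip: a $\Sigma$-generator has $q+1$ points on $\cQ_0$, not two, so the phrase ``the two tangent planes at its two $\cQ_0$-points'' in your treatment of $\cT$ is not correct as written; the correct count $q+1$ comes from the $q+1$ tangent planes at the points of the generator, together with the observation that there are no others.
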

\begin{proof}
By duality, if $P$ belongs to $\cQ_0$ or $\Tilde{\Sigma}$, then $P^\perp$ meets $\cQ_0$ in either a degenerate or a non-degenerate conic, explaining the first two points.
If $P$ belongs to $\cT$, then it is on $Q^\perp$ for any $Q$ in the (unique) extended generator in which $P$ lies. Observe that, since $P^\perp$ meets $\Sigma$ precisely one sub-line, these are all.
By Lemma \ref{nonsingular iff external or no q0 tg plane}, points on $\cH_1$ and on $S$, with $S\in\tcS^1$ are on no $\cQ_0$-tangent planes.
Finally, let $P$ be on at least three $\cQ_0$-tangent planes. Then $P^\perp$ has at least three points of $\Sigma$. If they are on a line, that must be a $\cQ_0$-generator, otherwise, they are on a conic, in both case $P$ will belong to the tangent plane to each of those $q+1$ points.
By Proposition \ref{hermitian cone}, each point of $\cH_2$ and any $\tS$ in $\tcS^2$ lies on a $\cQ_0$-secant, which is unique by Result \ref{points are on extended sigma line}. As a consequence, since $\cQ_0$-secants lay on precisely two $\cQ_0$-tangent planes, the claim follows.
\end{proof}

\begin{proposition}
Each of the sets $\tS$, with $\tS\in\tcS$ is an orbit under the action of $G_0$. The sets $\cH_1$ and $\cH_2$ are orbits under the action of $G_0$.
\end{proposition}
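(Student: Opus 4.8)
The goal is to show that each set $\tS$ (for $\tS\in\tcS$), as well as $\cH_1$ and $\cH_2$, forms a single orbit under the action of $G_0$. By Proposition \ref{surfacesstabilized} we already know that $G_0$ \emph{preserves} each of these sets, so the content of the present statement is the \emph{transitivity} of the action on each set. The plan is to establish transitivity by reducing to the known transitivity properties of $G_0$ recorded in Result \ref{group properties}, using the geometric description of the points of each set via extended $\Sigma$-lines together with the incidence data collected in the preceding lemmas.

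\textbf{Key steps, in order.} First I would treat the sets $\tS$ with $\tS\in\tcS^1$. By Proposition \ref{sigmalines external to q0}, every point of such a $\tS$ lies on a $\Sigma$-line external to $\cQ_0$, and by Result \ref{points are on extended sigma line} this external line is unique. Since $G_0$ acts transitively on the external $\Sigma$-lines (Result \ref{group properties}), I can first move any point $P\in\tS$ so that it lies on a fixed external line $\ell$. It then remains to show that the stabiliser of $\ell$ in $G_0$ acts transitively on the $q+1$ points of $\tS\cap\ell$; here I would use that the subgroup $A\otimes B$ with $A,B\in G_0$ induces a group on $\ell$ (parametrised as in the proof of Proposition \ref{sigmalines external to q0}) acting transitively on the relevant points, which follows from the explicit parametrisation $(0,\lambda+1,\lambda-1,0)$ and the freedom in choosing $A,B$. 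For the sets $\tS$ with $\tS\in\tcS^2$ I would argue symmetrically, replacing external $\Sigma$-lines by \emph{secant} $\Sigma$-lines and invoking Proposition \ref{secant lines meet S2} in place of Proposition \ref{sigmalines external to q0}, again using that $G_0$ is transitive on $\cQ_0$-secant $\Sigma$-lines. Finally, for $\cH_1$ and $\cH_2$, I would exploit the description in Proposition \ref{hermitian cone}: each point of $\cH_2$ lies on a unique $\cQ_0$-secant through a point $P\in\cQ_0$, and $G_0$ is transitive on the points $P\in\cQ_0$ and on the secant lines through $P$; transitivity on $\cH_2$ then reduces to transitivity of the line-stabiliser on the $q-1$ points of $\cH_2$ on a fixed such line. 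The set $\cH_1$ would be handled analogously, using the external lines within the Hermitian cone structure.

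\textbf{The main obstacle.} The conceptual framework is straightforward: in each case transitivity on the set is factored through transitivity on an associated family of $\Sigma$-lines (already supplied by Result \ref{group properties}) followed by transitivity of the line-stabiliser on the distinguished points of the line. The delicate point I expect to be the hardest is verifying this \emph{second} factor, namely that the stabiliser in $G_0$ of a fixed $\Sigma$-line $\ell$ of the appropriate type acts transitively on the $q+1$ (or $q-1$) points of $\tS\cap\ell$ (resp. $\cH_2\cap\ell$). This requires computing the permutation action of the line-stabiliser on $\ell$ and checking it is large enough; I would do this by choosing the canonical representative line as in Proposition \ref{sigmalines external to q0}, computing how matrices $A\otimes B$ with $A,B\in G_0$ act on the parameter $\lambda$, and confirming that the resulting group acts transitively on the parameter values cut out by the equation $H(P_\lambda)-2\xi Q(P_\lambda)^{(q+1)/2}=0$. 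Once this local transitivity is established, combining it with the global line-transitivity from Result \ref{group properties} and the partition of points by their unique contraction line (Result \ref{points are on extended sigma line}) yields the claim for all the sets simultaneously.
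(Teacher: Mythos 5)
Your reduction is the same as the paper's: use the fact that every point of $\tS$ lies on a unique $\Sigma$-line of a fixed type (external to $\cQ_0$ for $\tcS^1$, secant for $\tcS^2$), invoke transitivity of the group on lines of that type, and reduce to showing that the stabiliser of one such line $\ell$ acts transitively on $\tS\cap\ell$. However, you have correctly identified the hard step and then not supplied an argument for it: saying you would ``confirm that the resulting group acts transitively on the parameter values cut out by the equation $H(P_\lambda)-2\xi Q(P_\lambda)^{(q+1)/2}=0$'' restates the goal rather than proving it. The induced group on $\ell$ is small (the stabiliser $H=G_\ell\cap\hat G$ has order $2(q-1)^2$ in the secant case and $2(q+1)^2$ in the external case, acting on $\ell$ with a large kernel), and the set $\tS\cap\ell$ is a specific set of roots of a degree-$(q+1)$ equation; there is no direct, explicit way offered to see that these roots form one orbit, and a naive attempt to track the action on the parameter $\lambda$ does not obviously close the argument.

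The paper's key idea, which is missing from your proposal, is an \emph{indirect} counting argument: one computes the exact number of fixed points of each element of $H$ on $\ell$, applies the orbit-counting (Burnside) lemma to get that $H$ has exactly $(q+5)/2$ orbits on the $q^2+1$ points of $\ell$ (respectively $(q+3)/2$ in the external case), and then observes that $\ell$ is already partitioned into that many nonempty $H$-invariant pieces ($\{P_1,P_2\}$, $\Tilde\Sigma\cap\ell$, $\cH_2\cap\ell$, and the sets coming from $\tcS$); by pigeonhole each piece must be a single orbit. Without this step (or some substitute for it), your proof does not go through: preservation of the sets by $G_0$ (Proposition \ref{surfacesstabilized}) plus transitivity on lines only shows that each $\tS$ is a \emph{union} of orbits meeting every line of the given type. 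You would need to either reproduce the fixed-point count and the orbit-number computation, or find another mechanism forcing the line stabiliser to act transitively on each piece.
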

\begin{proof}
First, suppose $\tS\in\tcS^2$.
Since each point of $\tS$ lies on precisely one $\cQ_0$-secant $\ell$ and the group $G$ is transitive on such lines, it is enough to show that the stabilizer $G_\ell$ of $\ell$ is transitive on the points of $\ell\cap\tS$. We will show that this is true for a subgroup $H$ of $G_\ell$. The group $G$ has a normal subgroup $\hat{G}$ of index two isomorphic to $PGL(2,q)\times PGL(2,q)$. Let $H$ be $G_\ell\cap\hat{G}$. Without loss of generality, we can fix $\ell$ to be the line $P_1P_2$ with $P1=e_1\otimes f_1$ and $P_2=e_2\otimes f_2$. Any $g\in H$ is of the form $g=g_1\times g_2$, acting on $v\otimes w$ as $(v\otimes w)^g=v^{g_1}\otimes w^{g_2}$. Observe that the group we are dealing with is not directly a subgroup of $PGL(2,q)\times PGL(2,q)$ in its canonical representation, but, since we only care about the orbit structure, we will use the canonical representation.
By the hypothesis $g\in H$, it follows $P_i^g=P_i$ or $P_i^g=P_j$, with $i\neq j$ and $i,j\in\{1,2\}$. This implies that $g_1$ acts on $e_1,e_2$ as $e_1^{g_1}=\lambda_1e_1$ and $e_2^{g_1}=\lambda_2e_2$ or as $e_1^{g_1}=\mu_1e_2$ and $e_2^{g_1}=\mu_2e_1$ and $g_2$ acts on $f_1,f_2$ fixing both. In particular, $H$ has size $2(q-1)^2$. Remember that, since $H$ fixes $\ell$, it must induce a subgroup of $PGL(2,q)$. The induced action could have non-trivial kernel. Any transformation of $H$ acting non trivially on $\ell$ will have either $0$, $1$ or $2$ fixed points. Since only elements of order $p$ fix precisely one point and elements of order dividing $q^2-1$ have two fixed points, all the elements with non-trivial action, have two fixed points.
Observe that $g\in H$ fixes all the points $P_\lambda\in\ell$ if and only if acts of on $e_1,e_2$ as $e_1^{g_1}=\lambda_1e_1$ and $e_2^{g_1}=\lambda_2e_2$ with $\lambda_1=\lambda_2$. Therefore, there are $q-1$ such elements. By the lemma which is not Burnside's, the number of orbits of $H$ on points of $\ell$ must be
\[
(2(q-1)(2q-3)+(q^2+1)(q-1) )/ (2(q-1)^2)=(q+5)/2.
\]
One of these orbits is $\{P_1,P_2\}$, one is contained in $\Tilde{\Sigma}\cap\ell$, one is contained in $\cH_2\cap\ell$ and, for each $\xi\in\cZ_2$, one is contained $\tS_\xi\cap\ell$.
Since the latter are at least $q-1$ orbits, this implies that each of the sets $\tS_\xi\cap\ell$ is indeed an orbit.

Suppose now $\tS\in\tcS^1$. We will apply the same method, but using a $\Sigma$-line $\ell$ external to $\cQ_0$. The line $\ell$ will therefore be a secant for the quadric $\cQ$, namely $\ell=P_1P_2$, with $P_i\in\cQ\setminus\cQ_0$. Without loss of generality, we can again assume $P_1=e_1\otimes f_1$ and $P_2=e_2\otimes f_2$. With this choice though, the group representation we have to use is not the canonical one. Here $g\in H$ acts on $e_1\otimes f_1$ as $(e_1\otimes f_1)^g=\lambda e_1\otimes \mu f_1$ and on $e_2\otimes f_2$ as $(e_2\otimes f_2)^g=\lambda^q e_2\otimes \mu^q f_2$, or on $e_1\otimes f_1$ as $(e_1\otimes f_1)^g=\lambda e_2\otimes \mu f_2$ and on $e_2\otimes f_2$ as $(e_2\otimes f_2)^g=\lambda^q e_1\otimes \mu^q f_1$. This amounts to a total of $2(q+1)^2$ elements, namely $|H|=2(q+1)^2$. 
Reasoning as before, we obtain that there are as many as
\[
(2(q+1)(2q+1)+(q^2+1)(q+1) )/ (2(q+1)^2)=(q+3)/2
\]
orbits.
\end{proof}

\begin{theorem}\label{pointorbits}
The group $G\leq PGL(4,q^2)$ stabilizing the subquadric $\cQ_0$ acts on the points of $PG(3,q^2)$ with the following orbits:
\begin{itemize}
\item $\cQ_0$, of size $(q+1)^2$;
\item $\Tilde{\Sigma}$, of size $q^3-q$;
\item $\cT$, of size $2(q+1)(q^2-q)$;
\item $\Tilde{\cQ}$, of size $q^4-2q^3+q^2$;
\item $\Tilde{\cD}$ of size $q(q^2-1)^2$;
\item the set $\cH_1$ of nonsingular points of $\cH$, of size $(q+1)(q-1)^2q^2/2$;
\item the set $\cH_2$ of singular points of $\cH$ not in $\cQ_0$, of size $(q-1)q^2(q+1)^2/2$;
\item the $(q-3)/2$ orbits in $\tcS^1$, each of size $(q+1)(q-1)^2q^2$ and given by $\tS_\xi\cup\tS_{-\xi}$, with $\xi\in\cZ_1$;
\item the $(q-1)/2$ orbits in $\tcS^2$, each of size $(q-1)q^2(q+1)^2$ and given by $\tS_\xi\cup\tS_{-\xi}$, with $\xi\in\cZ_2$.
\end{itemize}
\end{theorem}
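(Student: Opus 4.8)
The plan is to prove the statement in four stages: (i) show the nine listed families partition $PG(3,q^2)$; (ii) check each is $G$-invariant; (iii) show $G$ is transitive on each; and (iv) compute the cardinalities and verify they sum to $|PG(3,q^2)|=q^6+q^4+q^2+1$. For the partition I would sort a point $P$ by the pair $(Q(P),H(P))$, noting that $Q(P),H(P)\in\Fq$. If $Q(P)=0$ then $P\in\cQ$, and combining $\cQ\cap\cH=\cQ_0\sqcup\cT$ with $\cQ\cap\Sigma=\cQ_0$ gives $\cQ=\cQ_0\sqcup\cT\sqcup\Tilde{\cQ}$, the points of $\Tilde{\cQ}=\cQ\setminus\cH$ being nonsingular by the analysis of System \eqref{system}. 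If $Q(P)\neq0$ and $H(P)=0$ then $P\in\cH\setminus\cQ=\cH_1\sqcup\cH_2$, split by singularity. If $Q(P)\neq0$ and $H(P)\neq0$, set $\xi_P:=H(P)/\bigl(2Q(P)^{(q+1)/2}\bigr)$; then $\xi_P^2=H(P)^2/(4Q(P)^{q+1})\in\Fq^\times$, so $\xi_P^{2(q-1)}=1$, and since $\{\xi:\xi^{2(q-1)}=1\}=\cZ_1\sqcup\cZ_2\sqcup\{\pm1\}$ by Lemma \ref{lemma:tec}, exactly one of $\xi_P\in\cZ_1$ (then $P\in\tS$ with $\tS\in\tcS^1$), $\xi_P\in\cZ_2$ (then $\tS\in\tcS^2$), or $\xi_P=\pm1$ holds. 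In the last case $P\in S_1\cup S_{-1}=\cD$ off $\cH$, hence $P\in\Tilde{\Sigma}\sqcup\Tilde{\cD}$ via $\cD=\cQ_0\sqcup\cT\sqcup\Tilde{\Sigma}\sqcup\Tilde{\cD}$. This places each point in exactly one family.

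$G$-invariance is immediate, since $G$ stabilises $\cQ$, $\cH$, $\Sigma$, $\cQ_0$, permutes the pencil $\cS$ fixing its base, and preserves singularity. For transitivity, $\cQ_0$ and $\Tilde{\Sigma}$ are handled by Result \ref{group properties}, and $\cH_1$, $\cH_2$ and each single surface $\tS$ by the preceding proposition. Proposition \ref{surfacesstabilized} shows $G$ stabilises each pair $\tS_\xi\cup\tS_{-\xi}$; as an element of $G$ multiplying $Q$ by a non-square sends $S_\xi$ to $S_{-\xi}$, the two $G_0$-orbits $\tS_\xi,\tS_{-\xi}$ fuse into one $G$-orbit, whereas $\cH_1$ and $\cH_2$ (distinguished by singularity) and distinct pairs do not, producing the $(q-3)/2$ orbits in $\tcS^1$ and $(q-1)/2$ in $\tcS^2$.

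The genuinely new work is transitivity on $\cT$, $\Tilde{\cQ}$ and $\Tilde{\cD}$. Each of their points lies off $\Sigma$, hence on a unique $\Sigma$-line (Result \ref{points are on extended sigma line}), which is respectively an extended generator, an external line, and a tangent line of $\cQ_0$ (for $\Tilde{\cQ}$ one uses that its points are nonsingular, hence on an external line). As $G$ is transitive on each such line-type — on generators by the standard transitivity of the orthogonal group on the generators of a hyperbolic quadric, and on external and tangent lines by Result \ref{group properties} — it suffices to show the stabiliser of one line $\ell_0$ is transitive on its relevant points. For an external line the two $\cQ$-points are interchanged by the stabiliser, exactly as in the external-line orbit count of the preceding proposition. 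For a generator written in Segre form as $e_1\otimes(\cdot)$, the subgroup of $\hat{G}=PGL(2,q)\times PGL(2,q)$ fixing $\langle e_1\rangle$ induces the full $PGL(2,q)$ on $\ell_0\cong PG(1,q^2)$, transitive on the $q^2-q$ points off the subline $\ell_0\cap\Sigma$. For a tangent line at $P\in\cQ_0$, the stabiliser fixes $P$ and induces at least the affine group $z\mapsto az+b$ with $a\in\Fq^\times$, $b\in\Fq$, which is transitive on $\Fqt\setminus\Fq$, i.e. on the $q^2-q$ points of $\Tilde{\cD}\cap\ell_0$.

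It remains to compute sizes: $|\cQ_0|=(q+1)^2$ and $|\Tilde{\Sigma}|=|\Sigma|-|\cQ_0|=q^3-q$ are standard; $|\cT|$ and $|\Tilde{\cD}|$ come from the $2(q+1)$ generators and the $(q+1)^2(q-1)$ tangent lines, each carrying $q^2-q$ points off $\cQ_0$ (resp. off $\Sigma$) with no repetition; $|\Tilde{\cQ}|=|\cQ|-|\cQ\cap\cH|=(q^2+1)^2-(q+1)(2q^2-q+1)=q^2(q-1)^2$; $|\cH_1|$ follows from the $q^2(q-1)^2/2$ external $\Sigma$-lines each meeting $\cH_1$ in $q+1$ points (Proposition \ref{sigmalines external to q0}), and $|\cH_2|=|\cH|-|\cQ\cap\cH|-|\cH_1|$; finally the $\tcS^1$ and $\tcS^2$ orbits have twice the single-surface sizes already recorded. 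I expect the main obstacle to be the transitivity step for $\cT$ and $\Tilde{\cD}$ — pinning down the group induced by the line stabiliser on an extended generator and an extended tangent line — together with the careful bookkeeping of the $G$-versus-$G_0$ action, so that the $\pm\xi$ pairs are fused while the singular and nonsingular families $\cH_1/\cH_2$ and $\tcS^1/\tcS^2$ remain separate. As a global consistency check I would confirm that the nine cardinalities sum to $q^6+q^4+q^2+1$.
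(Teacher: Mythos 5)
Your proposal is correct and follows the route the paper intends: the theorem is stated as a summary of the preceding section, and your proof assembles exactly those ingredients (Result \ref{group properties}, Proposition \ref{surfacesstabilized}, the proposition that each $\tS$, $\cH_1$, $\cH_2$ is a $G_0$-orbit, and the fusion of $\tS_\xi$ with $\tS_{-\xi}$ by a non-square-determinant element). You additionally supply the pieces the paper leaves implicit — the partition of $PG(3,q^2)$ via $(Q(P),H(P))$ and $\xi_P$, and the line-stabiliser transitivity arguments for $\cT$, $\Tilde{\cQ}$ and $\Tilde{\cD}$ — and these, together with the size bookkeeping, all check out (your reading $\cZ_1\sqcup\cZ_2\sqcup\{\pm1\}$ is the consistent interpretation of Lemma \ref{lemma:tec}).
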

\begin{remark}
By Proposition \ref{surfacesstabilized}, the group $G_0$ splits the orbit given by pairs of sets in $\tcS$ in the two sets themselves. Also, the set $\Tilde{\Sigma}$ splits in two orbits.
\end{remark}

\subsection{Point orbit distribution of planes}
\label{section plane intersection numbers}
In this section we determine how planes of $\PG(3,q^2)$ intersect certain point sets related to the $G$- and $G_0$-orbits. This will allow us to determine new quasi-Hermitian varieties as joins of some of these sets. We proceed by considering the planes $P^\perp$ for points $P$ representing each orbit.

\begin{proposition}\label{Q0tgplane pts distribution}
Let $P\in\cQ_0$. The points of $P^\perp$ are distributed as in following table.
\begin{comment}
\begin{enumerate}
\item $|P^\perp\cap \cQ_0|=2q+1$;
\item $|P^\perp \cap\Tilde{\Sigma}|=q^2-q$;
\item $|P^\perp\cap \cT|=2(q^2-q)$;
\item $|P^\perp \cap\Tilde{\cQ}|=0$;
\item $|P^\perp \cap\Tilde{\Delta}|=(q-1)(q^2-q)$;
\item $|P^\perp\cap \cH_1|=0$;
\item $|P^\perp\cap \cH_2|=q^3-q^2$;
\item $|P^\perp\cap \cS^1_{t}|=0$;
\item $|P^\perp\cap \cS^2_{t}|=(q-1)q^2$.
\end{enumerate}    
\end{comment}
\\
%\resizebox{\columnwidth}{}{%
\begin{tabular}{|c||c|c|c|c|c|c|c|c|c|}
\hline
   $\cO$: & $\cQ_0$ & $\Tilde{\Sigma}$ & $\cT$ & $\Tilde{\cQ}$ & $\Tilde{\cD}$ & $\cH_1$ & $\cH_2$ & $\tS\in\tcS^1 $ & $\tS\in\tcS^2$  \\ 
   \hline
$|P^\perp \cap \cO|$: & $2q+1$ & $q^2-q$ & $2(q^2-q)$ & $0$ & $(q-1)(q^2-q)$ & $0$ & $q^3-q^2$ & $0 $ & $(q-1)q^2$  \\ 
\hline
\end{tabular}
\end{proposition}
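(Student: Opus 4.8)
The plan is to use the transitivity of $G$ on $\cQ_0$ (Result \ref{group properties}) to fix a single point $P\in\cQ_0$ and study the tangent plane $\pi=P^\perp$. Since the polarities commute and the tangent planes of $\cQ$ and $\cH$ agree at a common point, $\pi$ is a $\cQ_0$-tangent plane, hence a $\Sigma$-plane, so $\pi_0:=\pi\cap\Sigma$ is a subplane $\PG(2,q)$ equal to the polar plane of $P$ with respect to $\cQ_0$ inside $\Sigma$. Thus $\pi_0\cap\cQ_0$ is the degenerate conic formed by the two generators $g_1,g_2$ of $\cQ_0$ through $P$, giving $2q+1$ points and settling the $\cQ_0$-column; and since $\pi\cap\Sigma=\pi_0$, the points of $\pi$ lying in $\Sigma\setminus\cQ_0$ are exactly $\pi_0\setminus\cQ_0$, numbering $(q^2+q+1)-(2q+1)=q^2-q$, which settles the $\Tilde{\Sigma}$-column. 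I would then classify the lines of $\pi_0$ by position relative to $\cQ_0$: the two generators $g_1,g_2$; the $q-1$ other lines through $P$, each tangent to $\cQ_0$; and the $q^2$ lines off $P$, each a secant (it meets $g_1$ and $g_2$ in two distinct points, distinct because a common point would be $P$). In particular $\pi_0$ contains \emph{no} external line.

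The next step is to reduce the count off $\Sigma$ to a line-by-line computation. By Result \ref{points are on extended sigma line} every point of $\pi\setminus\pi_0$ lies on a unique $\Sigma$-line; the Baer-subplane geometry of $\pi_0$ inside $\pi$ shows this $\Sigma$-line is contained in $\pi$ and is the extension of a unique line of $\pi_0$. Hence the $q^4-q$ points of $\pi\setminus\pi_0$ partition into $q^2+q+1$ classes of $q^2-q$ non-real points, one per line of $\pi_0$, and it suffices to identify the orbit of the non-real points on the extension of each line type. Here the earlier results enter directly: the non-real points of an extended generator lie in $\cT$ (Proposition \ref{intersection quadric and hermitian } together with $\cQ\cap\cH=\cQ_0\sqcup\cT$); the non-real points of an extended $\cQ_0$-tangent line lie in $\Tilde{\cD}$ by the very definition of $\Tilde{\cD}$; and on an extended secant line the $q^2-q$ non-real points split as $q-1$ in $\cH_2$ and $q-1$ in each $\tS\in\tcS^2$, as established for the line $P^\perp\cap R^\perp$ in the proof of Proposition \ref{hermitian cone} and carried to every secant $\Sigma$-line by the transitivity of $G_0$ on secant sublines (Result \ref{group properties}) and the $G_0$-invariance of each member of $\tcS$ (Proposition \ref{surfacesstabilized}).

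Assembling these with the right multiplicities yields the remaining entries: the two generators contribute $2(q^2-q)$ points to $\cT$; the $q-1$ tangent lines contribute $(q-1)(q^2-q)$ points to $\Tilde{\cD}$; and the $q^2$ secant lines contribute $q^2(q-1)=q^3-q^2$ points to $\cH_2$ and $q^2(q-1)=(q-1)q^2$ points to each $\tS\in\tcS^2$. The three remaining columns vanish: the orbits $\Tilde{\cQ}$, $\cH_1$ and the surfaces of $\tcS^1$ consist of nonsingular points (for $\tcS^1$ this is the earlier lemma; for $\Tilde{\cQ}=\cQ\setminus\cH$ and $\cH_1$ it is the analysis of System \eqref{system}), and nonsingular points lie only on external extended sublines of $\Sigma$; since $\pi_0$ has no external line and these orbits are disjoint from $\Sigma$, the plane $\pi$ meets none of them. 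As a final check I would verify $2q+1+(q^2-q)+2(q^2-q)+(q-1)(q^2-q)+(q^3-q^2)+(q-1)\cdot(q-1)q^2=q^4+q^2+1$, the number of points of a plane of $\PG(3,q^2)$.

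The main obstacle is not the arithmetic but the two structural inputs making the line-by-line reduction legitimate: first, that every point of $\pi$ off $\Sigma$ lies on a $\Sigma$-line \emph{contained} in $\pi$, so that the classification of the lines of $\pi_0$ is exhaustive and the induced classes are genuinely disjoint — this rests on the Baer-subplane structure of $\pi_0\subset\pi$ and the uniqueness in Result \ref{points are on extended sigma line}; and second, the transfer of the secant-line intersection pattern from the single line $P^\perp\cap R^\perp$ to every secant line of $\pi_0$, for which I rely on Result \ref{group properties} and Proposition \ref{surfacesstabilized}. Once these are secured, every entry of the table follows by counting points along the $q^2+q+1$ extended lines.
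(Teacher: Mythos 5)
Your proof is correct, and for most columns it runs parallel to the paper's: both arguments identify $P^\perp\cap\Sigma$ as the tangent subplane meeting $\cQ_0$ in the two generators through $P$ (giving the $\cQ_0$, $\Tilde{\Sigma}$, $\cT$ and $\Tilde{\cD}$ entries from the generators and the $q-1$ tangent sublines), and both kill the $\Tilde{\cQ}$, $\cH_1$ and $\tcS^1$ columns by observing that nonsingular points avoid $\cQ_0$-tangent planes. Where you genuinely diverge is in the $\cH_2$ and $\tcS^2$ columns: the paper counts along the pencil of lines \emph{through} $P$, using that $P^\perp\cap\cH$ is a Hermitian cone of $q+1$ lines (two of them the generators, the other $q-1$ lying in $\cH_2\cup\{P\}$) together with Proposition \ref{hermitian cone} for the lines contained in each $S\in\cS^2$; you instead count along the $q^2$ extended secant sublines of the subplane \emph{not} through $P$, importing the intersection pattern of a single secant from the proof of Proposition \ref{hermitian cone} via the transitivity of $G_0$ on secant sublines and the $G_0$-invariance of each $\tS$. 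Your route has two advantages: it makes explicit the structural fact the paper leaves tacit — that every point of $P^\perp\setminus\Sigma$ lies on an extended line of the subplane that is itself contained in $P^\perp$, so the line-by-line classes are exhaustive and disjoint — and it comes with a built-in consistency check (the column sums give $q^4+q^2+1$). The paper's route is shorter because the cone structure of $P^\perp\cap\cH$ hands over the $\cH_2$ count by pure subtraction. Both are valid; no gap.
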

\begin{proof}
Since $P\in\cQ_0\cap\cH$, $P^\perp$ is a tangent plane for both $\cQ_0$ and $\cH$, and the two surfaces share two generators (meeting $\cT$ in $q^2-q$ points each). The remaining $q-1$ $\Sigma$-lines are tangent to $\cQ_0$. Therefore they have $q$ points on $\Tilde{\Sigma}$ and $q^2-q$ on $\Tilde{\cD}$. 
%This explains $1., 2., 3.,5.$ and $ 7.$. 
Nonsingular points are those not contained in any $\cQ_0$-tangent plane. %This explains $4., 6., 8.$.
Since $P$ belongs to $\cH$, $P^\perp\cap\cH$ is a degenerate Hermitian variety. Since there are no points in $P^\perp\cap\cH_1$, there must be $(q^3+q^2+1)-(2q+1)-2(q^2-q)$ in $P^\perp\cap\cH_2$. Finally Proposition \ref{hermitian cone} explains the case $\cO\in\tcS^2$.
%implies $9.$. \textcolor{red}{THIS PROOF IS DONE}
\end{proof}

\begin{proposition}\label{sigmaplane pts distribution}
Let $P\in\Tilde{\Sigma}$. The points of $P^\perp$ are distributed as in following table.
\\
\begin{comment}
\begin{enumerate}
\item $|P^\perp\cap \cQ_0|=q+1$;
\item $|P^\perp \cap\Tilde{\Sigma}|=q^2$;
\item $|P^\perp\cap \cT|=0$;
\item $|P^\perp \cap\Tilde{\cQ}|=q^2-q$;
\item $|P^\perp \cap\Tilde{\Tilde{\cD}}|=(q^2-q)(q+1)$;
\item $|P^\perp\cap \cH_1|=(q^2-q)(q+1)$;
\item $|P^\perp\cap \cH_2|=(q^2-q)(q+1)$;
\item $|P^\perp\cap \cS^1_{t}|=(q^2-q)(q+1)$;
\item $|P^\perp\cap \cS^2_{t}|=(q^2-q)(q+1)$.
\end{enumerate}
\end{comment}
%\resizebox{\columnwidth}{}{%
\begin{tabular}{|c||c|c|c|c|c|c|c|c|c|}
\hline
   $\cO$: & $\cQ_0$ & $\Tilde{\Sigma}$ & $\cT$ & $\Tilde{\cQ}$ & $\Tilde{\cD}$ & $\cH_1$  \\ 
   \hline
$|P^\perp \cap \cO|$: & $q+1$ & $q^2$ & $0$ & $q^2-q$ & $(q^2-q)(q+1)$ & $(q^2-q)(q+1)/2$   \\ 
\hline
\end{tabular}

\begin{tabular}{|c||c|c|c|c|c|c|c|c|c|}
\hline
   $\cO$: &  $\cH_2$ & $\tS\in\tcS^1 $ & $\tS\in\tcS^2$  \\ 
   \hline
$|P^\perp \cap \cO|$: & $(q^2-q)(q+1)/2$ & $(q^2-q)(q+1)/2 $ & $(q^2-q)(q+1)/2$  \\ 
\hline
\end{tabular}
\end{proposition}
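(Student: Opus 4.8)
The plan is to analyse the plane $\pi := P^\perp$ through its trace on the subgeometry. First I would record that, because $\perp=\perp_\cH$ commutes with $\perp_\cQ$ and $\Sigma=\mathrm{Fix}(\rho)$ with $\rho=\perp_\cH\perp_\cQ$, the identity $\rho\,\perp_\cH=\perp_\cH\,\rho^{-1}$ gives $\rho(\pi)=\rho\,\perp_\cH(P)=\perp_\cH\rho^{-1}(P)=\perp_\cH(P)=\pi$, so $\pi$ is a $\Sigma$-plane; moreover $\pi_0:=\pi\cap\Sigma$ is the polar plane of $P$ with respect to $\cQ_0=\cH\cap\Sigma$ inside $\Sigma\cong\PG(3,q)$, since $\perp_\cH$ restricts to the orthogonal polarity of $\cQ_0$ on $\Sigma$. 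As $P\in\Tilde{\Sigma}$ lies off $\cQ_0$, the plane $\pi_0$ is non-tangent and meets the hyperbolic quadric $\cQ_0$ in a nondegenerate conic $\cC$ with $q+1$ points. This already yields $|\pi\cap\cQ_0|=q+1$ and $|\pi\cap\Tilde{\Sigma}|=|\pi_0|-|\cC|=q^2$. I would then dispose of $\cT$: a point of $\cT\cap\pi$ off $\Sigma$ would lie on an extended generator of $\cQ_0$ whose $\Sigma$-part is a full generator, but that generator cannot lie in $\pi_0$ (the conic $\cC$ contains no line), forcing the extended generator to meet $\pi$ inside $\Sigma$, a contradiction; hence $|\pi\cap\cT|=0$.

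The remaining seven orbits are disjoint from $\Sigma$, and here I would reduce everything to a line count using the blocking property of the Baer subplane $\pi_0$ of $\pi\cong\PG(2,q^2)$: every line of $\pi$ meets $\pi_0$ in $1$ or $q+1$ points, so each of the $q^4-q$ points of $\pi\setminus\pi_0$ lies on a unique line of $\pi$ meeting $\pi_0$ in a subline, namely the extension of a unique line $\ell_0$ of $\pi_0$. By Result \ref{points are on extended sigma line} this extension $\ell$ is exactly the $\Sigma$-line through the point, and it is external, tangent, or secant to $\cQ_0$ precisely as $\ell_0$ is external, tangent, or secant to $\cC$. There are $q(q-1)/2$ external, $q+1$ tangent, and $q(q+1)/2$ secant such lines, each carrying $q^2-q$ points outside $\pi_0$.

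Now I would tally the three families. For the $q+1$ tangent extensions, all their non-$\Sigma$ points lie in $\Tilde{\cD}$ by the very definition of $\Tilde{\cD}$, giving $|\pi\cap\Tilde{\cD}|=(q+1)(q^2-q)$. For the $q(q-1)/2$ external extensions, the characterisation of nonsingular points as those lying on an extended subline external to $\cQ_0$ makes every non-$\Sigma$ point nonsingular, so it falls in $\Tilde{\cQ}\cup\cH_1\cup\bigcup_{\tcS^1}$; each such $\ell$ is a secant of $\cQ$ contributing $2$ points to $\Tilde{\cQ}$, meets $\cH$ in $q+1$ points all lying in $\cH_1$, and by Proposition \ref{sigmalines external to q0} meets each $\tS\in\tcS^1$ in $q+1$ points, so summing gives $|\pi\cap\Tilde{\cQ}|=q^2-q$, $|\pi\cap\cH_1|=(q+1)q(q-1)/2$, and $|\pi\cap\tS|=(q+1)q(q-1)/2$ for $\tS\in\tcS^1$. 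For the $q(q+1)/2$ secant extensions, every non-$\Sigma$ point is singular (its $\Sigma$-line is secant, not external), hence lies in $\cH_2\cup\bigcup_{\tcS^2}$; each $\ell$ meets $\cH$ in $q+1$ points, two on $\cQ_0$ and $q-1$ in $\cH_2$, while the remaining $(q-1)^2$ points split evenly among the surfaces of $\tcS^2$, $q-1$ on each by Proposition \ref{secant lines meet S2} together with the transitivity of Result \ref{group properties}, so $|\pi\cap\cH_2|=(q-1)q(q+1)/2$ and $|\pi\cap\tS|=(q-1)q(q+1)/2$ for $\tS\in\tcS^2$. Rewriting each total in the form $(q^2-q)(q+1)/2$ matches the table.

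The main obstacle is twofold. The conceptual part is the first step: showing cleanly that $\pi=P^\perp$ is a $\Sigma$-plane and that its trace is the $\cQ_0$-polar of $P$, which rests on the commutativity of the two polarities and on the restriction of $\perp_\cH$ to $\Sigma$; once this is in place, everything is organised around the blocking-set dichotomy for $\pi_0$. The delicate bookkeeping part is the last family: one must argue that the $(q-1)^2$ singular non-$\Sigma$ points on a secant line are distributed equally, $q-1$ per surface, across the $q-1$ members of $\tcS^2$, which I would extract from the orbit analysis on a secant line used in Proposition \ref{secant lines meet S2}. As a final safeguard I would check that the nine intersection numbers sum to $q^4+q^2+1=|\pi|$, confirming that no orbit has been over- or under-counted.
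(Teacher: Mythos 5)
Your proof is correct, and for the nonsingular orbits it follows the paper's route exactly: trace $P^\perp$ on $\Sigma$ as a conic plane, count the $q(q-1)/2$ external lines, and invoke Proposition \ref{sigmalines external to q0} to settle $\Tilde{\cQ}$, $\cH_1$ and $\tcS^1$. Where you diverge is in the singular orbits: the paper obtains $\cH_2$, $\tcS^2$ and $\Tilde{\cD}$ by intersecting $P^\perp$ with the $\cQ_0$-tangent planes at the $q+1$ points of the conic (a count it states rather telegraphically, and whose stated intermediate value $(q-1)(q+1)q$ does not match the table without an unexplained correction), whereas you run the single uniform argument that every point of $P^\perp\setminus\Sigma$ lies on a unique extended $\Sigma$-line contained in $P^\perp$, and then tally the external/tangent/secant lines of the conic. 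Your organisation is cleaner, makes the disjointness of the nine contributions transparent, and your final sum check $q^4+q^2+1$ is a genuine safeguard the paper omits. Two small remarks. First, your preliminary verification that $P^\perp$ is a $\Sigma$-plane (via $\rho\,\perp_\cH=\perp_\cH\,\rho^{-1}$ and the Baer-involution argument) fills a step the paper simply asserts; it is welcome. Second, be careful with your citation of Proposition \ref{secant lines meet S2}: as stated it gives $q+1$ points of $\tS$ on a secant line, which counts the two points of $\cQ_0$ (i.e.\ it refers to the full surface $S_\xi$ rather than $\tS=S_\xi\setminus\cH$); the figure you actually need and use, $q-1$ points of $\tS$ per secant line, is the one that follows from the proof of Proposition \ref{hermitian cone} and is consistent with the table, so your count is right but the reference should be to that lemma (or the discrepancy in the proposition's statement should be noted).
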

\begin{proof}
Since $P^\perp$ is a $\Sigma$-plane and $P\not\in\cQ_0$, it meets $\cQ_0$ in a conic. % This explain $1.$ and $2.$. 
No $\cQ_0$-generator lies in $P^\perp$, and therefore no point of $\cT$.
%This implies $3.$.
Since there are $(q^2-q)/2$ lines in $PG(2,q)$ external to an irreducible conic, $P^\perp$ contains $(q^2-q)/2$ $\Sigma$-lines external to $\cQ_0$. This, togheter with Proposition \ref{sigmalines external to q0}, explain the cases $\cO\in\{\cH_1,\Tilde{\cQ}\}\cup \tcS^1$. %This implies $6.$ and $8.$.
Every $\cQ_0$-tangent at a point on $P^\perp$ plane meets $P^\perp$ in precisely $q-1$ points of each $\Tilde{S}\in\tcS^2$, giving $(q-1)(q+1)q$, the same holds for $\cH_2$ and $\Tilde{\cD}$, explaining the remaining cases.
%$5.$, $7.$ and $9.$.  \textcolor{red}{THIS PROOF IS DONE}
\end{proof}

The case $P\in\tS$, with $\tS\in\tcS^1$, it's a bit different from all the other. In particular, studying the size of the intersection of $P^\perp$ with $\hat{S}$, for $\hat{S}\in\tcS^2$ can not be dealt with the same kind of arguments used so far, and it is a bit longer. Since it will be useful to simplify counts for the other point distributions, we state here the full result, but, to avoid interruptions in the flow of the proofs, we will leave the proof requiring a different machinery to the Appendix.

\begin{proposition}\label{S1 partial pts distribution}
Let $P\in\tS$, with $\tS \in\tcS^1$. The points of $P^\perp$ are distributed as in following table.
\\
\begin{comment}
\begin{enumerate}
\item $|P^\perp\cap \cQ_0|=0$;
\item $|P^\perp \cap\Tilde{\Sigma}|=q+1$;
\item $|P^\perp\cap \cT|=2(q+1)$;
\item $|P^\perp \cap\Tilde{\cQ}|=q^2-2q-1$;
\item $|P^\perp \cap\Tilde{\Delta}|=(q-2)(q+1)^2$;
\item $|P^\perp\cap \cH_1|=(q^3-q^2-3q-1)/2$;
\item $|P^\perp\cap \cH_2|=(q-1)(q+1)^2/2$;
\item $|P^\perp\cap \cS^2_{t}|=(q-1)(q+1)^2/2$.
\end{enumerate}
\end{comment}
%\resizebox{\columnwidth}{}{%
\begin{tabular}{|c||c|c|c|c|c|c|c|}
\hline
   $\cO$: & $\cQ_0$ & $\Tilde{\Sigma}$ & $\cT$ & $\Tilde{\cQ}$ & $\Tilde{\cD}$ & $\cH_1$ \\ 
   \hline
$|P^\perp \cap \cO|$: & $0$ & $q+1$ & $2(q+1)$ & $q^2-2q-1$ & $(q-2)(q+1)^2$ & $(q+1)(q^2-2q-1)/2$   \\ 
\hline
\end{tabular}

\begin{tabular}{|c||c|c|c|}
\hline
   $\cO$: &$\cH_2$ & $\tS\in\tcS^1 $ & $\tS\in\tcS^2 $ \\ 
   \hline
 $|P^\perp \cap \cO|$: &$(q+1)(q^2-2q-1)/2+q^2 $ &$(q+1)(q^2-2q-1)/2$& $(q-1)(q+1)^2/2$\\ 
\hline
\end{tabular}
\end{proposition}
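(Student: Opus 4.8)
The plan is to fix one representative point and then read off each intersection number from the way the plane $P^\perp$ meets the subgeometry $\Sigma$, the quadric $\cQ$, and the Hermitian surface $\cH$. By the transitivity of $G_0$ on $\tS$ (equivalently of $G$ on $\tS_\xi\cup\tS_{-\xi}$) we may fix a single $P\in\tS$ with $\tS\in\tcS^1$; such a $P$ is nonsingular, lies off $\Sigma$ and off $\cH$, has $Q(P)\ne 0$, and by Lemma \ref{number of tg planes per point} lies on no $\cQ_0$-tangent plane. Two structural facts drive everything. First, $P^\perp\cap\cQ_0=\emptyset$: if $R\in\cQ_0\cap P^\perp$ then $P\in R^\perp$, and since the two polarities commute, at a point $R\in\cQ\cap\cH$ the plane $R^\perp$ coincides with the (extended) $\cQ_0$-tangent plane at $R$, contradicting that $P$ is on no such plane. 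Second, since $\Sigma$ is a Baer subgeometry and $P\notin\Sigma$, the plane $P^\perp$ is not a $\Sigma$-plane: using $\rho=\perp_\cH\perp_\cQ$ one checks $P^\perp=\rho(P^{\perp_\cQ})$ and $\rho^2=\mathrm{id}$, so $P^\perp$ is $\rho$-fixed if and only if $P^{\perp_\cQ}$ is, i.e. if and only if $P\in\Sigma$. Hence $P^\perp$ meets $\Sigma$ in a single subline $\ell_0$, external to $\cQ_0$ by the first fact. This already gives the entries $0$ for $\cQ_0$ and $q+1$ for $\tilde{\Sigma}$.

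Next I would compute the plane sections. No extended generator of $\cQ_0$ can lie in $P^\perp$, since each such line has $q+1$ points on $\cQ_0$ while $P^\perp\cap\cQ_0=\emptyset$; hence each of the $2(q+1)$ extended generators meets $P^\perp$ in exactly one point, these points are distinct (two generators meet only in $\cQ_0$) and lie in $\cT$, giving $|P^\perp\cap\cT|=2(q+1)$. Because $\rho$ preserves $\cQ$ (one has $Q(\rho(P))=Q(P)^q$) and $P\notin\cQ$, the plane $P^\perp=\rho(P^{\perp_\cQ})$ is not tangent to $\cQ$, so $P^\perp\cap\cQ$ is a nondegenerate conic of $q^2+1$ points and $|P^\perp\cap\tilde{\cQ}|=(q^2+1)-2(q+1)=q^2-2q-1$. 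Likewise $P\notin\cH$ forces $P^\perp\cap\cH$ to be a nondegenerate Hermitian curve of $q^3+1$ points, whence $|P^\perp\cap\cH_1|+|P^\perp\cap\cH_2|=q^3-2q-1$. Finally, $\tilde{\cD}$ is counted by tangent lines: by Proposition \ref{delta properties} every extended $\cQ_0$-tangent line lies in $\cD$ and its $q^2-q$ points off $\Sigma$ all lie in $\tilde{\cD}$, so such a line meets $P^\perp$ in one point lying in $\tilde{\cD}$ unless that point is in $\ell_0$. Subtracting the $(q+1)^2$ tangent lines through the $q+1$ points of $\ell_0$ from the $(q^2-1)(q+1)$ tangent lines gives $|P^\perp\cap\tilde{\cD}|=(q-2)(q+1)^2$.

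It then remains to split the singular and nonsingular families, and I claim everything reduces to a single number. By the pencil structure of Proposition \ref{hermitian cone}, the surface $\cH$ behaves as one further member of the family $\cS^2$ (all carried by $\cQ_0$-secants in the same pattern), so I expect $|P^\perp\cap\cH_2|=|P^\perp\cap\tS'|$ for every $\tS'\in\tcS^2$; call this common value $\nu$. Then $|P^\perp\cap\cH_1|=(q^3-2q-1)-\nu$ from the Hermitian-curve total, the $q-1$ surfaces of $\tcS^2$ contribute $(q-1)\nu$, and the remaining nonsingular entries (the $\tcS^1$-surfaces) are pinned down by the external-$\Sigma$-line incidences of Proposition \ref{sigmalines external to q0} together with the grand total $q^4+q^2+1$, which also serves as a consistency check.

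The main obstacle is precisely the evaluation of $\nu$. Every synthetic argument above exploits a $\Sigma$-line of prescribed type lying in, or meeting, $P^\perp$; but the only $\Sigma$-line inside $P^\perp$ is the external subline $\ell_0$, whereas the $\cH_2$- and $\tcS^2$-points are the singular points, carried by $\cQ_0$-secant lines (Proposition \ref{secant lines meet S2}). A secant line meets $P^\perp$ in a single point, only $q-1$ of whose $q^2-1$ off-$\Sigma$ points belong to a given singular surface, so the tangent-line trick that worked for $\tilde{\cD}$ fails, and no pencil through a point of $P^\perp\cap\Sigma$ is available. This is why the count $|P^\perp\cap\tS'|$ for $\tS'\in\tcS^2$ cannot be obtained by the arguments used so far and must be deferred to the separate machinery developed later; once $\nu=(q-1)(q+1)^2/2$ is established there, all remaining entries of the table follow from the relations above.
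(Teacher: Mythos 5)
Your handling of the first six entries is sound and close in spirit to the paper's (your direct arguments for $\cT$ via the $2(q+1)$ extended generators and for $\Tilde{\cD}$ via the $(q+1)^2(q-1)$ extended tangent lines are if anything cleaner than the double counts used there), but the proof breaks down exactly where the real content of the proposition lies. First, you declare the common value $\nu=|P^\perp\cap\tS'|=|P^\perp\cap\cH_2|$ for $\tS'\in\tcS^2$ to be out of reach of synthetic arguments and defer it to ``separate machinery''. It is not out of reach: you rejected the count based on $\cQ_0$-\emph{secant lines}, but the correct device is the $\cQ_0$-\emph{tangent planes}, which is what the paper means by ``as usual looking at the intersections with the $\cQ_0$-tangent planes''. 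By Lemma \ref{number of tg planes per point} each point of $\cH_2$ and of $\tS'\in\tcS^2$ lies on exactly two $\cQ_0$-tangent planes; by Proposition \ref{hermitian cone} a tangent plane $R^\perp$ meets $\tS'$ (resp.\ $\cH_2$) in $q-1$ lines through $R$, and since $R\in\cQ_0$ is not on $P^\perp$, the line $R^\perp\cap P^\perp$ meets these $q-1$ concurrent lines in $q-1$ distinct points; summing over the $(q+1)^2$ tangent planes and dividing by $2$ gives $\nu=(q+1)^2(q-1)/2$ with no extra machinery. (Your equality $|P^\perp\cap\cH_2|=|P^\perp\cap\tS'|$ is correct and consistent with $|P^\perp\cap\cH|=q^3+1$; note the printed $\cH_2$ entry of the table is internally inconsistent with that Hermitian-curve count and the correct value is $(q-1)(q+1)^2/2$.)

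Second, and more seriously, the entries you claim are ``pinned down by the external-$\Sigma$-line incidences together with the grand total'', namely $|P^\perp\cap\tS''|$ for $\tS''\in\tcS^1$, are precisely the ones that cannot be obtained this way, and this is the part the paper genuinely has to defer to the Appendix. Proposition \ref{sigmalines external to q0} only tells you that the unique subline $\ell_0\subset P^\perp$ contributes $q+1$ points to each $\tS''$; the remaining points of $P^\perp\cap\tS''$ lie on external $\Sigma$-lines not contained in $P^\perp$, so no incidence count is available, and the grand total furnishes a single linear relation among the $q-3$ unknowns. A symmetry argument cannot close this gap either, because the surface $\tS$ containing $P$ is distinguished: its intersection with $P^\perp$ exceeds that of the other members of $\tcS^1$ by $q^2$ (compare the two values $(q^3+q^2-3q-1)/2$ and $(q^3-q^2-3q-1)/2$). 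The paper resolves this by counting, via inclusion--exclusion on a pencil of Hermitian forms (Theorem 3.6 of \cite{sheekeyThesis}), the elements of $G_0$ mapping $P$ into $P^\perp$, which yields $|P^\perp\cap\tS|$ for the orbit $\tS$ of $P$ and then the remaining $\tcS^1$ entries. Your proposal contains no substitute for this computation, so the proof is incomplete at its essential step.
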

\begin{proof}
Since, from Lemma \ref{number of tg planes per point}, points on $\tS$ are on no $\cQ_0$-tangent plane, by duality $P^\perp$ has no point in $\cQ_0$. Since $P^\perp$ must intersect $\Sigma$ in a line, that must be a $\Sigma$-line external to $\cQ_0$, explaining the case $\cO=\Tilde{\Sigma}$.
On the other hand, each of the $q+1$ points of $P^\perp\cap\Tilde{\Sigma}$ lies on precisely $q+1$ $\cQ_0$-tangent planes. Since there are a total of $(q+1)^2$ of such planes, it follows that each of them contains precisely one point of $\Tilde{\Sigma}$. As a consequence, each such plane meets $P^\perp$ in precisely one point of $\Tilde{\Sigma}$. This implies that it meets $\Tilde{\cD}$ in $q-2$ points, explaining the case $\cO=\Tilde{\cD}$.
The cases $\cO\in\{\cH_2,\cT\}\cup \tcS^2$ are obtained by looking at the intersections with the $\cQ_0$-tangent planes, and using Proposition \ref{hermitian cone} and Lemma \ref{number of tg planes per point}. The sizes of $P^\perp\cap \Tilde{\cQ}$ and $P^\perp\cap\cH_1$ are consequences of the intersection properties of the quadrics and Hermitian surfaces.
The cases $\cO\in\tcS^1$ are more involved; the remainder of the proof can be found in the Appendix.%Subsection \ref{subsection end of proof}
\end{proof}

\begin{proposition}\label{T pts distribution}
Let $P\in\cT$. The points of $P^\perp$ are distributed as in following table.
\\
\begin{comment}
\begin{enumerate}
\item $|P^\perp\cap \cQ_0|=q+1$;
\item $|P^\perp \cap\Tilde{\Sigma}|=0$;
\item $|P^\perp\cap \cT|=q^2$;
\item $|P^\perp \cap\Tilde{\cQ}|=q^2-q$;
\item $|P^\perp \cap\Tilde{\Tilde{\cD}}|=(q^2-q)(q+1)/2$;
\item $|P^\perp\cap \cH_1|=(q^2-q)(q+1)/2$;
\item $|P^\perp\cap \cH_2|=(q^2-q)(q+1)/2$;
\item $|P^\perp\cap \cS^1_{t}|=(q^2-q)(q+1)/2$;
\item $|P^\perp\cap \cS^2_{t}|=(q^2-q)(q+1)/2$.
\end{enumerate}
\end{comment}
%\resizebox{\columnwidth}{}{%
\begin{tabular}{|c||c|c|c|c|c|c|}
\hline
   $\cO$: & $\cQ_0$ & $\Tilde{\Sigma}$ & $\cT$ & $\Tilde{\cQ}$ & $\Tilde{\cD}$ & $\cH_1$  \\ 
   \hline
$|P^\perp \cap \cO|$: & $q+1$ & $0$ & $q^2$ & $q^2-q$ & $(q^2-q)(q+1)$ & $(q^2-q)(q+1)/2$   \\ 
\hline
\end{tabular}\\

\begin{tabular}{|c||c|c|c|}
\hline
   $\cO$: & $\cH_2$ & $\tS\in\tcS^1 $ & $\tS\in\tcS^2$  \\ 
   \hline
$|P^\perp \cap \cO|$: & $(q^2-q)(q+1)/2$ & $(q^2-q)(q+1)/2 $ & $(q^2-q)(q+1)/2$  \\ 
\hline
\end{tabular}
\end{proposition}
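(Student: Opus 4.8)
The plan is to mimic the proof of Proposition~\ref{Q0tgplane pts distribution} and especially Proposition~\ref{sigmaplane pts distribution}, exploiting the fact that $P\in\cT$ lies on a unique extended $\cQ_0$-generator. First I would fix a convenient representative, as in Lemma~\ref{pencil at T-tangent}, taking $P=(1,-1,0,0)$ so that $P^\perp$ has equation $\gamma+\delta=0$; by the transitivity of $G$ on $\cT$ (Theorem~\ref{pointorbits}), this suffices. The key structural input is Lemma~\ref{pencil at T-tangent}, which gives the pencil $\cL$ of the $q^2+1$ lines of $P^\perp$ through $P$: one extended $\cQ_0$-generator, one $\cQ_0$-generator, and two families of $(q^2-1)/2$ tangent lines each, split according to whether they carry non-singular ($\cH_1$, $\tcS^1$) or singular ($\cH_2$, $\tcS^2$) points.

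Next I would count orbit-by-orbit by summing contributions along the lines of $\cL$. The two generators account for $\cQ_0$ and $\cT$: every point of $P^\perp$ distinct from $P$ lying on a $\cQ_0$-generator through $P$ is in $\cQ_0\sqcup\cT$, and since $P$ itself is in $\cT$ and every other line of $\cL$ meets $\Sigma$ only in $\cQ_0$-points, I expect $|P^\perp\cap\cQ_0|=q+1$ and $|P^\perp\cap\cT|=q^2$ (the $q$ further points of the genuine extended generator, plus $P$ and its collinear partners). Because $P^\perp$ is a $\cQ_0$-tangent plane meeting $\Sigma$ in two lines (the two generators), there are no points of $\Tilde{\Sigma}$, explaining the zero entry. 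For the remaining sets I would use Lemma~\ref{pencil at T-tangent}: each of the $(q^2-1)/2$ non-singular tangent lines contributes $q$ points to each of $\cH_1$, $\Tilde{\cD}$, and $\tS$ with $\tS\in\tcS^1$, giving $(q^2-q)(q+1)/2$ in each case once one multiplies $q\cdot(q^2-1)/2$; symmetrically the $(q^2-1)/2$ singular tangent lines give $(q^2-q)(q+1)/2$ points in each of $\cH_2$ and $\tS\in\tcS^2$.

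The entry $|P^\perp\cap\Tilde{\cD}|=(q^2-q)(q+1)$ and $|P^\perp\cap\Tilde{\cQ}|=q^2-q$ require slightly more care, since $\Tilde{\cD}$ receives points from \emph{both} families of tangent lines (its points are singular but lie on tangent rather than secant lines), so I would recount: each $\cQ_0$-tangent line through $P$ contributes $q$ points to $\Tilde{\cD}$ regardless of family, and there are $q^2-1$ such lines, yielding $q(q^2-1)=(q^2-q)(q+1)$. For $\Tilde{\cQ}$ I would argue that these points lie only on the non-singular tangent lines (as $\Tilde{\cQ}$-points are singular but off $\cH$), cross-checking the total against the full point count $q^4+q^2+1$ of the plane. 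The main obstacle is bookkeeping: ensuring that each of the $q^2+1$ lines through $P$ distributes its $q^2+1$ points consistently across the nine orbits so that all column sums agree with Theorem~\ref{pointorbits}. I would finish with a verification that the nine entries sum to $q^4+q^2+1$, which serves as a self-consistency check that the line-by-line accounting is complete and avoids double-counting.
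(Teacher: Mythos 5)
Your overall strategy (summing orbit contributions over the pencil of lines of $P^\perp$ through $P$, with Lemma \ref{pencil at T-tangent} as the key input) is essentially the route the paper takes for the entries $\cH_1$ and $\tcS^1$, and the remaining entries can indeed be recovered this way. However, two of your geometric claims are wrong, and one of them would actually contradict the table if taken seriously. First, $P^\perp$ is \emph{not} a $\cQ_0$-tangent plane meeting $\Sigma$ in two lines: since $P\notin\Sigma$, the plane $P^\perp$ is not a $\Sigma$-plane and meets $\Sigma$ in exactly \emph{one} subline, namely the generator of $\cQ_0$ underlying the unique extended generator $\ell$ through $P$. If $P^\perp$ really contained two sublines of $\Sigma$ it would contain a whole Baer subplane and hence $q^2-q$ points of $\Tilde{\Sigma}$, not $0$. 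The second generator $m$ of $\cQ$ through $P$ in $P^\perp$ is not a $\Sigma$-line and meets $\Sigma$ (equivalently $\cQ_0$) in no point at all; it is this line that carries the $q$ further points of $\cT$ and all $q^2-q$ points of $\Tilde{\cQ}$.

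Second, your treatment of $\Tilde{\cQ}$ cannot work as described: points of $\Tilde{\cQ}=\cQ\setminus\cH$ are \emph{non-singular} (on $\cQ$ one has $Q=0$, so $\Delta=H^2$ is a nonzero square whenever $H\neq0$), and they lie on $\cQ$, whereas every line of the pencil other than $\ell$ and $m$ is tangent to $\cQ$ at $P$ and so meets $\cQ$ only in $P$. So no tangent line of either family carries any point of $\Tilde{\cQ}$; note also that $(q^2-1)/2$ does not divide $q^2-q$, so no uniform per-line count over one family could produce this entry. The correct source is $m$, which has $q^2+1$ points, $q+1$ of them in $\cT$ and none in $\cQ_0$, leaving $q^2-q$ in $\Tilde{\cQ}$. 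Relatedly, your $\Tilde{\cD}$ count reaches the right total by the wrong distribution: the singular-type tangent lines of Lemma \ref{pencil at T-tangent} have all their points (other than $P$) on $\cH_2\cup\bigcup\tS$, $\tS\in\tcS^2$, and carry no points of $\Tilde{\cD}$; the $\Tilde{\cD}$-points sit only on the non-singular-type lines, $2q$ per line (the solutions with $\xi=\pm1$), giving $2q\cdot(q^2-1)/2=(q^2-q)(q+1)$. With these corrections the line-by-line bookkeeping closes, and your final sum check against $q^4+q^2+1$ is a sensible safeguard; the paper itself obtains the $\cH_2$ and $\tcS^2$ entries instead by intersecting with the $q+1$ tangent planes $R^\perp$, $R\in P^\perp\cap\cQ_0$, each point of those orbits lying on exactly two such planes.
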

\begin{proof}
Since $P\in\cT$, $P^\perp$ contains a $\cQ_0$-generator and meets $\cQ$ in a further generator, not extension of one of $\cQ_0$. 
If $\ell$ is the unique $\cQ_0$-generator on which $P$ lies, then $\ell$ has $q^2-q$ points of $\cT$. Through $P$ there is a further $\cQ$-generator $\ell'$, not meeting $\cQ_0$. Since each generator of a quadric meets each line of the opposite ruling of the quadric, $\ell'$ meets the remaining $q$ $\cQ_0$-generators of the ruling containing $\ell$ in one point each. Therefore we have a total of $q^2$ points of $\cT$ and $q^2-q$ points of $\Tilde{\cQ}$.
If a further point of $\Sigma$ was on $P^\perp$, then $P^\perp$ would be an extended $\Sigma$-plane, which is not, namely there are no points in $\Tilde{\Sigma}$.% Claims $1.$, $2.$, $3.$ and $4.$ follow.
For each of the $q^2+q$ points of $\cQ_0$ not lying on $P^\perp$, the $\cQ_0$ tangent plane at that point meets $P^\perp$ in a line with $q-1$ points of $\Tilde{\Delta}$, of $\cH_2$ and $\Tilde{S}$ for any $\Tilde{S}\in\tcS^2$, see Proposition \ref{hermitian cone}. The claim for these orbits follows from Lemma \ref{number of tg planes per point}.

All the remaining points must be on the orbits $\cH_1$ and $\Tilde{S}\in\tcS^1$.  The case $\cO\in\tcS^1$ can be obtained from Proposition \ref{S1 partial pts distribution} using double counting taking into account the duality, and the case $\cO=\cH_1$ is a consequence of the fact that $\cH$ is a two character set, by looking at the intersection sizes computed so far.
\end{proof}

\begin{proposition}\label{Q pts distribution}
Let $P\in\Tilde{\cQ}$. The points of $P^\perp$ are distributed as in following table.
\\
\begin{comment}
\begin{enumerate}
\item $|P^\perp\cap \cQ_0|=0$;
\item $|P^\perp \cap\Tilde{\Sigma}|=q+1$;
\item $|P^\perp\cap \cT|=2(q+1)$;
\item $|P^\perp \cap\Tilde{\cQ}|=2(q^2-q)-1$;
\item $|P^\perp \cap\Tilde{\Delta}|=q^3-2q^2+2q-2$;
\item $|P^\perp\cap \cH_1|=(q+1)(q^2-2q-1)/2$;
\item $|P^\perp\cap \cH_2|=(q-1)(q+1)^2/2$;
\item $|P^\perp\cap \cS^1_{t}|=(q+1)(q^2-2q-1)/2$;
\item $|P^\perp\cap \cS^2_{t}|=(q-1)(q+1)^2/2$.
\end{enumerate}
\end{comment}
%\resizebox{\columnwidth}{}{%
\begin{tabular}{|c||c|c|c|c|c|c|}
\hline
   $\cO$: & $\cQ_0$ & $\Tilde{\Sigma}$ & $\cT$ & $\Tilde{\cQ}$ & $\Tilde{\cD}$ & $\cH_1$  \\ 
   \hline
$|P^\perp \cap \cO|$: & $0$ & $q+1$ & $2(q+1)$ & $2(q^2-q)-1$ & $(q-1)(q^2-q+1)$ & $(q+1)(q^2-2q-1)/2$  \\ 
\hline
\end{tabular}

\begin{tabular}{|c||c|c|c|}
\hline
   $\cO$:  & $\cH_2$ & $\tS\in\tcS^1 $ & $\tS\in\tcS^2$  \\ 
   \hline
$|P^\perp \cap \cO|$: &  $(q-1)(q+1)^2/2$ & $(q+1)(q^2-2q-1)/2 $ & $(q-1)(q+1)^2/2$  \\ 
\hline
\end{tabular}
\end{proposition}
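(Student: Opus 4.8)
The plan is to compute the nine entries one orbit at a time, combining the duality furnished by $\perp=\perp_\cH$, double counting against the already-established Propositions \ref{T pts distribution} and \ref{S1 partial pts distribution}, and the plane-section geometry of $\cQ$, $\cH$ and $\cD$. Two entries come for free. Since $P\in\Tilde{\cQ}$ lies on no $\cQ_0$-tangent plane (Lemma \ref{number of tg planes per point}), duality gives $P^\perp\cap\cQ_0=\emptyset$; and as $P\notin\Sigma$ the plane $P^\perp$ is not a $\Sigma$-plane, so it meets $\Sigma$ in a subline (the dual of Result \ref{points are on extended sigma line}), necessarily external to $\cQ_0$, whence $|P^\perp\cap\Tilde{\Sigma}|=q+1$. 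For the entries $\cT$ and $\tS\in\tcS^1$ I would double count incident pairs, using that $\perp$ is centralised by $G$ (which fixes $\cH$) and by $G_0$ (which moreover fixes each $\tS$), together with the transitivity of these groups on $\cT$, $\Tilde{\cQ}$ and on each $\tS$. Reading off the $\Tilde{\cQ}$-columns of Propositions \ref{T pts distribution} and \ref{S1 partial pts distribution} (the values $q^2-q$ and $q^2-2q-1$) and rescaling by $|\cT|/|\Tilde{\cQ}|$ and $|\tS|/|\Tilde{\cQ}|$ yields $|P^\perp\cap\cT|=2(q+1)$ and $|P^\perp\cap\tS|=(q+1)(q^2-2q-1)/2$.

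Next I would treat the two ``self-type'' entries. Because the polarities commute, the pole of $P^\perp$ with respect to $\cQ$ is $\rho(P)$, and $Q(\rho(P))=Q(P)^q=0$ since $P\in\cQ$; hence $P^\perp$ is a tangent plane of $\cQ$ and meets it in a pair of generators, i.e.\ in $2q^2+1$ points. Subtracting the $\cQ_0$- and $\cT$-contributions yields $|P^\perp\cap\Tilde{\cQ}|=2q^2+1-2(q+1)=2(q^2-q)-1$. Dually, since $P\notin\cH$ the plane $P^\perp$ is non-tangent to $\cH$, so $P^\perp\cap\cH$ is a non-degenerate Hermitian curve of $q^3+1$ points; as $\cH=\cQ_0\sqcup\cT\sqcup\cH_1\sqcup\cH_2$, this gives the relation $|P^\perp\cap\cH_1|+|P^\perp\cap\cH_2|=q^3+1-2(q+1)=q^3-2q-1$.

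The main obstacle is splitting $\cH_1$ from $\cH_2$ and counting the singular surfaces $\tcS^2$: the technique of the earlier propositions, which counts along the tangent planes through points of $P^\perp\cap\cQ_0$, is unavailable here precisely because $P^\perp\cap\cQ_0=\emptyset$. Instead I would count incidences against \emph{all} $(q+1)^2$ tangent planes $R^\perp$, $R\in\cQ_0$, via their traces $\ell_R=R^\perp\cap P^\perp$. Each $\ell_R$ is a line of $P^\perp$ not through $R$ (as $R\notin P^\perp$), so it meets the Hermitian cone $R^\perp\cap\cH$ in exactly $q+1$ points and, by Proposition \ref{hermitian cone}, the cone $R^\perp\cap\tS$ in exactly $q-1$ points for $\tS\in\tcS^2$. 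Counting the pairs $(X,R)$ with $X\in P^\perp\cap\cH$, $R\in\cQ_0$, $X\in R^\perp$ in two ways, and using Lemma \ref{number of tg planes per point} (a point of $\cT$, $\cH_2$, $\cH_1$ lies on $q+1$, $2$, $0$ such planes respectively), gives
\[
(q+1)^3=2(q+1)\cdot(q+1)+2\,|P^\perp\cap\cH_2|,
\]
so $|P^\perp\cap\cH_2|=(q-1)(q+1)^2/2$, and then $|P^\perp\cap\cH_1|=q^3-2q-1-|P^\perp\cap\cH_2|=(q+1)(q^2-2q-1)/2$. The identical count with $\cH$ replaced by $\tS\in\tcS^2$ (now $q-1$ in place of $q+1$, and with no $\cT$-term since $\cT\cap\tS=\emptyset$) gives $|P^\perp\cap\tS|=(q-1)(q+1)^2/2$.

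Finally, the entry $\Tilde{\cD}$ would follow as a residual: subtracting all the values already found from $|P^\perp|=q^4+q^2+1$ determines $|P^\perp\cap\Tilde{\cD}|$. As a consistency check, the resulting value makes $|P^\perp\cap\cD|=q^3+1$, exactly the secant-plane intersection number of the quasi-Hermitian surface $\cD$ (Proposition \ref{delta properties}), confirming that $P^\perp$ is a non-tangent plane of $\cD$. I expect the only delicate point to be the uniform claim $|\ell_R\cap\cH|=q+1$ and $|\ell_R\cap\tS|=q-1$ independently of $R$, which rests on the trace $\ell_R$ avoiding the cone vertex $R$; everything else is then bookkeeping.
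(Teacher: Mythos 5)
Your proposal is correct and follows essentially the same route as the paper's proof: duality together with Lemma \ref{number of tg planes per point} for $\cQ_0$ and $\Tilde{\Sigma}$, the tangency of $P^\perp$ to $\cQ$ for $\Tilde{\cQ}$ and $\cT$, double counting against Propositions \ref{T pts distribution} and \ref{S1 partial pts distribution} for $\cT$ and $\tcS^1$, the incidence count over the $(q+1)^2$ tangent planes (equivalently Lemma \ref{hermitian cone}) for $\cH_2$ and $\tcS^2$, and residuals for $\cH_1$ and $\Tilde{\cD}$. One remark: your residual for $\Tilde{\cD}$ comes out as $(q-2)(q+1)^2=q^3-3q-2$ rather than the $(q-1)(q^2-q+1)$ printed in the table; your value is the one consistent with the remaining eight entries summing to $q^4+q^2+1$, with Proposition \ref{Delta pts distribution} via double counting, and with your check $|P^\perp\cap\cD|=q^3+1$, so the table entry appears to be a typo rather than a flaw in your argument.
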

\begin{proof}
%Claims $1.$, $2.$, $3.$ and $4.$ are direct consequence of $P\in\Tilde{\cQ}$.
The cases $\cO\in\{\cQ_0,\cT,\Tilde{\cQ}\}$ are immediate consequence of $P^\perp$ being a tangent plane for the quadric $\cQ$.
By counting the couples $(Q,P^\perp)$ with $Q\in\Tilde{\Sigma}$ and $P\in\Tilde{\cQ}$, using Proposition \ref{sigmaplane pts distribution} and the duality, we get the case $\cO=\Tilde{\Sigma}$.
Each point of $\cH_2$ and $\tS$, with $\tS\in\tcS^2$, is on precisely two $\cQ_0$-tangent plane, and each of these planes meet $P^\perp$ in $q-1$ points (this last statement is consequence of Lemma \ref{hermitian cone}). This explains the cases $\cO\in\{\cH_2\}\cup\tcS^2$. As a consequence of the intersection properties of the Hermitian surface, the case $\cO=\cH_1$ follows.
Using again double counting argument on the couples $(Q,P^\perp)$ with $Q\in\tS$, with $\tS\in\tcS^1$ and $P\in\Tilde{\cQ}$, using Proposition \ref{S1 partial pts distribution} and the duality, we get the case $\cO\in\tcS^1$
The remaining points must belong to $\Tilde{\cD}$.
\end{proof}

\begin{proposition}\label{Delta pts distribution}
Let $P\in\Tilde{\cD}$. The points of $P^\perp$ are distributed as in following table.
\\
\begin{comment}
\begin{enumerate}
\item $|P^\perp\cap \cQ_0|=1$;
\item $|P^\perp \cap\Tilde{\Sigma}|=q$;
\item $|P^\perp\cap \cT|=2q$;
\item $|P^\perp \cap\Tilde{\cQ}|=q^2-2q$;
\item $|P^\perp \cap\Tilde{\Delta}|=q^3+q^2-q$;
\item $|P^\perp\cap \cH_1|=q((q+1)/2)^2$;
\item $|P^\perp\cap \cH_2|=(q^2+2q)(q-1)/2$;
\item $|P^\perp\cap \cS^1_{t}|=q((q+1)/2)^2$;
\item $|P^\perp\cap \cS^2_{t}|=(q^2+2q)(q-1)/2$.
\end{enumerate}
\end{comment}
%\resizebox{\columnwidth}{}{%
\begin{tabular}{|c||c|c|c|c|c|c|}
\hline
   $\cO$: & $\cQ_0$ & $\Tilde{\Sigma}$ & $\cT$ & $\Tilde{\cQ}$ & $\Tilde{\cD}$ & $\cH_1$\\ 
   \hline
$|P^\perp \cap \cO|$: & $1$ & $q$ & $2q$ & $q^2-2q$ & $q^3+q^2-q$ & $q((q+1)/2)^2$ \\ 
\hline
\end{tabular}

\begin{tabular}{|c||c|c|c|}
\hline
   $\cO$: &  $\cH_2$ & $\tS\in\tcS^1 $ & $\tS\in\tcS^2$  \\ 
   \hline
$|P^\perp \cap \cO|$:  & $(q^2+2q)(q-1)/2$ & $q((q+1)/2)^2$ & $(q^2+2q)(q-1)/2$  \\ 
\hline
\end{tabular}
\end{proposition}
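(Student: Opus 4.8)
The plan is to build the table orbit by orbit, exploiting the symmetry of the polarity $\perp$ together with the incidence data already recorded in the earlier tables, and to read off the largest orbit $\Tilde{\cD}$ at the end by complementation.

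First I would record the two facts that make $P\in\Tilde{\cD}$ special. Since $P\in\Tilde{\cD}$ lies on $\cD$ but satisfies $P\notin\Sigma$ and $P\notin\cT$, the analysis of System~\eqref{system} gives $\Delta(P)=0$ with $Q(P)\neq 0$ (otherwise $H(P)=0$ and $P\in\cQ\cap\cH=\cQ_0\sqcup\cT$); hence $H(P)\neq 0$ as well, so $P$ lies on neither $\cQ$ nor $\cH$. By Lemma~\ref{number of tg planes per point}, $P$ lies on exactly one $\cQ_0$-tangent plane, and since $\perp$ is a polarity the number of such planes through $P$ equals $|P^\perp\cap\cQ_0|$; this yields $|P^\perp\cap\cQ_0|=1$, say $P^\perp\cap\cQ_0=\{R\}$. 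Because $P^\perp$ contains only one point of $\cQ_0$ it cannot be a $\Sigma$-plane (a $\Sigma$-plane meets $\cQ_0$ in a conic), so by the standard Baer-subgeometry dichotomy $P^\perp\cap\Sigma$ is a single $\Sigma$-line, necessarily $\cQ_0$-tangent; this gives the entries $\cQ_0=1$ and $\Tilde{\Sigma}=q$, and shows $P^\perp$ meets $\cT$ only outside $\Sigma$.

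The entries for $\cT$ and $\Tilde{\cQ}$ I would obtain by double counting against the tables already proved. Since the point-orbit distribution of a plane $X^\perp$ is constant on each $G$-orbit of $X$, for any two orbits $\cO_1,\cO_2$ one has $|\cO_1|\cdot|P^\perp\cap\cO_2|=|\cO_2|\cdot|Q^\perp\cap\cO_1|$ with $P\in\cO_1$ and $Q\in\cO_2$. Applying this with $\cO_1=\Tilde{\cD}$ and $\cO_2=\cT$ (using Proposition~\ref{T pts distribution}) gives $|P^\perp\cap\cT|=2q$, and with $\cO_2=\Tilde{\cQ}$ (using Proposition~\ref{Q pts distribution}) gives $|P^\perp\cap\Tilde{\cQ}|$; the same relation with $\cO_2=\Tilde{\Sigma}$ (Proposition~\ref{sigmaplane pts distribution}) reconfirms $\Tilde{\Sigma}=q$. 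Equivalently, the $\Tilde{\cQ}$ count may be read directly from the conic $P^\perp\cap\cQ$ by removing its points lying in $\cQ_0\sqcup\cT$.

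For the Hermitian orbits I would use that $P\notin\cH$, so $P^\perp$ is non-tangent to $\cH$ and $P^\perp\cap\cH$ is a nondegenerate Hermitian curve on $q^3+1$ points. Combining this with the decomposition $\cH=\cQ_0\sqcup\cT\sqcup\cH_1\sqcup\cH_2$ and the counts $|P^\perp\cap\cQ_0|=1$, $|P^\perp\cap\cT|=2q$ already obtained fixes $|P^\perp\cap\cH_1|+|P^\perp\cap\cH_2|=q^3-2q$. To split this sum, and likewise to distribute the remaining (non-$\cQ$, non-$\cH$, non-$\Sigma$) points among the deformed surfaces $\tS_\xi$, I would use the dichotomy of Lemma~\ref{number of tg planes per point}: a point of $\cH_1$, or of $\tS_\xi$ with $\xi\in\cZ_1$, lies on a unique external $\Sigma$-line and on no $\cQ_0$-tangent plane, whereas a point of $\cH_2$, or of $\tS_\xi$ with $\xi\in\cZ_2$, lies on a unique $\cQ_0$-secant $\Sigma$-line and on exactly two tangent planes. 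Restricting to $P^\perp$ the line--surface incidences of Propositions~\ref{sigmalines external to q0} and~\ref{secant lines meet S2}, together with the identity $\Delta=H^2(1-1/\xi^2)$ valid on $S_\xi$, then yields the entries for $\cH_1$, $\cH_2$ and for each $\tS\in\tcS^1\cup\tcS^2$, after which the entry for $\Tilde{\cD}$ follows by subtracting all of these from the total $q^4+q^2+1$ of points of a plane of $\PG(3,q^2)$.

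The main obstacle is the fine splitting in the last paragraph: separating $\cH_1$ from $\cH_2$, and distributing the points among the individual surfaces $\tS_\xi$ with $\xi\in\cZ_2$. This is precisely the situation that made the $\tcS^2$ entries of Proposition~\ref{S1 partial pts distribution} resist the elementary line-counting arguments and forced them into the appendix, and I expect to invoke the same machinery here, analysing the degree-$(q+1)$ form that each $S_\xi$ cuts out on the lines of the pencil through $R$ inside $P^\perp$. I would then close the remaining entries both by the complement above and by a final double-counting consistency check against Propositions~\ref{T pts distribution}, \ref{Q pts distribution} and~\ref{S1 partial pts distribution}, which must all be mutually compatible since the orbits partition $\PG(3,q^2)$.
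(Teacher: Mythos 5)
Your opening two paragraphs are sound and essentially reproduce the paper's argument: the entries for $\cQ_0$ and $\Tilde{\Sigma}$ come from Lemma \ref{number of tg planes per point} and duality (and your observation that the unique $\Sigma$-line of $P^\perp$ must pass through the single point $R$ of $P^\perp\cap\cQ_0$, hence is tangent, is the right justification), while the polarity double count $|\cO_1|\cdot|P^\perp\cap\cO_2|=|\cO_2|\cdot|Q^\perp\cap\cO_1|$ is exactly the tool the paper uses repeatedly, and it does deliver $|P^\perp\cap\cT|=2q$ from Proposition \ref{T pts distribution}. The identity $|P^\perp\cap\cH_1|+|P^\perp\cap\cH_2|=q^3-2q$ from the nondegenerate Hermitian curve $P^\perp\cap\cH$ is also a legitimate constraint.

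The gap is in your mechanism for splitting $\cH_1$ from $\cH_2$ and for computing the individual $\tS$ entries. You propose to ``restrict to $P^\perp$ the line--surface incidences of Propositions \ref{sigmalines external to q0} and \ref{secant lines meet S2}'', but those propositions describe how external and secant $\Sigma$-lines meet the surfaces, and $P^\perp$ contains \emph{no} external or secant $\Sigma$-line: its unique $\Sigma$-line is tangent to $\cQ_0$. Every external or secant $\Sigma$-line meets $P^\perp$ in a single point, and knowing that the whole line carries $q+1$ points of $\tS$ says nothing about whether its one intersection point with $P^\perp$ lies on $\tS$; so this incidence count does not localise to $P^\perp$. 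The paper's actual device is dual: it intersects $P^\perp$ with the $(q+1)^2$ tangent planes of $\cQ_0$. By Lemma \ref{hermitian cone preparatory lemma} and Proposition \ref{hermitian cone}, each such tangent plane other than $R^\perp$ meets $P^\perp$ in a line carrying $q-1$ points of $\cH_2$ and $q-1$ points of each $\tS\in\tcS^2$, and since each point of $\cH_2$ or of $\tS\in\tcS^2$ lies on exactly two tangent planes (Lemma \ref{number of tg planes per point}), one divides the resulting incidence count by two. The entries for $\tS\in\tcS^1$ then follow from the same polarity double count you already use, applied against Proposition \ref{S1 partial pts distribution} --- you relegate this to a ``consistency check'' and instead contemplate re-running the appendix's Hermitian-pencil machinery, which is unnecessary here. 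Finally, $\Tilde{\cD}$ by complementation is fine (the paper computes it directly from the tangent-plane traces), and your proposed cross-checks against the earlier tables are worth carrying out explicitly, since they are exactly the place where arithmetic slips in these orbit distributions would surface.
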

\begin{proof}
Since each point of $\Tilde{\cD}$ is on precisely one $\Sigma$-plane tangent to $\cQ_0$, by duality, we obtain the values for $\cO=\cQ_0,\Tilde{\Sigma}$.
Out of the $(q+1)^2$ $\Sigma$-planes tangent to $\cQ_0$, all but one meet $P^\perp$ in $q-1$ points of each of $\tS$, with $\tS\in\tcS^2$ and $\cH_2$, and one in a line containing a point of $\cQ_0$. This line is the unique $\Sigma$-line lying on $P^\perp$ and it is tangent to $\cQ_0$. Moreover, all but $q$ of the $q^2+2q$ $\Sigma$-planes tangent to $\cQ_0$ without $\Sigma$-lines meet $\cT$ in two points. Since each point of $\cT$ is contained in $q+1$ such planes, the value $|P^\perp\cap \cT|$ is explained. The case $\cO=\Tilde{\cQ}$ is a consequence of the intersection properties of quadrics.
Out of the $q^2+2q$ $\Sigma$-planes tangent to $\cQ_0$ without $\Sigma$-lines, $q$ meet $\Tilde{\cD}$ in precisely one point, the remaining $q^2+q$ in $q-1$ points. This explains the case $\cO=\Tilde{\cD}$.
The intersection properties of Hermitian surfaces, together with the value computed for $\cO=\cH_2$ explains the case $\cO=\cH_1$.
To get the value $|P^\perp\cap\tS|$, with $\tS\in\tcS^1$, it is enough to double count on the couples $(Q,P^\perp)$ with $Q\in\Tilde{\cD}$ and $P\in\tS$, using Proposition \ref{S1 partial pts distribution} and the duality.
\end{proof}

\begin{proposition}\label{H2 pts distribution}
Let $P\in\cH_2$. The points of $P^\perp$ are distributed as in following table.
\\
\begin{comment}
\begin{enumerate}
\item $|P^\perp\cap \cQ_0|=2$;
\item $|P^\perp \cap\Tilde{\Sigma}|=q-1$;
\item $|P^\perp\cap \cT|=2(q-1)$;
\item $|P^\perp \cap\Tilde{\cQ}|=(q-1)^2$;
\item $|P^\perp \cap\Tilde{\Delta}|=$;
\item $|P^\perp\cap \cH_1|=(q+1)(q-1)^2/2$;
\item $|P^\perp\cap \cH_2|=(q+1)^2-2)(q-1)/2+q^2$;
\item $|P^\perp\cap \cS^1_{t}|=(q+1)(q-1)^2/2$;
\item $|P^\perp\cap \cS^2_{t}|=((q+1)^2-2)(q-1)/2$.
\end{enumerate}
\end{comment}
%\resizebox{\columnwidth}{}{%
\begin{tabular}{|c||c|c|c|c|c|c|}
\hline
   $\cO$: & $\cQ_0$ & $\Tilde{\Sigma}$ & $\cT$ & $\Tilde{\cQ}$ & $\Tilde{\cD}$ & $\cH_1$\\ 
   \hline
$|P^\perp \cap \cO|$: & $2$ & $q-1$ & $2(q-1)$ & $(q-1)^2$ & $(q-1)^2(q+1)$ & $(q+1)(q-1)^2/2$ \\ 
\hline
\end{tabular}

\begin{tabular}{|c||c|c|c|}
\hline
   $\cO$: &  $\cH_2$ & $\tS\in\tcS^1 $ & $\tS\in\tcS^2$  \\ 
   \hline
$|P^\perp \cap \cO|$: &$((q+1)^2-2)(q-1)/2+q^2$ & $(q+1)(q-1)^2/2$ & $((q+1)^2-2)(q-1)/2$  \\ 
\hline
\end{tabular}
\end{proposition}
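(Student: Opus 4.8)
The plan is to read off each column of the table from the geometry of the tangent plane $P^\perp$, handling the singular columns ($\cH_1$, $\cH_2$ and the $\tS\in\tcS^2$) by a double count over the $\cQ_0$-tangent planes, and the remaining columns ($\tS\in\tcS^1$ and $\Tilde{\cD}$) by duality against the earlier point distributions.

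First I would fix the ambient picture. Since $P\in\cH_2$, Lemma \ref{number of tg planes per point} gives that $P$ lies on exactly two $\cQ_0$-tangent planes $\mu_1=R_1^\perp$ and $\mu_2=R_2^\perp$ with $R_1,R_2\in\cQ_0$; dualising, $P^\perp\cap\cQ_0=\{R_1,R_2\}$, which is the first entry. As $P^\perp$ meets $\cQ_0$ in only two points it cannot be a $\Sigma$-plane, so by Result \ref{points are on extended sigma line} the set $P^\perp\cap\Sigma$ is a single $\Sigma$-line $m$, necessarily a $\cQ_0$-secant; this gives $|P^\perp\cap\Tilde{\Sigma}|=q-1$. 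For $\cT$ I would note that $m$ is the unique $\Sigma$-line of $P^\perp$, so no extended generator lies in $P^\perp$; the $2(q-1)$ generators of $\cQ_0$ not through $R_1$ or $R_2$ are skew to $m$ in $\Sigma$, hence their extensions meet $P^\perp$ outside $\Sigma$ in one point of $\cT$ each, giving $|P^\perp\cap\cT|=2(q-1)$. The section $P^\perp\cap\cQ$ is a conic meeting $\cQ_0\cup\cT$ in the $2q$ points already located; being non-degenerate it has $q^2+1$ points, leaving $(q-1)^2$ points of $\Tilde{\cQ}$.

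The main work is the singular part. Because $P\in\cH$, the section $P^\perp\cap\cH$ is a Hermitian cone with vertex $P$ and $q^3+q^2+1$ points, and two of its $q+1$ generators are the lines $PR_1,PR_2$. The key observation is that the two polarities agree at points of $\cQ_0$, so $R_i\in P^{\perp_\cQ}$, i.e. the bilinear form of $Q$ vanishes on $(P,R_i)$; thus along $PR_i$ one has $Q(\mu P+\nu R_i)=\mu^2 Q(P)$ while $H\equiv 0$, whence $\Delta=\mu^{2(q+1)}\Delta(P)$ differs from $\Delta(P)$ only by a square. Since $\Delta(P)$ is a non-square (Proposition \ref{nonsingularity}), every one of the $q^2$ points of $PR_i\setminus\{R_i\}$ is singular and off $\cQ$, hence lies in $\cH_2$. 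I would then double count the incident pairs $(X,R^\perp)$ with $X\in P^\perp\cap\cH_2$ and $R\in\cQ_0$: each such $X$ lies on exactly two $\cQ_0$-tangent planes; each of the $(q+1)^2-2$ planes $R^\perp$ with $R\neq R_1,R_2$ meets $P^\perp$ in a line carrying $q-1$ points of $\cH_2$ (Proposition \ref{hermitian cone}), while $\mu_1,\mu_2$ meet $P^\perp$ exactly in $PR_1,PR_2$ and contribute $q^2$ points each. This yields $|P^\perp\cap\cH_2|=((q+1)^2-2)(q-1)/2+q^2$, the extra $q^2$ being precisely the contribution of the two fully-singular cone generators; the $\cH_1$ entry then follows by subtracting $\cQ_0$, $\cT$ and $\cH_2$ from the cone total. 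The identical double count gives the $\tS\in\tcS^2$ entry, except that now $\mu_1,\mu_2$ contribute nothing since $PR_i\subseteq\cH$ is disjoint from the surfaces of $\tcS^2$, leaving $((q+1)^2-2)(q-1)/2$.

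Finally, the entry for $\tS\in\tcS^1$ I would obtain by double counting the incident pairs $(X,P)$ with $X$ ranging over a fixed surface of $\tcS^1$ and $P\in\cH_2$, feeding in the $\cH_2$-column of Proposition \ref{S1 partial pts distribution} together with the duality of $\perp$; and $|P^\perp\cap\Tilde{\cD}|$ I would read off as the complement, since every point of $\PG(3,q^2)$ lies in exactly one orbit. The hard part is the $\cH_2$ count: one must correctly single out the two exceptional tangent planes through $P$, show that their traces on $P^\perp$ are exactly the cone generators $PR_1,PR_2$, and—crucially—use the commutativity of the polarities to prove that these generators consist entirely of $\cH_2$-points, which is what produces the anomalous $+q^2$ distinguishing this distribution from the others.
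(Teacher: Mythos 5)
Your strategy coincides with the paper's for almost every column: duality for $\cQ_0$ and $\Tilde{\Sigma}$; the non-degenerate conic $P^\perp\cap\cQ$ for $\Tilde{\cQ}$; the double count over the $(q+1)^2$ tangent planes $R^\perp$, $R\in\cQ_0$, with the two exceptional planes $R_1^\perp,R_2^\perp$ cutting $P^\perp$ exactly in the cone generators $PR_i\subseteq\cH$, for $\cH_2$ and for $\tS\in\tcS^2$; subtraction inside the Hermitian cone for $\cH_1$; and the double count against Proposition \ref{S1 partial pts distribution} for $\tS\in\tcS^1$. Your two deviations are harmless or better: the direct argument for $\cT$ (the $2(q-1)$ extended generators missing $R_1,R_2$ are skew to the secant subline $m$, hence each meets $P^\perp$ in a single point off $\Sigma$) replaces the paper's double count against Proposition \ref{T pts distribution}, and your verification that $PR_i\setminus\{R_i\}\subseteq\cH_2$ via $\Delta(\mu P+\nu R_i)=\mu^{2(q+1)}\Delta(P)$ is a correct substitute for citing Proposition \ref{hermitian cone} (it does use, correctly, that the two polarities agree at points of $\cQ_0$).

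Two of your steps, however, do not land on the tabulated values, and you should be aware that this is where the work actually lies. First, reading off $|P^\perp\cap\Tilde{\cD}|$ ``as the complement'' does not give $(q-1)^2(q+1)$: the entries you have already (correctly) established, together with $(q-3)$ surfaces in $\tcS^1$ and $(q-1)$ in $\tcS^2$, leave a residue of $(q+2)(q-1)^2$ for $\Tilde{\cD}$ (for $q=5$ the printed row sums to $635$ rather than $q^4+q^2+1=651$). The same value $(q+2)(q-1)^2$ comes out of the paper's own route, namely double counting against the $\cH_2$-entry $(q^2+2q)(q-1)/2$ of Proposition \ref{Delta pts distribution}, so your complementation would simply expose an inconsistency in the stated table rather than prove it. Second, the double count you propose for $\tS\in\tcS^1$, fed with the printed $\cH_2$-entry $(q+1)(q^2-2q-1)/2+q^2$ of Proposition \ref{S1 partial pts distribution}, yields $(q-1)(q^3+q^2-3q-1)/(2(q+1))$, which is not even an integer; to arrive at the tabulated $(q+1)(q-1)^2/2$ you need $|Q^\perp\cap\cH_2|=(q+1)^2(q-1)/2$ for $Q\in\tS\in\tcS^1$, which you can in fact obtain directly by the same tangent-plane count (no point of $\cQ_0$ lies in $Q^\perp$, so all $(q+1)^2$ planes contribute $q-1$ points each, every point counted twice). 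In short, your method is the paper's method and is sound where the paper is, but carried out honestly it proves a corrected table, not the one stated; a complete write-up must either fix the $\Tilde{\cD}$ entry or identify which other entry you believe compensates for the missing $(q-1)^2$ points.
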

\begin{proof}
Since any point of $\cH_2$ is on two $\cQ_0$ tangent planes, by duality, $P^\perp$ has two points of $\cQ_0$. 

As a consequence, the plane $P^\perp$ meets $\Sigma$ precisely in a subline, which is a $2$-secant for $\cQ_0$, and therefore in $q-1$ points of $\Tilde{\Sigma}$.
The value for $\cO=\cT$ ($\cO=\Tilde{\cD}$) is obtained by double counting the couples $(Q,P^\perp)$ with $Q\in\cT$ ($Q\in\Tilde{\cD}$) and $P\in\cH_2$, using Proposition \ref{T pts distribution} (Proposition \ref{Delta pts distribution}) and the duality.
Each of the $(q+1)^2-2$ points $P_0$ of $\cQ_0$ not on $P^\perp$ is such that $P_0^\perp$ meets $P^\perp$ in precisely $q-1$ points. For the remaining two points $R,S$, the tangent dual planes $R^\perp$ and $S^\perp$ meet $P^\perp$ in a line $\ell_R$, $\ell_S$, which, since contains both $R$ and $P$ must be a generator. Considering that each point in $\cH_2$ and $\tS\in\tcS^2$ is contained in two tangent planes, we obtain the value for $\cO\in\{\cH_2\}\cup\tcS^2$. The value for $\cO=\cH_1$ follows from this and the intersection properties of the Hermitian surface.  
The value $|P^\perp\cap\tS|$, with $\tS\in\tcS^1$, is obtained by double counting on the couples $(Q,P^\perp)$ with $Q\in\cH_2$ and $P\in\tS$, using Proposition \ref{S1 partial pts distribution} and the duality.

\end{proof}

\begin{proposition}\label{S2 pts distribution}
Let $P\in\tS$, with $\tS\in\tcS^2$. The points of $P^\perp$ are distributed as in following table.
\\
\begin{comment}
\begin{enumerate}
\item $|P^\perp\cap \cQ_0|=2$;
\item $|P^\perp \cap\Tilde{\Sigma}|=q-1$;
\item $|P^\perp\cap \cT|=2(q-1)$;
\item $|P^\perp \cap\Tilde{\cQ}|=(q-1)^2$;
\item $|P^\perp \cap\Tilde{\Delta}|=$;
\item $|P^\perp\cap \cH_1|=(q-1)^2(q+1)/2$;
\item $|P^\perp\cap \cH_2|=(q+1)^2-2)(q-1)/2$;
\item $|P^\perp\cap \cS^1_{t}|=(q-1)^2(q+1)/2$;
\item $|P^\perp\cap \hat{S}|=(q+1)^2-2)(q-1)/2$, for $\tS\neq\hat{S}\in\tcS^2$;
\item $|P^\perp\cap \tS|=(q+1)^2-2)(q-1)/2+q^2$.
\end{enumerate}
\end{comment}
%\resizebox{\columnwidth}{}{%
\begin{tabular}{|c||c|c|c|c|c|c|c|}
\hline
   $\cO$: & $\cQ_0$ & $\Tilde{\Sigma}$ & $\cT$ & $\Tilde{\cQ}$ & $\Tilde{\cD}$ & $\cH_1$& $\cH_2$   \\ 
   \hline
$|P^\perp \cap \cO|$: & $2$ & $q-1$ & $2(q-1)$ & $(q-1)^2$ & $(q-1)^2(q+1)$ & $(q-1)^2(q+1)/2$ & \small{$((q+1)^2-2)(q-1)/2$}\\ 
\hline
\end{tabular}
\begin{tabular}{|c||c|c|c|}
\hline
   $\cO$: & $\tS\in\tcS^1 $ & $\tS\neq\hat{S}\in\tcS^2$&$\tS$  \\ 
   \hline
$|P^\perp \cap \cO|$:  & $(q-1)^2(q+1)/2$ & $((q+1)^2-2)(q-1)/2$& $((q+1)^2-2)(q-1)/2+q^2$  \\ 
\hline
\end{tabular}
\end{proposition}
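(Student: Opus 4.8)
The plan is to run the same machinery as in the proof of Proposition \ref{H2 pts distribution}, exploiting that a point $P\in\tS$ with $\tS\in\tcS^2$ shares all of its incidence data with $\cQ_0$ that a point of $\cH_2$ has. First I would record that, by Lemma \ref{number of tg planes per point}, $P$ is singular and lies on exactly two $\cQ_0$-tangent planes $R^\perp,S^\perp$ with $R,S\in\cQ_0$; dually this gives $P^\perp\cap\cQ_0=\{R,S\}$, and since $P\notin\Sigma$ the plane $P^\perp$ meets $\Sigma$ in the $\cQ_0$-secant subline $\ell=RS$. This yields at once the entries $\cQ_0\mapsto 2$ and $\Tilde{\Sigma}\mapsto q-1$. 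As no $\cQ_0$-generator lies in $P^\perp$ (its trace on $\Sigma$ is the secant $\ell$), the generator-counting argument gives $\cT\mapsto 2(q-1)$, and the intersection properties of $\cQ$ give $\Tilde{\cQ}\mapsto(q-1)^2$, exactly as in Proposition \ref{H2 pts distribution}.

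Next I would isolate the one genuine difference from the $\cH_2$ case. Because $\tS=S_\xi\setminus\cH$, here $P\notin\cH$, so $P^\perp$ is \emph{not} a tangent plane of $\cH$ but a nondegenerate plane section, meeting $\cH$ in a nondegenerate Hermitian curve of $q^3+1$ points. Consider the two lines $\ell_R=R^\perp\cap P^\perp$ and $\ell_S=S^\perp\cap P^\perp$; both pass through $P$ (as $P\in R^\perp\cap S^\perp$) and through $R$, resp. $S$. Applying Lemma \ref{hermitian cone preparatory lemma} to $R$ (resp. $S$) and the point $P\in S_\xi\cap R^\perp$ shows $\ell_R,\ell_S\subseteq S_\xi$, and by Proposition \ref{hermitian cone} every point of these lines other than its $\cQ_0$-vertex lies in $\tS$. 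In the corresponding situation for $P\in\cH_2$ these two lines are instead generators of $\cH$, which is precisely why the extra $q^2$ points land on $\cH_2$ there and will land on $\tS$ here.

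The core computation is a double count, over the $(q+1)^2$ tangent planes $P_0^\perp$ with $P_0\in\cQ_0$, of the incidences $(X,P_0^\perp)$ with $X$ in the set under study and $X\in P_0^\perp\cap P^\perp$; each relevant $X$ lies on exactly two such planes, so the left-hand side equals twice the desired count. For $\cH_2$ and for any $\hat S\ne\tS$ in $\tcS^2$, the two planes $R^\perp,S^\perp$ contribute nothing, since $\ell_R,\ell_S\subseteq S_\xi$ meet $\cH$ and any other member of the pencil only in the base $\cQ_0\sqcup\cT$; the remaining $(q+1)^2-2$ planes each cut $P^\perp$ in a line meeting the $q-1$ relevant cone-lines of Proposition \ref{hermitian cone} once, giving $\cH_2\mapsto((q+1)^2-2)(q-1)/2$ and $\hat S\mapsto((q+1)^2-2)(q-1)/2$. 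For the self-surface $\tS$ the planes $R^\perp,S^\perp$ now each contribute $q^2$ points (those of $\ell_R,\ell_S$ off their vertices), so the right-hand side becomes $2q^2+((q+1)^2-2)(q-1)$ and division by two yields $\tS\mapsto q^2+((q+1)^2-2)(q-1)/2$. The entry $\cH_1\mapsto(q-1)^2(q+1)/2$ then follows from $\cH=\cQ_0\sqcup\cT\sqcup\cH_1\sqcup\cH_2$ together with the Hermitian-curve count $|P^\perp\cap\cH|=q^3+1$, while $\Tilde{\cD}$ and the $\tcS^1$-entry are obtained by double counting pairs $(Q,P^\perp)$ with $Q$ in the corresponding orbit, using Propositions \ref{Delta pts distribution} and \ref{S1 partial pts distribution} and the self-duality of $\perp$, exactly as for Proposition \ref{H2 pts distribution}.

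The hard part will be the bookkeeping in the double count for the self-surface: one must verify via Lemma \ref{hermitian cone preparatory lemma} and Proposition \ref{hermitian cone} that $\ell_R$ and $\ell_S$ really lie on $S_\xi$ and each carry exactly $q^2$ points of $\tS$, and confirm that the shared point $P$ together with the two coincident tangent planes is handled correctly by the factor of two on the left-hand side, so that the net surplus of $\tS$ over a generic $\hat S\in\tcS^2$ is exactly $q^2$ rather than $2q^2$. Everything else reduces, mutatis mutandis, to the incidence counts already established in the preceding propositions.
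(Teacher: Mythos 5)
Your overall strategy is exactly the paper's: the authors prove this proposition in one line by declaring it ``completely analogous'' to Proposition \ref{H2 pts distribution} with $\tS$ in place of $\cH_2$, and you correctly identify all the ingredients of that analogy (duality for the $\cQ_0$ and $\Tilde{\Sigma}$ entries, the double count over the $(q+1)^2$ tangent planes for $\cH_2$ and the members of $\tcS^2$, the Hermitian-curve count $q^3+1$ for $\cH_1$, and the double counts against Propositions \ref{Delta pts distribution} and \ref{S1 partial pts distribution} for $\Tilde{\cD}$ and $\tcS^1$). You also correctly isolate the one point where the analogy is not mechanical, namely why the surplus of $q^2$ points migrates from $\cH_2$ to the self-surface.

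However, your justification of that key step contains a genuine error. You assert that $\ell_R=R^\perp\cap P^\perp$ and $\ell_S=S^\perp\cap P^\perp$ pass through $P$ ``as $P\in R^\perp\cap S^\perp$''. That $P$ lies on $R^\perp$ and $S^\perp$ is true, but $P\in\ell_R$ would additionally require $P\in P^\perp$, i.e.\ $P\in\cH$ --- which fails precisely because $\tS=S_\xi\setminus\cH$. (In Proposition \ref{H2 pts distribution} the point is on $\cH$, which is exactly why the argument ``$\ell_R$ contains $R$ and $P$, hence is a generator'' works there.) Consequently your appeal to Lemma \ref{hermitian cone preparatory lemma} proves only that the line $RP$ lies on $S_\xi$; $RP$ is not $\ell_R$, and it is $\ell_R$, not $RP$, that lies in $P^\perp$ and carries the surplus. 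Since $\ell_R$ is a line through $R$ in $R^\perp$ that is neither a generator nor a $\Sigma$-line, Proposition \ref{hermitian cone} only tells you it lies in $\cH$ or in some $S_{\xi'}$ with $\xi'\in\cZ_2$; identifying \emph{which} one requires a separate computation. Working in coordinates on $R^\perp$ (with $R=(1,1,1,1)$, $u=\alpha-\beta$, $v=\alpha-\gamma$, so that $H=u^qv+uv^q$ and $Q=-uv$ there) one finds that $\ell_R$ is the line of the pencil indexed by $(u_P^q:-v_P^q)$, with $H(\ell_R)=-H(P)$ and $Q(\ell_R)=-Q(P)^q$; hence $\ell_R\subseteq S_{\xi'}$ with $\xi'=(-1)^{(q-1)/2}\xi^{q}\in\{\xi,-\xi\}$. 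This closes the gap and confirms that the $q^2$ surplus lands on one member of the $G$-orbit $\tS_\xi\cup\tS_{-\xi}$ rather than on $\cH_2$ or a different pair, but note that it is not always the surface $\tS_\xi$ itself; you should either carry this sign through or observe that it only permutes the labels within the pair $\{\tS_\xi,\tS_{-\xi}\}$ already identified as a single $G$-orbit in Theorem \ref{pointorbits}. Apart from this one step, the remaining bookkeeping you describe does reduce to the earlier propositions as you claim.
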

\begin{proof}

The proof is completely analogous to that of Proposition \ref{H2 pts distribution}, replacing $\tS$ for $\cH_2$ when necessary.
\end{proof}

\begin{proposition}\label{H1 pts distribution}
Let $P\in\cH_1$. The points of $P^\perp$ are distributed as in following table.
\\
\begin{comment}
\begin{enumerate}
\item $|P^\perp\cap \cQ_0|=0$;
\item $|P^\perp \cap\Tilde{\Sigma}|=q+1$;
\item $|P^\perp\cap \cT|=2(q+1)$;
\item $|P^\perp \cap\Tilde{\cQ}|=q^2-2q-1$;
\item $|P^\perp \cap\Tilde{\Delta}|=(q+1)^2(q-2)$;
\item $|P^\perp\cap \cH_1|=(q^3+q^2-3q-1)/2$;
\item $|P^\perp\cap \cH_2|=(q-1)(q+1)^2/2$;
\item $|P^\perp\cap \cS^1_{t}|=(q^3-q^2-3q-1)/2$;
\item $|P^\perp\cap \cS^2_{t}|=(q-1)(q+1)^2/2$.
\end{enumerate}
\end{comment}
%\resizebox{\columnwidth}{}{%
\begin{tabular}{|c||c|c|c|c|c|c|c|}
\hline
   $\cO$: & $\cQ_0$ & $\Tilde{\Sigma}$ & $\cT$ & $\Tilde{\cQ}$ & $\Tilde{\cD}$ & $\cH_1$ \\ 
   \hline
$|P^\perp \cap \cO|$: & $0$ & $q+1$ & $2(q+1)$ & $q^2-2q-1$ & $(q-2)(q+1)^2$ & $(q+1)(q^2-2q-1)/2+q^2$ \\ 
\hline
\end{tabular}

\begin{tabular}{|c||c|c|c|}
\hline
   $\cO$: &  $\cH_2$ & $\tS\in\tcS^1 $& $\tS\in\tcS^2 $ \\ 
   \hline
$|P^\perp \cap \cO|$: & $(q-1)(q+1)^2/2$ & $(q+1)(q^2-2q-1)/2+q^2 $ &$(q+1)(q^2-2q-1)/2$\\ 
\hline
\end{tabular}
\end{proposition}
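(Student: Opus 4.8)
The plan is to compute $|P^\perp\cap\cO|$ for a single representative $P\in\cH_1$ of each orbit $\cO$, exploiting that $\cH_1$ is one $G_0$-orbit and that $\perp$ is an involution commuting with $G_0$; hence every intersection number is constant along the orbits involved and can be found either directly or by double counting $\perp$-incident pairs against the distributions already obtained in Propositions \ref{T pts distribution}, \ref{H2 pts distribution}, \ref{S2 pts distribution} and \ref{S1 partial pts distribution}. The one feature separating this case from Proposition \ref{S1 partial pts distribution} is that now $P\in\cH$: the plane $P^\perp$ is the tangent plane to $\cH$ at $P$, so it meets $\cH$ in a Hermitian cone of $q+1$ lines through $P$, that is in $q^3+q^2+1$ points, rather than in a non-degenerate unital. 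I expect the argument to run parallel to that of Proposition \ref{S1 partial pts distribution}, with the surplus of the cone over a unital accruing to $\cH_1$.

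First I would settle the entries that come from the polarity, the subgeometry, and double counting. As $P$ is non-singular it lies on no $\cQ_0$-tangent plane by Lemma \ref{number of tg planes per point}, so $P^\perp$ contains no point of $\cQ_0$; since $P\notin\Sigma$, the plane $P^\perp$ meets $\Sigma$ in a subline, which is external to $\cQ_0$ and so accounts for the $q+1$ points of $\Tilde{\Sigma}$. The values on $\cT$, on $\cH_2$ and on the surfaces of $\tcS^2$ then drop out by double counting $\perp$-incident pairs against Propositions \ref{T pts distribution}, \ref{H2 pts distribution} and \ref{S2 pts distribution}, using the appropriate orbit-size ratios. Finally, because $\perp_\cH$ and $\perp_\cQ$ commute, the polar plane of a point off $\cQ$ is not tangent to $\cQ$; as $P\notin\cQ$ the plane $P^\perp$ meets $\cQ$ in a non-degenerate conic, and removing the $\cQ_0$- and $\cT$-points leaves $|P^\perp\cap\Tilde{\cQ}|=q^2-2q-1$.

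The cone furnishes the $\cH_1$-entry: from $\cH=\cQ_0\sqcup\cT\sqcup\cH_1\sqcup\cH_2$ and $|P^\perp\cap\cH|=q^3+q^2+1$, subtracting the $\cQ_0$-, $\cT$- and $\cH_2$-contributions already found yields $|P^\perp\cap\cH_1|=(q+1)(q^2-2q-1)/2+q^2$. The surface-entries of $\tcS^1$ I would obtain from the symmetry of $\perp$: since $|\cH_1|=|\tS|$ for $\tS\in\tcS^1$, the count $|P^\perp\cap\tS|$ with $P\in\cH_1$ equals the count $|Q^\perp\cap\cH_1|$ with $Q\in\tS$, which is the $\cH_1$-entry of Proposition \ref{S1 partial pts distribution}. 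Finally $\Tilde{\cD}$ is recovered either as the residue of the $q^4+q^2+1$ points of $P^\perp$ once the other orbits are removed, or directly as in Proposition \ref{S1 partial pts distribution}, where the $q+1$ points of $P^\perp\cap\Tilde{\Sigma}$ sit one in each $\cQ_0$-tangent plane and each such plane meets $P^\perp$ in $q-2$ further points of $\Tilde{\cD}$.

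Because Proposition \ref{H1 pts distribution} is the last of these point-distribution statements, every partner distribution is already available and the bulk of the proof is mechanical double counting; the only genuinely geometric ingredient is the tangency of $P^\perp$ to $\cH$. Consequently the step needing the most care is the $\cH_1$-entry: one must identify the section $P^\perp\cap\cH$ as a cone rather than a unital and then ensure the extra $q^2$ points land in $\cH_1$. This is forced by the order of operations, since $\cH_2$ is pinned independently by double counting against Proposition \ref{H2 pts distribution}, so the cone bookkeeping pushes the surplus into $\cH_1$; the concluding check that these intersection numbers account for all $q^4+q^2+1$ points of $P^\perp$ then certifies that no cone point has been miscounted.
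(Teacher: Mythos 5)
Your plan follows the paper's own proof of this proposition almost step for step: duality plus Lemma \ref{number of tg planes per point} for the $\cQ_0$- and $\Tilde{\Sigma}$-entries, double counting of $\perp$-incident pairs against the earlier distribution propositions for $\cT$, $\cH_2$ and the surfaces, the non-degenerate conic section for $\Tilde{\cQ}$, and the tangent-plane bookkeeping (equivalently the residue) for $\Tilde{\cD}$. The one step you make explicit that the paper leaves implicit is the identification of $P^\perp\cap\cH$ as a Hermitian cone of $q^3+q^2+1$ points, from which the $\cH_1$-entry falls out by subtraction; that is correct and is exactly the intended ``intersection properties of the Hermitian surface'' argument.

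The gap is that you never actually evaluate your double counts, and two of them do not return the entries in the table you claim to be proving. Since $|\cH_1|=|\tS|=(q+1)(q-1)^2q^2/2$ for $\tS\in\tcS^1$, your symmetry argument yields $|P^\perp\cap\tS|=(q+1)(q^2-2q-1)/2$, the $\cH_1$-entry of Proposition \ref{S1 partial pts distribution}, whereas the table asserts $(q+1)(q^2-2q-1)/2+q^2$; and double counting against Proposition \ref{S2 pts distribution}, with $|\tS|=(q-1)q^2(q+1)^2/2$ for $\tS\in\tcS^2$, yields $|P^\perp\cap\tS|=(q-1)(q+1)^2/2$, whereas the table asserts $(q+1)(q^2-2q-1)/2$. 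As it happens, the values your method produces are the ones consistent with $|P^\perp\cap\cH|=q^3+q^2+1$, with the unital counts underlying Propositions \ref{S1 partial pts distribution} and \ref{S2 pts distribution}, and with the requirement that the row sum to $|P^\perp|=q^4+q^2+1$ (the printed row sums to $q^4-2q^2+q+1$), so the table as printed appears to carry misprints in exactly these two columns rather than your method being at fault. But a proof of the stated proposition must either reproduce its numbers or identify and correct the discrepancy, and yours silently does neither: the arithmetic and the final consistency check against $q^4+q^2+1$ are not optional here, since they are what exposes (and resolves) the mismatch.
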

\begin{proof}
Since points on $\cH_1$ are on no $\cQ_0$-tangent plane, by duality $P^\perp$ has no points in $\cQ_0$. Since $P^\perp$ must contain precisely one $\Sigma$-line, that must be external to $\cQ_0$.
On the other hand, each of the $q+1$ points of $P^\perp\cap\Tilde{\Sigma}$ lies on precisely $q+1$ $\Sigma$-planes tangent to $\cQ_0$. Since there are a total of $(q+1)^2$ of such planes, it follows that each of them contains precisely one point of $\Tilde{\Sigma}$. As a consequence, each such plane meets $P^\perp$ in precisely one point of $\Tilde{\Sigma}$. This implies that it meets $\Tilde{\cD}$ in $q-2$ points, explaining the case $\cO=\Tilde{\cD}$.
As usual, by looking at the intersections with the $\Sigma$-planes tangent to $\cQ_0$ we can explain the cases $\cO\in\{\cT,\cH_2\}\cup\tcS^2$, and the value for $\cO=\Tilde{\cQ}$ follows from the intersection properties of quadrics.
The value $|P^\perp\cap\tS|$, with $\tS\in\tcS^1$, it's obtained by double counting on the couples $(Q,P^\perp)$ with $Q\in\cH_2$ and $P\in\tS$, using Proposition \ref{S1 partial pts distribution} and the duality.
\end{proof}

\begin{comment}
We are now finally able to provide the last point orbit distribution.
\begin{proposition}\label{S1 pts distribution}
Let $P\in\cS^1_t$. Then
\begin{enumerate}
\item $|P^\perp\cap \cQ_0|=0$;
\item $|P^\perp \cap\Tilde{\Sigma}|=q+1$;
\item $|P^\perp\cap \cT|=2(q+1)$;
\item $|P^\perp \cap\Tilde{\cQ}|=q^2-2q-1$;
\item $|P^\perp \cap\Tilde{\Delta}|=(q-2)(q+1)^2$;
\item $|P^\perp\cap \cH_1|=(q^3-q^2-3q-1)/2$;
\item $|P^\perp\cap \cH_2|=(q-1)(q+1)^2/2$;
\item $|P^\perp\cap \cS^1_{t}|=(q^3+q^2-3q-1)/2$;
\item $|P^\perp\cap \cS^1_{\Tilde{t}}|=(q^3-q^2-3q-1)/2$, if $\Tilde{t}\neq t$;
\item $|P^\perp\cap \cS^2_{t}|=(q-1)(q+1)^2/2$.
\end{enumerate}
%\resizebox{\columnwidth}{}{%
\begin{tabular}{|c||c|c|c|c|c|c|c|c|c|}
\hline
   $\cO$: & $\cQ_0$ & $\Tilde{\Sigma}$ & $\cT$ & $\Tilde{\cQ}$ & $\Delta$ & $\cH_1$ & $\cH_2$ & $\tS\in\tcS^1 $ & $\tS\in\tcS^2$  \\ 
   \hline
$|P^\perp \cap \cO|$: & $0$ & $q+1$ & $2(q+1)$ & $q^2-2q-1$ & $(q-2)(q+1)^2$ & $(q^3-q^2-3q-1)/2$ & $(q-1)(q+1)^2/2$ & $(q^2-q)(q+1)/2 $ & $(q^2-q)(q+1)/2$  \\ 
\hline
\end{tabular}

\end{proposition}
\end{comment}

\subsection{New Quasi-Hermitian surfaces}\label{sec: new quasi hermitian}

As a consequence of our investigation in Section \ref{section plane intersection numbers}, we can provide a new construction of quasi-Hermitian surfaces.
\begin{theorem}\label{quasi hermitian construction}If $S_1$ and $S_2$ are respectively in $\cS^1\cup\{\cH\setminus\cH_2\}$ and $\cS^2\cup\{\cH\setminus\cH_1\}$, then their join $S_1 \cup S_2$ is a quasi-Hermitian surface. If $S_1=S_{\xi_1}$ and $\hat{S}_1=S_{-\xi_1}$ and $S_2=S_{\xi_2}$ and $\hat{S}_2=S_{-\xi_2}$, then $S_1\cup S_2$ is isomorphic to $\hat{S_1}\cup\hat{S_2}$, in any other case, $S_1\cup S_2$ is not isomorphic to $\hat{S_1}\cup\hat{S_2}$.
\end{theorem}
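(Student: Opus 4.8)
The plan is to prove the quasi-Hermitian property by computing, for each orbit-representative point $P$, the intersection size $|P^\perp \cap (S_1\cup S_2)|$ and verifying it takes only the two values characteristic of a non-degenerate Hermitian surface $\cH$ of $\PG(3,q^2)$, namely $q^3+q^2+1$ (for a tangent plane) and $q^3+1$ (for a secant plane). Since $G_0$ acts transitively on each of the orbits listed in Theorem \ref{pointorbits}, and since $S_1\cup S_2$ is a union of orbit-sets (each $\tS_\xi$ is a $G_0$-orbit, and $\cH\setminus\cH_2=\cQ_0\sqcup\cT\sqcup\cH_1$, $\cH\setminus\cH_1=\cQ_0\sqcup\cT\sqcup\cH_2$ are unions of orbits), it suffices to check the count on one representative $P$ from each of the nine orbit types. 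For this I would simply read off the relevant column entries from the distribution tables in Propositions \ref{Q0tgplane pts distribution} through \ref{H1 pts distribution}, together with the analogue for $\tS\in\tcS^1$, and add up the contributions of the orbit-sets comprising $S_1$ and $S_2$. The main bookkeeping is that for each row one must use both $|P^\perp\cap\cQ_0|$, $|P^\perp\cap\cT|$ and the $\tcS^1$- or $\tcS^2$-column, being careful that every such surface-set in $\cS^i$ contributes the \emph{same} intersection number (this uniformity across $\xi$ is exactly what the tables record).

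The key computational step is to confirm that these sums collapse to exactly two values. I expect the natural split to be that $P^\perp$ is a ``tangent-type'' plane precisely when $P$ lies on the quasi-Hermitian surface $S_1\cup S_2$ itself (so that the larger count appears), and a ``secant-type'' plane otherwise; the $+q^2$ terms that appear in the $\cH_1$, $\cH_2$, and diagonal $\tS$ entries of the tables are precisely the discrepancies that realize the two distinct intersection sizes. Concretely, I would verify the identity for, say, $P\in\cQ_0$ using Proposition \ref{Q0tgplane pts distribution}, obtaining one value, and then for $P\in\Tilde{\cQ}$ using Proposition \ref{Q pts distribution}, obtaining the same value, and so on across all nine representatives, showing the sum is constant except for the controlled $+q^2$ shift on points of $S_1\cup S_2$. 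The arithmetic is routine but must be executed with care so that terms like $(q+1)(q^2-2q-1)/2$ and $((q+1)^2-2)(q-1)/2$ combine cleanly.

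For the isomorphism statement, the forward direction is immediate: by Proposition \ref{surfacesstabilized} the full group $G$ (of index two over $G_0$) preserves each pair $\tS_\xi\cup\tS_{-\xi}$, and by the remark after Theorem \ref{pointorbits} the element of $G\setminus G_0$ interchanges $\tS_\xi$ with $\tS_{-\xi}$. Hence there is a collineation of $\PG(3,q^2)$ stabilising $\Sigma$ (equivalently, fixing the relevant hyperplane $\Pi$ in the strongly-regular-graph construction) mapping $S_{\xi_1}\cup S_{\xi_2}$ to $S_{-\xi_1}\cup S_{-\xi_2}$, giving the claimed isomorphism via \cite[Theorem 3.5]{CRV}. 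The hard part will be the converse non-isomorphism claim. For this I would argue that a collineation realizing an isomorphism between two such joins must respect the $G$-orbit decomposition, and in particular must send the pair of ``$\tcS^1$-type'' building blocks of one surface to those of the other and likewise for the $\tcS^2$-type; the pairing $\{\xi,-\xi\}$ is then the unique $G$-invariant obstruction, so any isomorphism forces $\{\xi_1,\xi_2\}$ and the target indices to agree up to the sign-flip already accounted for. I expect to establish this by extracting from the intersection-number tables a collineation-invariant that distinguishes the unordered pair of $\xi$-values --- for instance the multiset of plane-intersection sizes restricted to each constituent orbit-set --- and showing that distinct pairs $\{\pm\xi_1,\pm\xi_2\}$ produce distinct such invariants. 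Verifying that no exceptional collineation can mix an $\tcS^1$-block with an $\tcS^2$-block (they are distinguished by singularity, hence by the $\Delta$-square condition of Proposition \ref{nonsingularity}, which is collineation-invariant) is the crux, and is where the bulk of the careful argument will lie.
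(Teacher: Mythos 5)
Your proposal takes essentially the same route as the paper: the quasi-Hermitian property is obtained by summing, for one representative $P$ of each $G_0$-orbit, the relevant column entries ($\cQ_0$, $\cT$, and the $\tcS^1$-, $\tcS^2$-, $\cH_1$-, $\cH_2$-columns) of the plane-distribution tables in Propositions \ref{Q0tgplane pts distribution}--\ref{H1 pts distribution} and checking that only the two values $q^3+1$ and $q^3+q^2+1$ occur, while the isomorphism $S_{\xi_1}\cup S_{\xi_2}\cong S_{-\xi_1}\cup S_{-\xi_2}$ comes from the index-two element of $G$ over $G_0$ interchanging $\tS_\xi$ and $\tS_{-\xi}$ (Proposition \ref{surfacesstabilized} and the remark after Theorem \ref{pointorbits}). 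The paper's own argument for the non-isomorphism claim is left unfinished in the source, so your sketch of a $G$-invariant separating distinct pairs $\{\pm\xi_1,\pm\xi_2\}$ is consistent with, and somewhat more explicit than, what the paper actually records.
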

\begin{proof}
Looking at the intersection numbers computed in Section \ref{section plane intersection numbers}, any plane intersects $\cS_1 \cup \cS_2$ in either $q^3+1$ or $q^3+q^2+1$ points. The last statement is a consequence of the fact that each such set is the join of orbits of the group $G_0$, and of Theorem \ref{pointorbits}.
\end{proof}
We also note that it is possible to obtain further quasi-Hermitian surfaces from the ones just constructed by applying the pivoting method on the tangent planes at points of $\cQ_0$. See \cite{SchillewaertVoorde2022} for further details on the pivoting method.
We finally address the isomorphism issue. To the best of our knowledge the only known constructions of quasi-Hermitian surfaces are those investigated in \cite{Pavese2015,LavrauwLiaPavese2023,Aguglia2013,AgugliaGiuzzi2022,AgugliaCossidenteKorchmaros2012,Napolitano2023}. For a quick overview of their properties, see \cite[Section 3]{LavrauwLiaPavese2023}. Since the group stabilizing the quasi-Hermitian surfaces constructed here is different from the one in \cite{LavrauwLiaPavese2023}, it is clear that the sets involved are not isomorphic. For the isomorphism issue with the other constructions, it is enough to look at the number of lines contained in the set passing through each point. See \cite[Section 3]{LavrauwLiaPavese2023} for a collection of the relevant results for the other constructions. A further construction, not mentioned there, takes $q^3+1$ pairwise skew lines, see \cite{Napolitano2023}. In each case, since points on $\cQ_0$ are incident with precisely $q+1$ lines contained in the quasi-Hermitian surface, it is immediate to see that the quasi-Hermitian surfaces are not isomorphic.
Further quasi-Hermitian surfaces can be obtained applying multiple times the pivoting construction of \cite{DS}. This has been investigated in details in \cite{SchillewaertVoorde2022}, see \cite[Theorem 5.2]{SchillewaertVoorde2022}. In particular, the authors showed that the pivoting method can be applied up to $q^2+1$ times on an Hermitian surface, pivoting at the tangent planes of the $q^2+1$ points of a given generator. 
Since the automorphism group of a quasi-Hermitian surface obtained from a Hermitian surface by pivoting is contained in the stabilizer of a plane (or of a line if it is multiple pivoting), and the group $G_0$ does not fix any plane, our construction can not be obtained by multiple pivoting from an Hermitian surface.
Summing up, we can state the following theorem.
\begin{theorem}
The quasi-Hermitian surfaces of Theorem \ref{quasi hermitian construction} are not isomorphic to any previously known construction.
\end{theorem}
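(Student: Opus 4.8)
The plan is to establish non-isomorphism with every previously known construction by exploiting the single cleanest invariant available to us, namely the number of fully-contained lines of the quasi-Hermitian surface passing through each of its points, and in particular through the points of the subquadric $\cQ_0$. First I would fix the quasi-Hermitian surface $\cS_1\cup\cS_2$ produced by Theorem \ref{quasi hermitian construction} and determine, for a point $P\in\cQ_0$, exactly how many lines of $\PG(3,q^2)$ lie entirely in $\cS_1\cup\cS_2$ and pass through $P$. By Proposition \ref{hermitian cone}, the pencil $\cL_P$ of lines through $P$ in the tangent plane $P^\perp$ splits into two $\cQ_0$-generators, $q-1$ $\cQ_0$-tangent $\Sigma$-lines, $q-1$ lines inside $\cH$, and, for each $S\in\cS^2$, a further $q-1$ lines contained in $S$; since $S_1$ absorbs the generators and the $\cH$-lines while $S_2$ supplies the lines inside its chosen member of $\cS^2$, one counts precisely $q+1$ fully-contained lines through each point of $\cQ_0$. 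This value $q+1$ is the invariant I would then compare against the known families.

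Next I would run through the known constructions one at a time, citing the collected line-counts in \cite[Section 3]{LavrauwLiaPavese2023}. For each of the families of \cite{Pavese2015}, \cite{LavrauwLiaPavese2023}, \cite{Aguglia2013}, \cite{AgugliaGiuzzi2022}, \cite{AgugliaCossidenteKorchmaros2012}, and the skew-line construction of \cite{Napolitano2023}, I would read off the number of fully-contained lines through a point and observe that no point of those surfaces carries exactly $q+1$ such lines in the same configuration; since an isomorphism of quasi-Hermitian surfaces is induced by a collineation of $\PG(3,q^2)$ (by \cite[Theorem 3.5]{CRV}, via the associated strongly regular graphs), it must preserve this local line-count, so a mismatch rules out isomorphism. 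For the construction obtained by iterating the pivoting method of \cite{DS}, analysed in \cite[Theorem 5.2]{SchillewaertVoorde2022}, I would use the structural fact recalled just above the statement: a pivoted Hermitian surface always contains a line $\ell$ such that every plane through $\ell$ meets the surface in $q+1$ fully-contained lines, whereas in $\cS_1\cup\cS_2$ any fully-contained line lies in a member of $\cS^2$ and hence must pass through a point of $\cQ_0$ in its tangent plane; I would show this forces an incompatible global configuration of contained lines, so the pivoted surfaces are excluded too.

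Finally, I would dispatch the surfaces of \cite{LavrauwLiaPavese2023} themselves by the group-theoretic remark made in the paragraph preceding the theorem: the full stabiliser of $\cS_1\cup\cS_2$ in $\PGammaL(4,q^2)$ differs from the stabiliser of the surfaces in \cite{LavrauwLiaPavese2023}, and since an isomorphism would conjugate one stabiliser onto the other, the two cannot be isomorphic. I expect the main obstacle to be the pivoting family: the line-count invariant alone does not obviously separate our surfaces from every iterate of the pivoting construction, so the argument there must instead compare the \emph{position} of the fully-contained lines relative to the embedded subgeometry $\Sigma$ and the subquadric $\cQ_0$, exploiting that in our construction every contained line meets $\cQ_0$ while the distinguished pivoting line $\ell$ and its plane pencil need not respect $\cQ_0$ at all. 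Making this positional distinction fully rigorous, rather than relying on a single numerical invariant, is the delicate point, and I would devote most of the care there.
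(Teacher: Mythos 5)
Your proposal follows essentially the same route as the paper: the invariant of $q+1$ fully-contained lines through each point of $\cQ_0$ (obtained from Proposition \ref{hermitian cone}) to rule out the constructions surveyed in \cite[Section 3]{LavrauwLiaPavese2023} and the skew-line construction of \cite{Napolitano2023}, the stabiliser comparison for the family of \cite{LavrauwLiaPavese2023} itself, and the positional argument about contained lines relative to $\cQ_0$ for the pivoting construction of \cite{DS} as analysed in \cite{SchillewaertVoorde2022}. You correctly identify the pivoting case as the delicate one --- the paper's own argument there is left visibly unfinished --- so your sketch is at essentially the same level of completeness as the source.
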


\section{Equations for the quasi-Hermitian varieties arising from a Baer subgeometry}\label{section delta equation}
As pointed out in Proposition \ref{delta properties} and right after, the (reducible) surface of degree $2(q+1)$ of equation $H^2-4Q^{q+1}=0$ constitute an instance of the quasi-Hermitian surface constructed in \cite{Pavese2015}.
In this section we will show that, up to a change of projective frame, the quasi-Hermitian varieties of \cite{Pavese2015}, in any dimension, have a similar equation.

Let $\Sigma\simeq \PG(N, q)$ be a Baer subgeometry of $\PG(N, q^2)$ fixed by an involution $\phi$ and let $\cQ_0$ be a non-degenerate quadric of $\Sigma$. Let $\cL$ be the set of lines of $\PG(N, q^2)$ having $q+1$ points in common with $\Sigma$ and intersecting $\cQ_0$ in either one or $q+1$ points. Then 
\begin{align}
&  \bigcup_{\ell \in \cL} \ell \label{h3}
\end{align}  
is a quasi-Hermitian variety \cite{Pavese2015}. 

Let $\cB$ be such a quasi-Hermitian variety, and let $b(-,-)$ the bilinear non-degenerate quadratic form of $\Sigma$ that induces the quadratic form $Q$ (with $Q(u)=b(u,u)/2$) defining $\cQ_0$. 
The bilinear form $b(-,-)$ extends to $\PG(N,q^2)$, defining a quadric $\cQ$. Moreover, $b$ induces an Hermitian form $h(u,v)=b(u,v^\phi)$ as well. 
 By construction, the isotropic points of the Hermitian form $h$ constitute a non-degenerate Hermitian variety $\cH$ of $\PG(N,q^2)$, which is in permutable position with respect to $\cQ$.

Without loss of generality, we can assume $\Sigma$ to be the canonical subgeometry, and $\phi$ the Frobenius automorphism.
Assume that $P_0$ is a point of $\cB$. 
Then $\cB$ is given by the points on $\Sigma$-lines meeting $\cQ_0$ in one or $q+1$ points. For a given point $P$ in $\PG(N,q^2)\setminus\Sigma$, the unique $\Sigma$-line containing $P$ is the line $\langle P,P^\phi\rangle$. A point $P$ of $\PG(N,q^2)$ lies on precisely one $\Sigma$-line tangent to $\cQ_0$ if and only if it lies on precisely one $\Sigma$-plane tangent to $\cQ_0$. Therefore, $P\in\cB$ if and only if $P\in P_0^\perp$ and the line $\langle P,P^\phi\rangle$ has either $1$ or $q+1$ points on $\cQ_0$.

We claim that this happens if and only if 
\begin{equation}\label{equation pavese}
H(P)^2-4Q(P)^{q+1}=0
\end{equation}

Indeed, this holds true if and only if the equation
\[
b(P,P)x^2+(b(P,P^\phi)x+b(P,P)^q=0
\]
has precisely one solution.
If $P$ belongs to $\Sigma$, then $P$ clearly belongs to $\cB$, and it satisfies Equation \eqref{equation pavese}. Otherwise, there exists a unique $P_0\in\cQ_0$ such that $P_0\in\langle P,P^\phi\rangle$. Therefore, $P_0=P+xP^q$, with $x\in\F_{q^2}$ 
\begin{align*}
b(P_0,P_0)=&b(P+yP^q,P+yP^q)=b(P,P)+b(P,yP^q)+b(yP^q,P)+b(yP^q,yP^q)\\
&=2Q(P)+2xH(P)+2x^2Q(P)^q.
\end{align*}
Observe that we used $b(P^q,P^q)=2Q(P)^q$.
It follows that $P\in \PG(N,q^2)$ belongs to $\cB$ if and only if the equation
\[
Q(P)+xH(P)+x^2Q(P)^q=0
\]
has either precisely one solution, namely Equation \eqref{equation pavese} is satisfied, or it has precisely $q+1$ solutions. The latter case can only happen if it is identically zero as a polynomial in $x$, which means that both $Q(P)=0$ and $H(P)=0$, namely the point $P$ lies in the intersection of the commuting quadric and Hermitian varieties. This is equivalent to say that $P$ is on a generator.
Summarizing, we obtain the following Lemma.
\begin{lemma}
For any $n\geq 3$ there exist a non degenerate quadric $\cQ: Q=0$ and a non degenerate Hermitian variety $\cH:H=0$ such that the quasi-Hermitian variety of \cite{Pavese2015} has equation
\[
H^2-4Q^{q+1}=0.
\]
\end{lemma}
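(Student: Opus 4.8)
The plan is to exhibit the two forms already singled out in the construction, namely the quadratic form $Q(u)=b(u,u)/2$ and the Hermitian form $H(u)=h(u,u)=b(u,u^\phi)$ attached to the non-degenerate bilinear form $b$ of $\Sigma$, and to prove that the single equation $H^2-4Q^{q+1}=0$ cuts out exactly the Pavese variety $\cB$. Since $\cB$ consists of the points lying on a $\Sigma$-line meeting $\cQ_0$ in one or $q+1$ points, the whole problem is to translate this incidence condition into a polynomial condition on $Q$ and $H$.

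First I would fix $P\in\PG(n,q^2)\setminus\Sigma$ and use that its unique $\Sigma$-line is $\langle P,P^\phi\rangle$. Parametrising its points as $\lambda P+\mu P^\phi$ and evaluating the bilinear form gives the binary quadratic
\[
b(\lambda P+\mu P^\phi,\lambda P+\mu P^\phi)=2\bigl(\lambda^2 Q(P)+\lambda\mu H(P)+\mu^2 Q(P)^q\bigr),
\]
where I have used $b(P,P^\phi)=H(P)$ and $b(P^\phi,P^\phi)=2Q(P)^q$. Its projective roots $(\lambda:\mu)$ are precisely the points of $\cQ$ on the line, and its discriminant is exactly $H(P)^2-4Q(P)^{q+1}$, so the entire statement hinges on counting these roots.

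The key point, and the step I expect to be the most delicate, is matching the root-count of this quadratic over $\Fqt$ with the number of points of $\cQ_0$ (not merely of the extended quadric $\cQ$) on the $\Sigma$-line. Here the crucial observation is that $H(P)^2$ and $Q(P)^{q+1}$ both lie in $\Fq$, so the discriminant lies in $\Fq$ and is therefore automatically a square in $\Fqt$ (for odd $q$ every element of $\Fq$ is a square in $\Fqt$). Hence the quadratic can never have zero roots: it has two distinct roots when the discriminant is nonzero, a single repeated root when it vanishes, and vanishes identically exactly when $Q(P)=H(P)=0$. Since both $\langle P,P^\phi\rangle$ and $\cQ$ are $\phi$-invariant, a single point of $\cQ$ on the line must be $\phi$-fixed and hence lie in $\Sigma$; this identifies the repeated-root case with tangent $\Sigma$-lines (one point of $\cQ_0$) and the identically-zero case with generators ($q+1$ points of $\cQ_0$). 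In every remaining configuration the discriminant is nonzero and the line is secant or external to $\cQ_0$, so $P\notin\cB$. Thus, for $P\notin\Sigma$, membership in $\cB$ is equivalent to $H(P)^2-4Q(P)^{q+1}=0$.

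Finally I would dispose of the case $P\in\Sigma$: every point of $\Sigma$ lies on a tangent line or a generator of $\cQ_0$, so $\Sigma\subseteq\cB$, and for such $P$ one has $P^\phi=P$ and $Q(P)\in\Fq$, whence $H(P)=2Q(P)$ and $H(P)^2-4Q(P)^{q+1}=4Q(P)^2-4Q(P)^2=0$. Combining the two cases shows that $\cB$ is exactly the variety $H^2-4Q^{q+1}=0$, proving the lemma for all $n\geq 3$. The main obstacle is the bookkeeping in the root-counting step, namely justifying via $\phi$-invariance that the discriminant condition genuinely detects incidence with the subquadric $\cQ_0$ rather than with the larger quadric $\cQ$.
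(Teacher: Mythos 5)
Your proposal is correct and follows essentially the same route as the paper: parametrise the unique $\Sigma$-line $\langle P,P^\phi\rangle$ through $P$, evaluate $b$ along it to obtain the binary quadratic $\lambda^2 Q(P)+\lambda\mu H(P)+\mu^2 Q(P)^q$, and identify membership in $\cB$ with the vanishing of its discriminant $H(P)^2-4Q(P)^{q+1}$. If anything, your explicit appeal to $\phi$-invariance (a repeated root is $\phi$-fixed and hence yields a point of $\cQ_0$ rather than merely of $\cQ$) and your observation that the discriminant lies in $\F_q$ and is therefore always a square in $\F_{q^2}$ make the root-counting step more watertight than the paper's own write-up, which leaves these points implicit.
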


\section{Concluding remarks}

In this paper we studied a new model for threefold tensors, leading to a variety of results on semifields and a construction of new quasi-Hermitian surfaces. We believe this model can lead to further interesting geometric results for larger dimensions.
\subsubsection*{Declarations of interest: none}

\bigskip

{\footnotesize
\noindent\textit{Acknowledgments.}
The first author acknowledges the support by the Irish Research Council, grant n. GOIPD/2022/307. 
}

\bibliographystyle{plain}

\bibliography{Bibliography}

\begin{thebibliography}{10}

\bibitem{Aguglia2013}
Angela Aguglia.
\newblock Quasi-{H}ermitian varieties in {PG}$(r,q^2)$, $q$ even.
\newblock {\em Contributions to Discrete Mathematics}, 8(1):31--37, 2013.

\bibitem{AgugliaCossidenteKorchmaros2012}
Angela Aguglia, Antonio Cossidente, and Gabor Korchmáros.
\newblock On quasi-{H}ermitian varieties.
\newblock {\em Journal of Combinatorial Designs}, 20(10):433--447, 2012.

\bibitem{AgugliaGiuzzi2022}
Angela Aguglia and Luca Giuzzi.
\newblock On the equivalence of certain quasi-{H}ermitian varieties.
\newblock {\em Journal of Combinatorial Designs}, pages 1--15, 2022.

\bibitem{Stavrou}
Murray~R. Bremner and Stavros~G. Stavrou.
\newblock Canonical forms of {$2\times 2\times 2$} and {$2\times 2\times
  2\times 2$} arrays over {$\Bbb{F}_2$} and {$\Bbb{F}_3$}.
\newblock {\em Linear Multilinear Algebra}, 61(7):986--997, 2013.

\bibitem{CafureMatera}
Antonio Cafure and Guillermo Matera.
\newblock Improved explicit estimates on the number of solutions of equations
  over a finite field.
\newblock {\em Finite Fields Appl.}, 12(2):155--185, 2006.

\bibitem{CK}
Robert Calderbank and William~M. Kantor.
\newblock The geometry of two-weight codes.
\newblock {\em Bullettin of the London Mathematical Society}, 18(2):97--122,
  1986.

\bibitem{CRV}
Philippe Cara, Sara Rottey, and Geertrui Van~de Voorde.
\newblock The isomorphism problem for linear representations and their graphs.
\newblock {\em Advances in Geometry}, 14(2):353--367, 2014.

\bibitem{Coolsaet2013}
Kris Coolsaet.
\newblock On the classification of nonsingular {$2\times 2\times 2\times 2$}
  hypercubes.
\newblock {\em Des. Codes Cryptogr.}, 68(1-3):179--194, 2013.

\bibitem{Coolsaet2}
Kris Coolsaet.
\newblock Nonsingular hypercubes and nonintersecting hyperboloids.
\newblock {\em Designs, Codes and Cryptography}, 2023.

\bibitem{DHOP}
Frank De~Clerck, Nicholas Hamilton, Christine O'Keefe, and Tim Penttila.
\newblock Quasi-quadrics and related structures.
\newblock {\em Australasian Journal of Combinatorics}, 22:151--166, 2000.

\bibitem{DS}
Stefaan De~Winter and Jeroen Schillewaert.
\newblock A note on quasi-{H}ermitian varieties and singular quasi-quadrics.
\newblock {\em Bulletin of the Belgian Mathematical Society Simon Stevin},
  17(5):911--918, 2010.

\bibitem{dickson}
Leonard~Eugene Dickson.
\newblock Linear algebras in which division is always uniquely possible.
\newblock {\em Transactions of the American Mathematical Society},
  7(3):370--390, 1906.

\bibitem{GowSheekey}
Rod Gow and John Sheekey.
\newblock On primitive elements in finite semifields.
\newblock {\em Finite Fields Appl.}, 17(2):194--204, 2011.

\bibitem{FPSO3D}
James W.~P. Hirschfeld.
\newblock {\em Finite projective spaces of three dimensions}.
\newblock The Clarendon Press, Oxford University Press, New York, 1985.

\bibitem{Lavrauw2013}
Michel Lavrauw.
\newblock Finite semifields and nonsingular tensors.
\newblock {\em Des. Codes Cryptogr.}, 68(1-3):205--227, 2013.

\bibitem{LavrauwLiaPavese2023}
Michel Lavrauw, Stefano Lia, and Francesco Pavese.
\newblock On the geometry of the hermitian veronese curve and its
  quasi-hermitian surfaces.
\newblock {\em Discrete Mathematics}, 346(10):113582, 2023.

\bibitem{LavrauwSheekey2014}
Michel Lavrauw and John Sheekey.
\newblock Aspects of tensor products over finite fields and galois geometries.
\newblock 2014.

\bibitem{LaShOrbits}
Michel Lavrauw and John Sheekey.
\newblock Orbits of the stabiliser group of the {S}egre variety product of
  three projective lines.
\newblock {\em Finite Fields Appl.}, 26:1--6, 2014.

\bibitem{LavrauwSheekey2017BEL}
Michel Lavrauw and John Sheekey.
\newblock The {BEL}-rank of finite semifields.
\newblock {\em Des. Codes Cryptogr.}, 84(3):345--358, 2017.

\bibitem{LavrauwSheekey2022}
Michel Lavrauw and John Sheekey.
\newblock The tensor rank of semifields of order 16 and 81.
\newblock {\em Linear Algebra Appl.}, 643:99--124, 2022.

\bibitem{Napolitano2023}
Vito Napolitano.
\newblock A new combinatorial characterization of (quasi)-{H}ermitian surfaces.
\newblock {\em J. Geom.}, 114(2):Paper No. 19, 8, 2023.

\bibitem{Pavese2015}
Francesco Pavese.
\newblock Geometric constructions of two-character sets.
\newblock {\em Discrete Mathematics}, 338(3):202--208, 2015.

\bibitem{SchillewaertVoorde2022}
Jeroen Schillewaert and Geertrui Van~de Voorde.
\newblock Quasi-polar spaces.
\newblock {\em Comb. Theory}, 2(3):Paper No. 5, 32, 2022.

\bibitem{Segre1965}
Beniamino Segre.
\newblock Forme e geometrie hermitiane, con particolare riguardo al caso
  finito.
\newblock {\em Ann. Mat. Pura Appl. (4)}, 70:1--201, 1965.

\bibitem{sheekeyThesis}
John Sheekey.
\newblock {\em On Rank Problems for Subspaces of Matrices over Finite Fields}.
\newblock PhD thesis, University College Dublin, 2012.

\bibitem{SheekeyMRD}
John Sheekey.
\newblock {\em 13. MRD codes: constructions and connections}, pages 255--286.
\newblock De Gruyter, Berlin, Boston, 2019.

\bibitem{SheekeyVandeVoordeVoloch2022}
John Sheekey, Geertrui Van~de Voorde, and Jos\'{e}~Felipe Voloch.
\newblock On the product of elements with prescribed trace.
\newblock {\em J. Aust. Math. Soc.}, 112(2):264--288, 2022.

\bibitem{ZiniZullo2021}
Giovanni Zini and Ferdinando Zullo.
\newblock On the intersection problem for linear sets in the projective line.
\newblock {\em Discrete Math.}, 344(6):Paper No. 112359, 15, 2021.

\end{thebibliography}

\section*{Appendix: Proof of the last part of Proposition \ref{S1 partial pts distribution}}\label{subsection end of proof}

The idea is to count, instead of the number of points of $\tS$ in $P^\perp$, with $P\in\tS\in\tcS^1$, the number of automorphisms of $G$ mapping $P$ in a point of $P^\perp$. This will lead us to solve a system involving the base locus of a pencil of Hermitian surfaces.

Without loss of generality, let $P\in\tS$ be of the form $(1,0,0,\xi)$, for some $\xi\in\F_{q^2}\setminus\F_q$. Observe that this is allowed because all the points of $\tS$ are on $\Sigma$-lines external to $\cQ_0$, and we can chose the line $\ell=(1,0,0,0)(0,0,0,1)$.
Therefore, $P^\perp$ has equation $x=\xi^qt$. We will compute the number $N$ of automorphisms of $G_0$ mapping $(1,0,0,\xi)$ into a point of $P^\perp$. Since the stabiliser $G_P$ of $P$ has size $q+1$, this number $N$ is also $(q+1)|P^\perp \cap \tS|$. 
$N$ is given by the number of solutions of the following equation:

\begin{align}
&xz^q+yt^q\xi-\xi_0^q(y^qt+x^qz\xi_i)=0;\label{baselocus}
\end{align}
under the further assumption that $x^{q+1}-y^{q+1}\neq 0$ and $z^{q+1}-t^{q+1}\neq 0$.
By looking at Equation \eqref{baselocus} as the base locus of a pencil of Hermitian surfaces, we obtain that $N$ is the number of solutions of the following system: 
\begin{align*}
    & H_1 : xz^q(1+\xi^{q+1})+x^qz(1+\xi^{q+1})+2yt^q\xi+2y^qt\xi^q=0; \\
    & H_2: (1-\xi^{q+1})(xz^q-x^qz)\theta=0; \\
    & H_3: x^{q+1}-y^{q+1}\neq 0; \\
    &H_4:  z^{q+1}-t^{q+1}\neq 0.
\end{align*}
Here $\theta$ is an element of $\Fqt$ with $\theta^q=-\theta$. We incorporate it in order to make $H_2$ a Hermitian equation.

To compute $N$ we will study the space of Hermitian matrices associated to these equations, obtaining $N$ via the inclusion-exclusion principle and the following result.
\begin{theorem}[Theorem 3.6, \cite{sheekeyThesis}]\label{sheekeythesis}
Let $U$ be a $d$-dimensional subspace of quadratic hermitian forms on $W$, where $W$ be a vector space $n$-dimensional over $\F_{q^2}$ . Let $A_r$ denote the number of elements of $U$ of rank $r$. Denote by $N_U$ the number of common zeros of the hermitian forms. Then
%\[ N_U (0) = \sum^n_{r=0}(-1)^rA_rq^{2n-d-r}.\]
\[ N_U (0) =q^n+ \sum^{n-1}_{r=0}
(-1)^rA_r(q^{n-r}-(-1)^r).
\]
\end{theorem}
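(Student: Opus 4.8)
The plan is to prove this through a character-sum (Gauss-sum) computation, reducing the count of common zeros to a sum over $U$ of the number of zeros of a single Hermitian form, a quantity that depends only on the rank of that form. First I would fix a nontrivial additive character $\psi$ of $\Fq$ and record that a quadratic Hermitian form $h(w)=h(w,w)$ satisfies $h(w)=\overline{h(w)}$, hence takes values in $\Fq$. The key structural observation is that for each fixed $w\in W$ the evaluation map $U\to\Fq$, $h\mapsto h(w)$, is $\Fq$-linear, so its image is either $\{0\}$ or all of $\Fq$. Orthogonality of characters then gives the indicator identity: $\frac{1}{|U|}\sum_{h\in U}\psi(h(w))$ equals $1$ precisely when $w$ is a common zero of every form in $U$, and $0$ otherwise. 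Summing over $w$ and exchanging the order of summation expresses the number of common zeros as $\frac{1}{|U|}\sum_{h\in U}S(h)$, where $S(h)=\sum_{w\in W}\psi(h(w))$.

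The technical heart is the evaluation of $S(h)$ for a Hermitian form of rank $r$. Since every Hermitian form diagonalizes over $\Fqt$, $S(h)$ factors as a product over the diagonal entries times the contribution $q^{2(n-r)}$ of the radical, on which $h$ vanishes identically. The one-variable base case $\sum_{x\in\Fqt}\psi(ax^{q+1})=-q$ for $a\in\Fq^\times$, which follows because the norm map $x\mapsto x^{q+1}$ is $(q+1)$-to-one onto $\Fq^\times$ together with $\sum_{t\in\Fq^\times}\psi(at)=-1$, then shows that $S(h)$ depends only on $r$ and not on the particular form. I would next group the sum over $U$ by rank, replacing $\sum_{h\in U}$ by $\sum_r A_r(\cdots)$, which is exactly the point at which the invariants $A_r$ enter the picture.

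The final step is to translate this rank-weighted sum into the stated closed form. Here I would use the relation $\sum_r A_r=|U|$ to absorb the normalizing factor $1/|U|$, isolate the contribution of the zero form (the rank-$0$ term) as the leading constant $q^n$, and collapse the remaining geometric-series expressions into the coefficients $q^{n-r}-(-1)^r$, with the passage between the count of zero vectors and the normalized count $N_U(0)$ accounting for the remaining powers of $q$. I expect the main obstacle to be precisely this accounting: keeping the signs, the powers of $q$, and the normalization mutually consistent so that the terms assemble into exactly $(-1)^rA_r\bigl(q^{n-r}-(-1)^r\bigr)$, rather than any portion of the argument being conceptually difficult. The two facts that make the whole scheme work, namely the rank-invariance of $S(h)$ and the linear-algebra dichotomy for the evaluation map $h\mapsto h(w)$, are both robust, and everything else is careful bookkeeping.
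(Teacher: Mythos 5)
Your approach is the standard character-sum proof of this kind of statement, and since the paper itself gives no proof (Theorem \ref{sheekeythesis} is quoted verbatim from Sheekey's thesis), that is the right benchmark. Steps one through five of your outline are all correct and complete as sketched: Hermitian forms are $\F_q$-valued, $U$ is an $\F_q$-space so $h\mapsto h(w)$ is $\F_q$-linear with image $\{0\}$ or $\F_q$, orthogonality gives the indicator, diagonalisation plus the base case $\sum_{x\in\F_{q^2}}\psi(ax^{q+1})=1+(q+1)\sum_{t\in\F_q^\times}\psi(at)=-q$ yields $S(h)=(-1)^r q^{2n-r}$ for a form of rank $r$. Carried to completion, with $|U|=q^d$ and $A_0=1$ counting the zero form, your argument gives
\[
N_U(0)=\frac{1}{q^d}\sum_{r=0}^{n}(-1)^r A_r\, q^{2n-r}=q^{2n-d}+\sum_{r=1}^{n}(-1)^r A_r\, q^{2n-d-r}.
\]

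The genuine gap is the final step you deferred as ``careful bookkeeping'': it cannot be carried out, because the displayed identity is not equivalent to what the character sum yields. The printed formula contains no dependence on $d$ at all, and it fails elementary sanity checks: for $U=\{0\}$ it gives $q^n+(q^n-1)=2q^n-1$ while the true count is $q^{2n}$; for $n=d=1$ and $U=\langle x^{q+1}\rangle$ it gives $2q-1$ while the true count is $1$. The statement as transcribed in this paper is garbled (presumably in quoting the thesis), and the formula you actually derived is the one the paper applies in its Appendix: there the counts are computed as $\sum_r(-1)^rA_r\,q^{8-d-r}$, i.e.\ exponents $2n-d-r$ with $2n=8$ and alternating signs --- note the negative rank-$3$ term in $N_{\langle H_1,H_2,H_3\rangle}$ --- which matches your conclusion exactly (e.g.\ $q^{8-2}+(q-1)q^{8-2-2}+q(q-1)q^{8-2-4}=q^6+q^5-q^3$, as the paper states for $\langle H_1,H_2\rangle$). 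So your instinct that the sign-and-exponent accounting was the main obstacle was right, but for a reason you could not have anticipated: the obstacle lies in the target formula, not in your method. The fix is simply to prove, and use, the $q^{2n-d-r}$ version above rather than forcing the coefficients $q^{n-r}-(-1)^r$ of the misprinted statement.
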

Let $A=(1+\xi^{q+1})$ and $B=(1-\xi^{q+1})\theta$. Note that, by the condition $P\in\tS$, $A,B\neq 0$ follows. The matrices associated to Hermitian forms $H_i$ are therefore the following:
\[
H_1:=\begin{pmatrix}
0 & 0 & A &  0 \\
0 & 0 & 0 & 2\xi \\
A & 0 & 0 & 0 \\
0 & 2\xi^q & 0 & 0 \\
\end{pmatrix};\quad
H_2:=\begin{pmatrix}
0 & 0 & B &  0 \\
0 & 0 & 0 & 0 \\
B^q & 0 & 0 & 0 \\
0 & 0 & 0 & 0 \\
\end{pmatrix};
\]
\[
H_3:=\begin{pmatrix}
0 & 0 & 0 &  0 \\
0 & 0 & 0 & 0 \\
0 & 0 & 1 & 0  \\
0 & 0 & 0 & -1 \\
\end{pmatrix};\quad
H_4:=\begin{pmatrix}
1 & 0 & 0 &  0 \\
0 & -1 & 0 & 0 \\
0 & 0 & 0 & 0 \\
0 & 0 & 0 & 0 \\
\end{pmatrix}.
\]
We apply now Theorem \ref{sheekeythesis}.
\begin{itemize}
\item In the subspace $\langle H_1,H_2\rangle$ there are precisely $q-1$ matrices of rank $2$ and $q(q-1)$ matrices of rank $4$.
Therefore,
\begin{align*}
    N_{\langle H_1,H_2\rangle}&=q^{8-2-0}+(q-1)q^{8-2-2}+q(q-1)q^{8-2-4}\\
    &=1+(q^2-1)(q^4+q^3+q^2+1)
\end{align*}.
\item In the subspace $\langle H_1,H_2,H_3\rangle$ there are precisely $2(q-1)$ matrices of rank $2$, $(q-1)^2$ matrices of rank $3$ and $q(q-1)^2+q(q-1)$ matrices of rank $4$. Therefore,
\begin{align*}
N_{\langle H_1,H_2,H_3\rangle}&=q^{8-3-0}+2(q-1)q^{8-3-2}-(q-1)^2q^{8-3-3}+q^2(q-1)q^{8-3-4}\\
&=1+(q^2-1)(q^3+2q^2+1)
\end{align*}.
\item In the subspace $\langle H_1,H_2,H_4\rangle$ there are precisely $2(q-1)$ matrices of rank $2$, $(q-1)^2$ matrices of rank $3$ and $q(q-1)^2+q(q-1)$ matrices of rank $4$. Therefore,
\begin{align*}
N_{\langle H_1,H_2,H_3\rangle}&=q^{8-3-0}+2(q-1)q^{8-3-2}-(q-1)^2q^{8-3-3}+q^2(q-1)q^{8-3-4}\\
&=1+(q^2-1)(q^3+2q^2+1).
\end{align*}
\end{itemize}
The subspace $\langle H_1,H_2,H_3,H_4\rangle$ needs a more detailed analysis.
\begin{itemize}
\item By the previous points, if one of $\gamma$ or $\delta$ is zero, there are precisely $3(q-1)$ matrices of rank $2$, $2(q-1)^2$ matrices of rank $3$ and $2q(q-1)^2+q(q-1)$ matrices of rank $4$.

\item If $\alpha,\gamma,\delta\neq0$ and $\beta=0$, $rank(\alpha H_1+\gamma H_3 +\delta H_4)=4$, unless $(\alpha,\gamma,\delta)=(\alpha,\gamma,\alpha^2 A^2/\gamma)$ or $(\alpha,\gamma,\delta)=(\alpha,\gamma,\alpha^2 \xi^{q+1}/\gamma)$. In the latter cases, the rank is at most $3$, and it is two if and only if $\alpha^2 A^2=\alpha^2 \xi^{q+1}$, which is not possible. Therefore, we have a total of $2(q-1)^2$ of such matrices of rank $3$ and $(q-1)^2(q-3)$ of rank $4$.
\item If $\alpha=\beta=0$ and $\gamma,\delta\neq 0$, then $rank(\beta H_2+\gamma H_3+ \delta H_4)=4$. This provides $(q-1)^2$ matrices.

\item If $\beta,\gamma,\delta\neq0$ and $\alpha=0$, then $rank(\beta H_2+\gamma H_3+ \delta H_4)=4$, unless $(\beta,\gamma,\delta)=(\beta,\gamma,\beta^2 B^{q+1}/\gamma)$. In the latter cases, the rank is precisely $3$. Therefore, we have a total of $(q-1)^2$ of such matrices of rank $3$ and $(q-1)^2(q-2)$ of rank $4$.

\item If $\alpha,\beta,\gamma,\delta\neq0$, then $rank(\alpha H_1+\beta H_2+\gamma H_3 +\delta H_4)=4$, unless $(\alpha,\beta,\gamma,\delta)=(\alpha,\beta,\gamma,(\alpha A+\beta B)(\alpha A+\beta B^q)/\gamma)$, or $(\alpha,\beta,\gamma,\delta)=(\alpha,\beta,\gamma,4\alpha^2\xi^{q+1}/\gamma)$.
In the latter cases, the rank is at most $3$, and it is two if and only if 
\[4\alpha^2 \xi^{q+1}=(\alpha A+\beta B)(\alpha A+\beta B^q).\] 
For any fixed value of $\beta\in\F_q^\times$, there are precisely zero solutions to this equation. As a consequence, we count $2(q-1)^3$ matrices of rank $3$ and $(q-1)^3(q-3)$ of rank $4$.

\end{itemize}
Summing up, we can now compute the following value:
\begin{align*}
&N_{\langle H_1,H_2,H_3,H_4\rangle}(0)=q^{8-4-0}+(3(q-1)q^{8-4-2}-(5(q-1)^2+2(q-1)^3)q^{8-4-3}+\\
&(2q(q-1)^2+q(q-1)+(q-1)^2(q-3)+(q-1)^2+(q-1)^2(q-2)+(q-1)^3(q-2))q^{8-4-4};
\end{align*}
namely
\[
N_{\langle H_1,H_2,H_3,H_4\rangle}(0)=1+2(q^2-1)(q+1).
\]
By the inclusion-exclusion principle, the number $N$ is given by
\[
N_{\langle H_1,H_2,H_3,H_4\rangle}(0)-N_{\langle H_1,H_2,H_3\rangle}(0)-N_{\langle H_1,H_2,H_4\rangle}(0)+N_{\langle H_1,H_2\rangle}(0);
\]
namely by
\[
(q^2-1)(q^3-3q-1).
\]

\end{document}